\documentclass{article} 
\usepackage{nips15submit_e,times}
\usepackage{hyperref}
\usepackage{url}
\usepackage{amssymb,amsmath,amsthm}
\usepackage{mathtools}
\usepackage{pifont}
\usepackage{algorithmic,algorithm}
\usepackage{multirow}

\newcommand{\vsb}{\vspace*{-0.1cm}}

\title{A Universal Catalyst for First-Order Optimization}

\author{Hongzhou Lin$^1$,~~ Julien Mairal$^1$ ~~and~~ Zaid Harchaoui$^{1,2}$\\
$^1$Inria ~~~~~~~$^2$NYU \\
\texttt{\{hongzhou.lin,julien.mairal\}@inria.fr} \\ \texttt{zaid.harchaoui@nyu.edu} 
}

\nipsfinalcopy 

 \DeclareMathOperator*{\argmin}{arg\,min}
 
 \newcommand{\reg}{\psi}
 \newcommand{\mtd}{\mathcal{M}}
 \newcommand{\C}{\mathcal{C}}
 \newcommand{\R}{\mathbb{R}}
 
 \newcommand{\E}{\mathbb{E}}
 \newcommand{\p}{\mathbb{P}}
 \newcommand{\surr}{d}
 \newcommand{\Surr}{D}
 \newcommand{\barz}{\bar{z}}
 \renewcommand\leqslant\leq
 \renewcommand\geqslant\geq
 \renewcommand\epsilon\varepsilon
 \renewcommand\ln\log
 \renewcommand\star{*}
\def\defin{\triangleq}
\def\Real{{\mathbb R}}
 \theoremstyle{plain}\newtheorem{thm}{Theorem}[section]
 \theoremstyle{definition}\newtheorem{defi}[thm]{Definition}
 \theoremstyle{plain}\newtheorem{prop}[thm]{Proposition}
 \theoremstyle{plain}\newtheorem{lemma}[thm]{Lemma}
 \theoremstyle{definition}
 \theoremstyle{plain}
 \theoremstyle{plain}
 
 \newcommand{\prox}{\mathrm{prox}}
%
%
%

 
 \newcommand{\Acal}{\mathcal{A}}
 \newcommand{\boots}{catalyst~}

 \newcommand{\Otilde}{\tilde{O}}
 
\usepackage{pifont}
 \newcommand{\vs}{\vspace*{-0.1cm}}

\begin{document}
\maketitle

\vspace*{-0.25cm}
\begin{abstract}
   We introduce a generic scheme for accelerating first-order optimization methods
in the sense of Nesterov, which builds upon a new analysis of the
accelerated proximal point algorithm. Our approach consists of minimizing a
convex objective by approximately solving a sequence of well-chosen auxiliary
problems, leading to faster convergence. This strategy applies to a large class
of algorithms, including gradient descent, block coordinate descent, SAG, SAGA,
SDCA, SVRG, Finito/MISO, and their proximal variants.  For all of these
methods, we provide acceleration and explicit support for non-strongly
convex objectives. In addition to theoretical speed-up, we also show that
acceleration is useful in practice, especially for ill-conditioned problems
where we measure significant improvements.

\end{abstract}

\section{Introduction}\label{sec:intro}
A large number of machine learning and signal processing problems are
formulated as the minimization of a composite objective
function~$F: \Real^p \to \Real$:
\begin{equation}
   \min_{x \in \Real^p} \left\{ F(x) \defin f(x) + \reg(x) \right\}, \label{eq:obj}
\end{equation}
where $f$ is convex and has Lipschitz continuous derivatives with constant~$L$
and~$\reg$ is convex but may not be differentiable.
The variable $x$ represents model parameters 
and the role of~$f$ is to ensure that the estimated parameters fit some observed data.
Specifically, $f$ is often a large sum of functions
\begin{equation}
   f(x) \defin \frac{1}{n} \sum_{i=1}^{n} f_i(x), \label{eq:obj2}
\end{equation}
and each term $f_i(x)$ measures the fit between~$x$ and a data point indexed
by~$i$. The function~$\reg$ in~(\ref{eq:obj}) acts as a regularizer; it is
typically chosen to be the squared~$\ell_2$-norm, which is smooth, or to be a
non-differentiable penalty such as the~$\ell_1$-norm or another
sparsity-inducing norm~\cite{bach2012optimization}. Composite minimization 
also encompasses constrained minimization if we consider
extended-valued indicator functions~$\psi$ that may take the value $+\infty$
outside of a convex set~$\C$ and~$0$ inside (see \cite{hiriart1996convex}).

Our goal is to accelerate gradient-based or \emph{first-order} methods that are designed
to solve~(\ref{eq:obj}), with a particular focus on large sums of
functions~(\ref{eq:obj2}). By ``accelerating'', we mean generalizing a
mechanism invented by Nesterov~\cite{nesterov} that improves the
convergence rate of the gradient descent algorithm. More precisely, when
$\reg=0$, gradient descent steps produce iterates~$(x_k)_{k \geq 0}$ such that
$F(x_k) - F^\star = O(1/k)$, where $F^\star$ denotes the minimum value of~$F$.
Furthermore, when the objective~$F$ is strongly convex with
constant $\mu$, the rate of convergence becomes linear in $O( (1- {\mu}/{L})^k
)$.  These rates were shown by Nesterov~\cite{nesterov1983} to be suboptimal
for the class of first-order methods, and instead optimal rates---$O(1/k^2)$
for the convex case and $O( (1- \sqrt{{\mu}/{L}})^k )$ for the
$\mu$-strongly convex one---could be obtained by taking gradient steps at
well-chosen points.  Later, this acceleration technique was extended to deal
with non-differentiable regularization
functions~$\psi$~\cite{fista,nesterov2013gradient}.

For modern machine learning problems involving a large sum of~$n$ functions, a
recent effort has been devoted to developing fast \emph{incremental}
algorithms~\cite{saga,finito,miso,sag,sdca,proxsvrg} that can exploit the
particular structure of~(\ref{eq:obj2}). Unlike full gradient approaches which
require computing and averaging 
$n$ gradients $\nabla f(x) = (1/n)
\sum_{i=1}^n\nabla f_i(x)$ at every iteration, incremental techniques have a cost per-iteration that is independent of~$n$. 
The price to pay is the need to store a moderate amount of information regarding past iterates, but the
benefit is significant in terms of computational complexity. 

\vs
\paragraph{Main contributions.} 
Our main achievement is a \emph{generic acceleration scheme} that applies to a large
class of optimization methods.  By analogy with substances that increase
chemical reaction rates, we call our approach a ``catalyst''.
A method may be accelerated if it has
linear convergence rate for strongly convex problems. This is
the case for
full gradient \cite{fista,nesterov2013gradient} and
block coordinate descent methods~\cite{nesterov2012,richtarik2014}, which already
have well-known accelerated variants. More importantly, it also applies to 
incremental algorithms such as SAG~\cite{sag},
SAGA~\cite{saga}, Finito/MISO~\cite{finito,miso}, SDCA~\cite{sdca}, and
SVRG~\cite{proxsvrg}. Whether or not these methods could be accelerated
was an important open question. It was only known to be the case for dual
coordinate ascent approaches such as SDCA~\cite{accsdca} or
SDPC~\cite{zhangxiao} for strongly convex objectives. Our work provides a
universal positive answer regardless of the strong convexity
of the objective, which brings us to our second achievement.

Some approaches such as Finito/MISO, SDCA, or SVRG are only defined
for strongly convex objectives. A classical trick to apply them to general
convex functions is to add a small regularization
$\varepsilon \|x\|^2$~\cite{sdca}. The drawback of this strategy is that it requires choosing in advance the parameter~$\varepsilon$, which is related to
the target accuracy.  A consequence of our work is to
automatically provide a \emph{direct support for non-strongly convex objectives}, thus
removing the need of selecting~$\varepsilon$ beforehand.

\vs
\paragraph{Other contribution: Proximal MISO.} The approach Finito/MISO, which was
proposed in~\cite{finito} and~\cite{miso}, is an incremental
technique for solving smooth unconstrained $\mu$-strongly convex problems when $n$ is larger than a constant $\beta
L/\mu$ (with $\beta=2$ in~\cite{miso}). In addition to providing acceleration
and support for non-strongly convex objectives, we also make the following specific contributions: \\
\hspace*{0.2cm}$\bullet$~we extend the method
and its convergence proof to deal with the composite problem~(\ref{eq:obj});\\
\hspace*{0.2cm}$\bullet$~we fix the method to remove the ``big data condition'' $n \geq \beta
L/\mu$.\\
The resulting algorithm can be interpreted as a variant of proximal SDCA~\cite{sdca}
with a different step size and a more practical optimality certificate---that is, checking the optimality condition does not require
evaluating a dual objective. Our construction is indeed purely \emph{primal}. 
Neither our proof of convergence nor the algorithm use duality, while SDCA is originally a dual ascent technique.

\vs
\paragraph{Related work.}
The catalyst acceleration can be interpreted as a variant of the proximal point
algorithm~\cite{Bauschke:2011,guler:1992}, which is a central concept in convex optimization,
underlying augmented Lagrangian approaches, and composite minimization schemes~\cite{bertsekas:2015,Parikh13}.
The proximal point algorithm consists of solving~(\ref{eq:obj}) by minimizing a
sequence of auxiliary problems involving a quadratic regularization term. In general,
these auxiliary problems cannot be solved
with perfect accuracy, and several notations of inexactness were proposed,
including~\cite{guler:1992,he2012accelerated,salzo2012inexact}.
The \boots approach hinges upon (i) an acceleration technique for the proximal
point algorithm originally introduced in the pioneer work~\cite{guler:1992}; (ii)
a more practical inexactness criterion than those proposed in the
past.\footnote{Note that our inexact criterion was also studied, among others,
in~\cite{salzo2012inexact}, but the analysis of~\cite{salzo2012inexact} led to
the conjecture that this criterion was too weak to warrant acceleration. Our
analysis refutes this conjecture.}
As a result, we are able to control the rate of convergence for approximately solving the
auxiliary problems with an optimization method~$\mtd$. In turn, we are also able to obtain
the computational complexity of the global procedure for
solving~(\ref{eq:obj}), which was not possible with previous
analysis~\cite{guler:1992,he2012accelerated,salzo2012inexact}.
When instantiated in different first-order optimization settings, our 
analysis yields systematic acceleration.

Beyond~\cite{guler:1992}, several works have inspired this paper. In particular,
accelerated SDCA~\cite{accsdca} is an instance of an inexact accelerated
proximal point algorithm, even though this was not explicitly stated
in~\cite{accsdca}. Their proof of convergence relies on different tools than
ours. Specifically, we use the concept of \emph{estimate sequence} from
Nesterov~\cite{nesterov}, whereas the direct proof of~\cite{accsdca}, in the
context of SDCA, does not extend to non-strongly convex objectives.
Nevertheless, part of their analysis proves to be helpful to obtain our main
results.
Another useful methodological contribution was the convergence analysis of inexact proximal
gradient methods of~\cite{proxinexact}.
Finally, similar ideas appear in the independent work~\cite{frostig}. Their results overlap 
in part with ours, but both papers adopt different directions. Our analysis is for
instance more general and provides  support for non-strongly convex
objectives. Another independent work with related results is~\cite{conjugategradient}, which introduce 
an accelerated method for the minimization of finite sums, which is not based on the proximal point algorithm.

\section{The Catalyst Acceleration}\label{sec:algorithm}
We present here our generic acceleration
scheme, which can operate on any first-order or gradient-based
optimization algorithm with linear convergence rate for strongly convex
objectives. 

\vs
\paragraph{Linear convergence and acceleration.}
Consider the problem~(\ref{eq:obj}) with a $\mu$-strongly convex function~$F$, 
where the strong convexity is defined with respect to the $\ell_2$-norm.
A minimization algorithm~$\mtd$, generating the sequence of iterates $(x_k)_{k
\geq 0}$, has a \textit{linear convergence rate} if there exists $\tau_{\mtd,
F}$ in $(0,1)$ and a constant $C_{\mtd,F}$ in $\R$ such that
\begin{equation}\label{mtd}
   F(x_k) - F^* \leq  C_{\mtd,F}(1-\tau_{\mtd, F})^k, 
\end{equation}
where $F^*$ denotes the minimum value of $F$. 
The quantity~$\tau_{\mtd,F}$ controls the convergence rate: the larger is $\tau_{\mtd, F}$, the
faster is convergence to $F^*$. However, for a given algorithm $\mtd$, the quantity
$\tau_{\mtd, F}$ depends usually on the ratio~$L/\mu$, which is often called
the \emph{condition number} of~$F$.

The \boots acceleration is a general approach that allows to wrap algorithm
$\mtd$ into an accelerated algorithm $\Acal$, which enjoys a faster linear
convergence rate, with $\tau_{\Acal,F} \geq \tau_{\mtd, F}$. 
As we will also see, the \boots acceleration may also be useful when~$F$ is not
strongly convex---that is, when~$\mu=0$. In that case, we may even consider a
method~$\mtd$ that requires strong convexity to operate, and obtain an
accelerated algorithm~$\Acal$ that can minimize~$F$ with near-optimal
convergence rate~$\Otilde(1/k^2)$.\footnote{In this paper, we use the notation
$O(.)$  to hide constants. The notation~$\Otilde(.)$ also hides logarithmic
factors.}

Our approach can accelerate a wide range of first-order optimization
algorithms, starting from classical gradient descent. It also applies to
randomized algorithms such as SAG, SAGA, SDCA, SVRG and Finito/MISO,
whose rates of convergence are
given in expectation.
Such methods should be contrasted with stochastic gradient methods~\cite{nemirovski,juditsky_nemirovsky_2012}, which minimize a different non-deterministic
function. Acceleration of stochastic gradient methods is beyond the
scope of this work. 

\vs
\paragraph{Catalyst action.}
We now highlight the mechanics of the catalyst algorithm, which is presented in
Algorithm~\ref{alg:catalyst}.  It consists of replacing, at iteration $k$, the original objective function $F$ by an auxiliary
objective $G_k$, close to $F$ up to a quadratic term: 
\begin{equation}
   G_k(x) \defin F(x) + \frac{\kappa}{2} \Vert x- y_{k-1} \Vert^2, \label{eq:gk}
\end{equation}
where $\kappa$ will be specified later and~$y_k$ is obtained by an
extrapolation step described in~(\ref{y_k}).  Then, at iteration $k$, the
accelerated algorithm $\Acal$ minimizes $G_k$ up to accuracy $\varepsilon_k$. 

Substituting~(\ref{eq:gk}) to~(\ref{eq:obj}) has two consequences. 
On the one hand, minimizing~(\ref{eq:gk}) only provides an approximation of the solution of~(\ref{eq:obj}),
unless~$\kappa=0$; on the other hand, the auxiliary objective~$G_k$ enjoys a
better condition number than the original objective~$F$, which makes it easier
to minimize. 
For instance, when $\mtd$ is the regular gradient descent algorithm with~$\reg=0$,
$\mtd$ has the rate of convergence~(\ref{mtd}) for minimizing $F$ 
with~$\tau_{\mtd,F} = \mu/L$.
However, owing to the additional quadratic term, 
$G_k$ can be minimized by~$\mtd$ with the rate~(\ref{mtd}) where
$\tau_{\mtd,G_k} = (\mu+\kappa)/(L+\kappa) > \tau_{\mtd,F}$.
In practice, there exists an ``optimal'' choice for~$\kappa$, which controls
the time required by~$\mtd$ for solving the auxiliary
problems~(\ref{eq:gk}), and the quality of approximation of~$F$ by the functions~$G_k$.
This choice will be driven by the convergence analysis in Sec.~\ref{subsec:strongly}-\ref{convergence2};
see also Sec.~\ref{sec:accel-factor-calculs} for special cases.

\vs
\paragraph{Acceleration via extrapolation and inexact minimization.}
Similar to the classical gradient descent scheme of
Nesterov~\cite{nesterov}, Algorithm~\ref{alg:catalyst} involves an
extrapolation step~(\ref{y_k}). As a consequence,
the solution of the auxiliary problem~(\ref{eq:approx}) at iteration~$k+1$ is
driven towards the extrapolated variable~$y_k$.
As shown in~\cite{guler:1992}, this step is in fact sufficient to reduce the
number of iterations of Algorithm~\ref{alg:catalyst} to solve~(\ref{eq:obj})
when~$\varepsilon_k=0$---that is, for running the \emph{exact} accelerated
proximal point algorithm.

Nevertheless, to control the total computational complexity of an accelerated
algorithm~$\Acal$, it is necessary to take into account the complexity of
solving the auxiliary problems~(\ref{eq:approx}) using~$\mtd$. This is where
our approach differs from the classical proximal point algorithm
of~\cite{guler:1992}. Essentially, both algorithms are the same, but we use the
weaker inexactness criterion~$G_k(x_k)-G_k^\star \leq \varepsilon_k$, where
the sequence $(\varepsilon_k)_{k \geq 0}$ is fixed beforehand, and only depends on the initial point.
This subtle difference has important consequences: (i) in practice, this
condition can often be checked by computing duality gaps; (ii) in theory,
the methods~$\mtd$ we consider have linear convergence rates, which allows
us to control the complexity of step~(\ref{eq:approx}), and
then to provide the computational complexity of~$\Acal$.

\begin{algorithm}[hbtp]
   \caption{Catalyst}\label{alg:catalyst}
    \begin{algorithmic}[1]
    \INPUT initial estimate $x_0 \in \Real^p$, parameters $\kappa$ and~$\alpha_0$, sequence  $(\varepsilon_k)_{k \geq 0}$, optimization method~$\mtd$;
    \STATE Initialize $q = \mu/(\mu + \kappa)$ and $y_0 = x_0$;
    \WHILE {the desired stopping criterion is not satisfied}
    \STATE Find an approximate solution of the following problem using~$\mtd$
    \begin{equation}
       x_{k}  \approx \argmin_{x \in \Real^p}  \left \{ G_{k}(x) \defin F(x)+ \frac{\kappa}{2} \| x - y_{k-1}\|^2  \right \}~~~\text{such that}~~~G_k(x_k) - G_k^\star \leq \varepsilon_k. \label{eq:approx}
    \end{equation}
    \STATE Compute $\alpha_k \in (0,1)$ from equation $\alpha_{k}^2  = (1-\alpha_{k}) \alpha_{k-1}^2 + q \alpha_{k}$;
    \STATE Compute 
    \begin{equation} \label{y_k} 
    y_{k} = x_{k}+ \beta_k (x_{k} - x_{k-1}) \quad \text{ with } \quad \beta_k = \frac{\alpha_{k-1}(1-\alpha_{k-1})}{\alpha_{k-1}^2+ \alpha_{k}}.
    \end{equation}
    \ENDWHILE
    \OUTPUT $x_k$ (final estimate).
 \end{algorithmic}
\end{algorithm}
\vs

\section{Convergence Analysis}\label{sec:convergence}
In this section, we present the theoretical properties of Algorithm~\ref{alg:catalyst},
for optimization methods~$\mtd$ with deterministic convergence rates of the
form~(\ref{mtd}). When the rate is given as an expectation, a simple
extension of our analysis described in Section~\ref{sec:applications} is needed.
For space limitation reasons, we shall sketch the proof mechanics here, and
defer the full proofs to Appendix~\ref{sec:proofs}.

\vs
\subsection{Analysis for $\mu$-Strongly Convex Objective Functions}\label{subsec:strongly}
We first analyze the convergence rate of Algorithm~\ref{alg:catalyst} for solving problem~\ref{eq:obj}, 
regardless of the complexity required to solve the subproblems~(\ref{eq:approx}).
We start with the $\mu$-strongly convex case. 
\begin{thm}[\bfseries Convergence of Algorithm~\ref{alg:catalyst}, $\mu$-Strongly Convex Case]~\label{convergence}\newline
 Choose  $\alpha_0 = \sqrt{q}$ with~$q=\mu/(\mu+\kappa)$ and
$$  \epsilon_k = \frac{2}{9} (F(x_0) -F^*) (1- \rho)^{k} \quad  \text{ with } \quad  \rho < \sqrt{q}.$$
Then, Algorithm~\ref{alg:catalyst} generates iterates $(x_k)_{k \geq 0}$ such that
\begin{equation} \label{rate of F}
F(x_{k}) -F^* \leqslant C (1-\rho)^{k+1} (F(x_0) -F^*) \quad  \text{ with } \quad  C= \frac{8}{(\sqrt{q} -\rho)^2}.
\end{equation}
\end{thm}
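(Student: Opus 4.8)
The plan is to adapt Nesterov's \emph{estimate sequence} technique to the inexact accelerated proximal point iteration defined by Algorithm~\ref{alg:catalyst}. The starting observation is that the \emph{exact} proximal point $x_k^\star \defin \argmin_x G_k(x)$ satisfies the first-order optimality condition $\kappa(y_{k-1}-x_k^\star)\in\partial F(x_k^\star)$, so that $g_k\defin\kappa(y_{k-1}-x_k^\star)$ behaves exactly like a subgradient of $F$ at $x_k^\star$. Combining this with the $\mu$-strong convexity of $F$ yields the global lower bound $F(x)\geq F(x_k^\star)+\langle g_k,\,x-x_k^\star\rangle+\frac{\mu}{2}\|x-x_k^\star\|^2$ for every $x$. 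This inequality plays, in the proximal setting, the role that the gradient/descent inequality plays in Nesterov's analysis of accelerated gradient descent, and it is what lets the whole machinery go through even though $\mtd$ only ever interacts with the regularized problems $G_k$.

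Next I would build a sequence of quadratic estimate functions $\phi_k(x)=\phi_k^\star+\frac{\gamma_k}{2}\|x-v_k\|^2$ through the canonical recursion $\phi_k(x)=(1-\alpha_{k-1})\phi_{k-1}(x)+\alpha_{k-1}\ell_k(x)$, where $\ell_k$ is the lower bound from the previous paragraph and $\phi_0(x)=F(x_0)+\frac{\gamma_0}{2}\|x-x_0\|^2$ with $\gamma_0$ tied to $\alpha_0$. The curvatures obey $\gamma_k=(1-\alpha_{k-1})\gamma_{k-1}+\alpha_{k-1}\mu$, and the key algebraic simplification is that $\alpha_0=\sqrt q$ together with the update $\alpha_k^2=(1-\alpha_k)\alpha_{k-1}^2+q\alpha_k$ is a fixed point, so $\alpha_k\equiv\sqrt q$ and the product $\lambda_k\defin\prod_{i=0}^{k-1}(1-\alpha_i)=(1-\sqrt q)^k$. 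The extrapolation coefficients $\beta_k$ in~(\ref{y_k}) are precisely what tie the centers $v_k$ to the $y_k$, which is the reason extrapolation accelerates the scheme. I would then verify inductively the two estimate-sequence properties: the upper property $\phi_k(x)\leq(1-\lambda_k)F(x)+\lambda_k\phi_0(x)$, and a lower property relating $\phi_k^\star$ to the iterate. In the \emph{exact} case the latter reads $F(x_k^\star)\leq\phi_k^\star$; in the inexact case it must be weakened to $F(x_k)\leq\phi_k^\star+\xi_k$ with an explicit error $\xi_k$.

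The crucial --- and most delicate --- step is to propagate the inexactness. Since $G_k$ is $(\mu+\kappa)$-strongly convex, the stopping criterion $G_k(x_k)-G_k^\star\leq\varepsilon_k$ gives $\frac{\mu+\kappa}{2}\|x_k-x_k^\star\|^2\leq\varepsilon_k$, hence $\|x_k-x_k^\star\|\leq\sqrt{2\varepsilon_k/(\mu+\kappa)}$. Feeding this into the induction, every occurrence of the unavailable exact point $x_k^\star$ can be replaced by the computed $x_k$ at a cost proportional to $\sqrt{\varepsilon_k}$. Because the objective gap controls $\|x_k-x^\star\|^2$ while the error enters only as $\sqrt{\varepsilon_k}$, the natural quantity to track is $\sqrt{F(x_k)-F^\star}$, and carrying the bookkeeping through yields a self-referential inequality of the form $\sqrt{\lambda_k^{-1}(F(x_k)-F^\star)}\leq\sqrt{F(x_0)-F^\star+\tfrac{\gamma_0}{2}\|x_0-x^\star\|^2}+\sum_{i=1}^k\sqrt{\varepsilon_i/\lambda_i}$ up to explicit constants. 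Resolving the appearance of $F(x_i)-F^\star$ on both sides requires a growth lemma in the spirit of the inexact proximal-gradient analysis of~\cite{proxinexact}; this, rather than any single computation, is where I expect the real work to lie.

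Finally I would substitute $\varepsilon_i=\tfrac{2}{9}(F(x_0)-F^\star)(1-\rho)^i$ and $\lambda_i=(1-\sqrt q)^i$, and bound the initial term via strong convexity, $\tfrac{\mu}{2}\|x_0-x^\star\|^2\leq F(x_0)-F^\star$, to turn it into a multiple of $F(x_0)-F^\star$. The remaining sum $\sum_i\sqrt{\varepsilon_i/\lambda_i}$ is geometric with ratio $r\defin\sqrt{(1-\rho)/(1-\sqrt q)}$, and the hypothesis $\rho<\sqrt q$ guarantees $r>1$, so the partial sums are controlled by $r^{k+1}/(r-1)$. Since $(1-\sqrt q)^k r^{2k}=(1-\rho)^k$, squaring and multiplying by $\lambda_k$ converts the rate $(1-\sqrt q)^k$ into the advertised $(1-\rho)^k$, while the factor $1/(r-1)$ --- which behaves like $1/(\sqrt q-\rho)$ after expanding $r$ --- produces, upon squaring, the constant $C=8/(\sqrt q-\rho)^2$. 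Matching the precise numerical constants $2/9$ and $8$ is then a mechanical verification.
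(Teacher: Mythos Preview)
Your proposal is essentially the paper's approach: an inexact estimate-sequence argument, the growth lemma of~\cite{proxinexact} to close the self-referential recursion, and the geometric sum with ratio $\sqrt{(1-\rho)/(1-\sqrt q)}$ to produce the rate and the constant $8/(\sqrt q-\rho)^2$ via $\sqrt{1-\rho}-\sqrt{1-\sqrt q}\geq(\sqrt q-\rho)/2$.

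Two points of divergence are worth noting. First, you build $\phi_k$ from the \emph{exact} lower bound $\ell_k$ centered at $x_k^\star$ and plan to substitute $x_k$ for $x_k^\star$ afterwards. The paper deliberately does the opposite: it defines $\phi_k$ directly from an \emph{approximate} lower bound centered at the computed $x_k$ (its Lemma~A.6, which adds the correction term $(\kappa+\mu)\langle x_k-x_k^\star,\,x-x_k\rangle-\varepsilon_k$). The reason is exactly the one you gloss over when you say the $\beta_k$ ``tie the centers $v_k$ to the $y_k$'': the cancellation that makes the induction close requires $y_{k-1}$ to satisfy a relation involving $v_{k-1}$, and if $v_{k-1}$ is written in terms of $x_{k-1}^\star$ you recover G\"uler's update $y_k=x_k^\star+\beta_k(x_k^\star-x_{k-1}^\star)$ rather than~(\ref{y_k}). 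The paper flags this explicitly as the defect in~\cite{guler:1992}; constructing $\phi_k$ with $x_k$ from the start is what makes the analysis match Algorithm~\ref{alg:catalyst}. Second, the self-referential quantity in the paper's recursion is not $\sqrt{F(x_i)-F^\star}$ but $\|x^\star-v_i\|$: the inequality is $\lambda_k^{-1}\bigl(F(x_k)-F^\star+\tfrac{\gamma_k}{2}\|x^\star-v_k\|^2\bigr)\leq S_k+\sum_i\sqrt{2\varepsilon_i\gamma_i}\,\lambda_i^{-1}\|x^\star-v_i\|$, and Lemma~A.9 is applied with $u_i=\sqrt{\gamma_i/(2\lambda_i)}\,\|x^\star-v_i\|$. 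Neither point breaks your plan, but both will force adjustments when you fill in the details.
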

This theorem characterizes the linear convergence rate of
Algorithm~\ref{alg:catalyst}. It is worth noting that the choice of~$\rho$ is
left to the discretion of the user, but it can safely be set to
$\rho=0.9\sqrt{q}$ in practice. The choice~$\alpha_0=\sqrt{q}$ was made for
convenience purposes since it leads to a simplified analysis, but larger values
are also acceptable, both from theoretical and practical point of views. 
Following an advice from~Nesterov\cite[page 81]{nesterov} originally dedicated
to his classical gradient descent algorithm, we may for instance recommend
choosing~$\alpha_0$ such that~$\alpha_0^2 + (1-q)\alpha_0 - 1 = 0$. 

The choice of the sequence $(\varepsilon_k)_{k \geq 0}$ is also subject to
discussion since the quantity $F(x_0) -F^*$ is unknown beforehand.
Nevertheless, an upper bound may be used instead, which will only affects the
corresponding constant in~(\ref{rate of F}).  Such upper bounds 
can typically be obtained by computing a duality gap at $x_0$, or by using
additional knowledge about the objective. For instance, when $F$ is
non-negative, we may simply choose $\varepsilon_k=(2/9)F(x_0)(1-\rho)^k$.

The proof of convergence uses the concept of estimate sequence invented by
Nesterov~\cite{nesterov}, and introduces an extension to deal with the
errors~$(\varepsilon_k)_{k \geq 0}$. To control the accumulation of errors, we borrow the
methodology of~\cite{proxinexact} for inexact
proximal gradient algorithms. Our construction yields a convergence
result that encompasses both strongly convex and non-strongly convex cases. Note
that estimate sequences were also used in~\cite{guler:1992}, but, as noted by~\cite{salzo2012inexact}, the proof of~\cite{guler:1992} only applies
when using an extrapolation step~(\ref{y_k}) that involves the true minimizer
of~(\ref{eq:approx}), which is unknown in practice.  To obtain a rigorous
convergence result like~(\ref{rate of F}), a different approach was needed.

Theorem~\ref{convergence} is important, but it does not provide yet the global
computational complexity of the full algorithm, which includes 
the number of iterations performed by~$\mtd$ for approximately 
solving the auxiliary problems~(\ref{eq:approx}).  The next proposition
characterizes the complexity of this inner-loop.
\begin{prop}[\bfseries Inner-Loop Complexity, $\mu$-Strongly Convex Case]~\label{prop:complexity}\newline
   Under the assumptions of Theorem~\ref{convergence}, let us consider a method $\mtd$
   generating iterates~$(z_t)_{t \geq 0}$ for minimizing the function~$G_k$
   with linear convergence rate of the form
    \begin{equation}
       G_k(z_{t}) - G_k^\star \leq A (1-\tau_{\mtd})^t (G_k(z_0) - G_k^\star). \label{eq:rategk}
    \end{equation}
    When $z_0=x_{k-1}$, the precision~$\varepsilon_k$ is reached with a number
    of iterations~$T_{\mtd}=\Otilde(1/\tau_{\mtd})$, where the notation $\Otilde$ hides some
    universal constants and some logarithmic dependencies in~$\mu$ and~$\kappa$.
\end{prop}
This proposition is generic since the assumption~(\ref{eq:rategk}) is relatively
standard for gradient-based methods~\cite{nesterov}.  It may now be used to obtain
the global rate of convergence of an accelerated algorithm. 
By calling~$F_s$ the objective function value obtained after performing $s=k T_\mtd$ iterations of the
method~$\mtd$, the true convergence rate of the accelerated algorithm~$\Acal$ is 
\begin{equation}
   F_s - F^\star = F\left(x_{\frac{s}{T_\mtd}}\right) -F^*   \leqslant C (1-\rho)^{\frac{s}{T_\mtd}} (F(x_0) -F^*)
		      \leqslant C \left (1- \frac{\rho}{T_\mtd} \right )^{s} (F(x_0) -F^*).
\end{equation}
As a result, algorithm~$\Acal$ has a global linear rate of convergence  with parameter
$$\tau_{\Acal, F} = {\rho}/{T_\mtd} = \Otilde(\tau_{\mtd}\sqrt{\mu}/{\sqrt{\mu+\kappa}}),$$
where~$\tau_{\mtd}$ typically depends on~$\kappa$ (the greater, the faster is~$\mtd$).
Consequently, $\kappa$ will be chosen to maximize the ratio $\tau_{\mtd}/{\sqrt{\mu+\kappa}}$. 
Note that for other algorithms~$\mtd$ that do not satisfy~(\ref{eq:rategk}),
additional analysis and possibly a different initialization~$z_0$ may be
necessary (see Appendix~\ref{appendix:miso} for example).

\subsection{Convergence Analysis for Convex but Non-Strongly Convex Objective Functions}\label{subsec:notstrongly}
We now state the convergence rate when the objective is \emph{not strongly convex}, that is when $\mu = 0$.

\begin{thm}[\bfseries Convergence of Algorithm~\ref{alg:catalyst}, Convex, but Non-Strongly Convex Case]~\label{convergence2}\newline
   When~$\mu=0$, choose $\alpha_0= (\sqrt{5}-1)/2$ and 
   \begin{equation}\label{eq:epsk2}
   \epsilon_k =  \frac{2(F(x_0) -F^*)}{9(k+2)^{4+\eta }}  \quad \text{ with } \eta >0. 
\end{equation}
Then, Algorithm~\ref{alg:catalyst} generates iterates $(x_k)_{k \geq 0}$ such that
\begin{equation} \label{rate of F2}
F(x_{k}) -F^* \leqslant \frac{8}{(k+2)^2} \left ( \left( 1+ \frac{2}{\eta}\right)^2 (F(x_0) -F^*)+\frac{\kappa}{2} \Vert x_0 - x^*\Vert^2 \right ) .
\end{equation}
\end{thm}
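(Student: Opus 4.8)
The plan is to obtain Theorem~\ref{convergence2} as the $\mu=0$ specialization of a single \emph{inexact estimate sequence} analysis that also yields Theorem~\ref{convergence}. The starting observation is that~(\ref{eq:approx}) is an approximate proximal-point step: if $x_k^*$ denotes the exact minimizer of the $(\mu+\kappa)$-strongly convex function $G_k$, then the stopping rule $G_k(x_k)-G_k^*\le\varepsilon_k$ immediately gives $\frac{\mu+\kappa}{2}\|x_k-x_k^*\|^2\le\varepsilon_k$, so the inexactness is quantified by $\|x_k-x_k^*\|\le\sqrt{2\varepsilon_k/(\mu+\kappa)}$. Moreover the optimality condition $\kappa(y_{k-1}-x_k^*)\in\partial F(x_k^*)$ provides, for every $x$, the lower bound $F(x)\ge F(x_k^*)+\kappa\langle y_{k-1}-x_k^*,x-x_k^*\rangle+\frac{\mu}{2}\|x-x_k^*\|^2$. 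First I would feed these minorants into the Nesterov recursion $\phi_k=(1-\alpha_{k-1})\phi_{k-1}+\alpha_{k-1}\ell_k$, with $\ell_k$ the above affine-plus-quadratic minorant and $\phi_0(x)=F(x_0)+\frac{\gamma_0}{2}\|x-x_0\|^2$, producing quadratic estimates $\phi_k(x)=\phi_k^*+\frac{\gamma_k}{2}\|x-v_k\|^2$ with $\gamma_k=(1-\alpha_{k-1})\gamma_{k-1}+\alpha_{k-1}\mu$. Matching the algorithm's $\alpha_k$-recursion to this construction forces the identity $\kappa\alpha_{k-1}^2=\gamma_k$ and hence $\gamma_0=\kappa$; this is exactly the origin of the $\frac{\kappa}{2}\|x_0-x^*\|^2$ term in~(\ref{rate of F2}).

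Second I would prove the two estimate-sequence properties in inexact form. The ``majorization'' property $\phi_k(x)\le(1-\lambda_k)F(x)+\lambda_k\phi_0(x)$, with $\lambda_k\defin\prod_{i=0}^{k-1}(1-\alpha_i)$, is unaffected by the errors and follows by a direct induction from the minorants $F\ge\ell_k$. The delicate half is the ``descent'' property, which in the exact case reads $F(x_k)\le\phi_k^*$ but now degrades to $F(x_k)\le\phi_k^*+\xi_k$ for an error term $\xi_k$ built from $\sqrt{\varepsilon_k}$. Evaluating the majorization at $x^*$ and using $\phi_k^*\le\phi_k(x^*)$ gives $F(x_k)-F^*\le\lambda_k(\phi_0(x^*)-F^*)+\xi_k$. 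Because $\xi_k$ turns out to involve $\|x_k-x_k^*\|$ multiplied by a distance that is itself of order $\sqrt{(F(x_k)-F^*)/\gamma_k}$, this is a \emph{self-referential} inequality; I would recast it, after dividing by $\lambda_k$, in the form $u_k^2\le u_0^2+\sum_{i=1}^k a_i u_i$ with $u_k\propto\sqrt{(F(x_k)-F^*)/\lambda_k}$ and $a_i\propto\sqrt{\varepsilon_i/\lambda_i}$, and invoke the accumulation lemma of~\cite{proxinexact}. This yields the master inequality
\[
\sqrt{F(x_k)-F^*}\;\le\;\sqrt{\lambda_k}\left(\sqrt{(F(x_0)-F^*)+\tfrac{\kappa}{2}\|x_0-x^*\|^2}+c\sum_{i=1}^{k}\sqrt{\tfrac{\varepsilon_i}{\lambda_i}}\right),
\]
valid for a universal constant $c$ in both the strongly convex and the non-strongly convex regimes.

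The hard part will be the inductive step for the descent property, precisely because the extrapolation~(\ref{y_k}) is performed from the \emph{approximate} iterate $x_k$ rather than from the unknown true minimizer $x_k^*$. This is the gap that invalidated the estimate-sequence argument of~\cite{guler:1992}: one must expand $G_k(x_k)$ around $x_k^*$, control every cross term $\langle\,\cdot\,,x_k-x_k^*\rangle$ by Cauchy--Schwarz together with $\|x_k-x_k^*\|\le\sqrt{2\varepsilon_k/(\mu+\kappa)}$, and arrange the bookkeeping so that the errors enter \emph{linearly} in $\sqrt{\varepsilon_k}$ (rather than through an uncontrolled product with a growing distance), which is what makes the accumulation lemma applicable and ultimately refutes the conjecture that this weak criterion forbids acceleration.

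Finally I would specialize to $\mu=0$. Here $q=0$, the recursion becomes $\alpha_k^2=(1-\alpha_k)\alpha_{k-1}^2$, and the choice $\alpha_0=(\sqrt5-1)/2$ solving $\alpha_0^2+\alpha_0-1=0$ gives the clean identity $\lambda_k=\alpha_{k-1}^2$ together with the standard bound $\lambda_k\le 4/(k+2)^2$. Substituting $\varepsilon_i=\frac{2(F(x_0)-F^*)}{9(i+2)^{4+\eta}}$ from~(\ref{eq:epsk2}) and the two-sided control of $\lambda_i$ of order $(i+2)^{-2}$, each summand becomes $\sqrt{\varepsilon_i/\lambda_i}\propto (i+2)^{-(1+\eta/2)}\sqrt{F(x_0)-F^*}$; this is where $\eta>0$ is indispensable, since a comparison with $\int_2^\infty t^{-(1+\eta/2)}\,dt$ bounds the whole series by $\frac{2}{\eta}\sqrt{F(x_0)-F^*}$ (up to the constant absorbed into $c$), and the exponent $4+\eta$ was chosen precisely so that halving it and subtracting the $(i+2)^{-2}$ from $\lambda_i$ leaves a summable $p$-series. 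The parenthesis in the master inequality is then at most $(1+\frac{2}{\eta})\sqrt{F(x_0)-F^*}+\sqrt{\tfrac{\kappa}{2}}\|x_0-x^*\|$; squaring, using $(A+B)^2\le 2A^2+2B^2$ and $\lambda_k\le 4/(k+2)^2$, produces the factor $8/(k+2)^2$ and the exact form~(\ref{rate of F2}).
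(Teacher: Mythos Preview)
Your high-level plan coincides with the paper's: build an inexact estimate sequence, prove a master bound of the form
\[
F(x_k)-F^*\le \lambda_k\Bigl(\sqrt{S_k}+2\sum_{i=1}^k\sqrt{\varepsilon_i/\lambda_i}\Bigr)^2,
\]
then specialize to $\mu=0$ with $\alpha_0=(\sqrt5-1)/2$, use the two-sided control $2/(k+2)^2\le\lambda_k\le 4/(k+2)^2$, and finish with the integral comparison $\sum_{i\ge1}(i+2)^{-(1+\eta/2)}\le 2/\eta$. That final specialization is exactly what the paper does, and your computation of $\gamma_0=\kappa$ from $\alpha_0^2=1-\alpha_0$ is correct.

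However, one key mechanism in your sketch differs from the paper and, as written, reproduces the very gap you flag from~\cite{guler:1992}. You propose to build $\phi_k$ from the \emph{exact} minorant $\ell_k$ centered at $x_k^*$. With that choice, the minimizer $v_k$ of $\phi_k$ is expressed through the sequence $(x_i^*)$, whereas the extrapolation~(\ref{y_k}) is computed from $(x_i)$. The cancellation that makes $y_{k-1}$ ``the right point'' (the identity $x_{k-1}-y_{k-1}+\frac{\alpha_{k-1}\gamma_{k-1}}{\gamma_k}(v_{k-1}-y_{k-1})=0$ in the paper) then fails. The paper's fix is to center the construction at the \emph{computed} iterate~$x_k$: one first proves the approximate minorant
\[
F(x)\ge F(x_k)+\langle\kappa(y_{k-1}-x_k),x-x_k\rangle+\tfrac{\mu}{2}\|x-x_k\|^2+(\kappa+\mu)\langle x_k-x_k^*,x-x_k\rangle-\varepsilon_k,
\]
and defines $\phi_k=(1-\alpha_{k-1})\phi_{k-1}+\alpha_{k-1}\bigl[F(x_k)+\langle\kappa(y_{k-1}-x_k),\cdot-x_k\rangle+\frac{\mu}{2}\|\cdot-x_k\|^2\bigr]$. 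This makes $v_k$ a function of $(x_i)$, so the algorithmic $y_k$ and the analytic $v_k$ match, and the residual $(\kappa+\mu)\langle x_k-x_k^*,\cdot\rangle$ is pushed into~$\xi_k$.

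A second, related point: in the accumulation step the paper does \emph{not} take $u_k\propto\sqrt{(F(x_k)-F^*)/\lambda_k}$. After combining $F(x_k)\le\phi_k^*+\xi_k$ with $\phi_k(x^*)=\phi_k^*+\frac{\gamma_k}{2}\|x^*-v_k\|^2$, the error telescopes to a term $\sqrt{2\varepsilon_k\gamma_k}\,\|x^*-v_k\|$, so the self-referential inequality is in $u_k=\sqrt{\gamma_k/(2\lambda_k)}\,\|x^*-v_k\|$, with $S_k=\phi_0(x^*)-F^*+\sum_i\varepsilon_i/\lambda_i$ (note the extra $\sum\varepsilon_i/\lambda_i$ inside $S_k$, which you then absorb via $\sqrt{a+b}\le\sqrt a+\sqrt b$ to get the ``$+3\sum$'' rather than ``$+2\sum$''). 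Your version with $u_k\propto\sqrt{(F(x_k)-F^*)/\lambda_k}$ would require the cross term to be bounded by something proportional to $\sqrt{F(x_i)-F^*}$, which is not what falls out of the computation.
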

This theorem is the counter-part of Theorem~\ref{convergence} when~$\mu=0$.  
The choice of~$\eta$ is left to the discretion of the user; it empirically seem to have very low influence
on the global convergence speed, as long as it is chosen small enough (e.g., we use $\eta=0.1$ in practice).
It shows that Algorithm~\ref{alg:catalyst} achieves the optimal rate of convergence of first-order methods,
but \emph{it does not take into account the complexity of solving the subproblems~(\ref{eq:approx})}. Therefore, we need the following proposition:
\begin{prop}[\bfseries Inner-Loop Complexity, Non-Strongly Convex Case]~\label{prop:complexity2}\newline
   Assume that $F$ has bounded level sets. Under the assumptions of Theorem~\ref{convergence2}, 
   let us consider a method $\mtd$ generating iterates~$(z_t)_{t \geq 0}$ for minimizing the function~$G_k$
   with linear convergence rate of the form~(\ref{eq:rategk}). Then, there
   exists $T_{\mtd}=\Otilde(1/\tau_{\mtd})$, such that for any $k \geqslant 1$, solving $G_k$ with
   initial point $x_{k-1}$ requires at most $T_{\mtd} \log (k+2)$ iterations of
   $\mtd$.
\end{prop}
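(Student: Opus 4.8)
The plan is to count how many iterations of $\mtd$ are needed to drive $G_k(z_t) - G_k^\star$ below $\varepsilon_k$, starting from $z_0 = x_{k-1}$. From the linear rate~(\ref{eq:rategk}), the number of inner iterations $T$ suffices as soon as $A(1-\tau_\mtd)^T (G_k(z_0) - G_k^\star) \leq \varepsilon_k$, which rearranges to
\begin{equation}
T \geq \frac{1}{\tau_\mtd} \log\!\left( \frac{A\,(G_k(x_{k-1}) - G_k^\star)}{\varepsilon_k} \right),
\end{equation}
using $-\log(1-\tau_\mtd) \geq \tau_\mtd$. So the whole task reduces to bounding the ratio inside the logarithm by something that grows at most polynomially in $k$; then the logarithm becomes $\Otilde(\log(k+2))$, and dividing by $\tau_\mtd$ yields the claimed $T_\mtd \log(k+2)$ with $T_\mtd = \Otilde(1/\tau_\mtd)$.

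First I would bound the \emph{initial gap} $G_k(x_{k-1}) - G_k^\star$. Since $G_k(x) = F(x) + \frac{\kappa}{2}\|x - y_{k-1}\|^2$, I can write $G_k(x_{k-1}) - G_k^\star \leq G_k(x_{k-1}) - F^\star$ (because $G_k^\star \geq F^\star$, as the quadratic term is nonnegative and $F$ is the unregularized objective). Expanding, $G_k(x_{k-1}) - F^\star = (F(x_{k-1}) - F^\star) + \frac{\kappa}{2}\|x_{k-1} - y_{k-1}\|^2$. The first term $F(x_{k-1}) - F^\star$ is controlled by Theorem~\ref{convergence2}, which gives an $O(1/(k+1)^2)$ bound; in particular it is bounded by a constant independent of $k$. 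For the second term, I would invoke the \textbf{bounded level set} hypothesis: all iterates $x_k$ and the extrapolated points $y_k$ stay in a bounded region, so $\|x_{k-1} - y_{k-1}\|^2$ is uniformly bounded by some constant $R^2$. This requires a short argument showing that $(x_k)$ and $(y_k)$ remain in a bounded set—since $F(x_k)\to F^\star$, the $x_k$ lie in a level set, and $y_k = x_k + \beta_k(x_k - x_{k-1})$ is a bounded combination of bounded points (the extrapolation coefficients $\beta_k$ being bounded). Thus the initial gap is $O(1)$, uniformly in $k$.

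Next I would compare this $O(1)$ numerator against the denominator $\varepsilon_k = \frac{2(F(x_0)-F^\star)}{9(k+2)^{4+\eta}}$ from~(\ref{eq:epsk2}), which \emph{decays} polynomially in $k$. The ratio is therefore $O((k+2)^{4+\eta})$, and its logarithm is $(4+\eta)\log(k+2) + O(1) = \Otilde(\log(k+2))$. Substituting into the bound for $T$ and absorbing the constants $A$, $\eta$, and the $\log$ prefactor into $T_\mtd = \Otilde(1/\tau_\mtd)$ gives $T \leq T_\mtd \log(k+2)$ for all $k \geq 1$, as claimed.

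The main obstacle will be the uniform boundedness of $\|x_{k-1} - y_{k-1}\|$: the bounded-level-set assumption directly controls $x_{k-1}$ (via $F(x_{k-1}) \leq F(x_0)$ once the method is monotone, or via the explicit decay from Theorem~\ref{convergence2}), but $y_{k-1}$ is an \emph{extrapolated} point and need not lie in a level set a priori. One must verify that the extrapolation coefficients $\beta_k$ stay bounded—this follows from examining the recursion $\alpha_k^2 = (1-\alpha_k)\alpha_{k-1}^2$ with $q=0$, which makes $\alpha_k$ behave like $2/(k+2)$, so $\beta_k = \alpha_{k-1}(1-\alpha_{k-1})/(\alpha_{k-1}^2 + \alpha_k)$ converges to a finite limit. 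Combining this with the boundedness of consecutive $x_k$ yields the uniform bound $R$ on the extrapolation distance, closing the argument.
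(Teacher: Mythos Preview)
Your proposal is correct and takes a slightly more direct route than the paper. The paper reuses Lemma~\ref{6.11} (which was developed for the strongly convex case) to obtain
\[
G_k(x_{k-1}) - G_k^\star \leq 2\varepsilon_{k-1} + \frac{\kappa^2}{\kappa+\mu}\|y_{k-1}-y_{k-2}\|^2,
\]
and then controls $\|y_{k-1}-y_{k-2}\|$ via the bounded level set assumption (mimicking Lemma~\ref{6.13} with $B$ replacing the strong convexity bound). You instead bypass that lemma entirely with the one-line observation $G_k^\star \geq F^\star$, which gives the decomposition $G_k(x_{k-1}) - G_k^\star \leq (F(x_{k-1}) - F^\star) + \frac{\kappa}{2}\|x_{k-1} - y_{k-1}\|^2$, and you bound $\|x_{k-1} - y_{k-1}\| = \beta_{k-1}\|x_{k-1} - x_{k-2}\|$ directly. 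Both arguments land on an $O(1)$ numerator, an $O((k+2)^{4+\eta})$ ratio, and hence $T_\mtd \log(k+2)$ inner iterations. Your version is a touch simpler for this specific case since it avoids invoking the $\varepsilon_{k-1}$-dependent lemma; the paper's version has the advantage of reusing the same machinery as Proposition~\ref{prop:complexity}. One minor simplification for you: you do not need to analyze the limit of $\beta_k$---the paper already shows $\beta_k \leq 1$ for all $k$ regardless of $\alpha_0$ (see the proof of Lemma~\ref{6.13}), so boundedness of $(x_k)$ alone suffices to bound $\|x_{k-1} - y_{k-1}\|$.
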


We can now draw up the global complexity of an accelerated algorithm~$\Acal$
when~$\mtd$ has a linear convergence rate~(\ref{eq:rategk})
for~$\kappa$-strongly convex objectives. To produce $x_k$, $\mtd$ is called at
most $k T_{\mtd} \log(k+2)$ times. 
Using the global iteration counter $s = kT_\mtd\log(k+2)$, we get
\begin{align}
   F_s -F^* & \leqslant \frac{8T_{\mtd}^2\log^2(s)}{s^2} \left ( \left( 1+ \frac{2}{\eta}\right)^2 (F(x_0) -F^*)+\frac{\kappa}{2} \Vert x_0 - x^*\Vert^2 \right ) \; .
\end{align}
If~$\mtd$ is a first-order method, this rate is \emph{near-optimal}, up to a logarithmic factor, when compared to the optimal rate $O(1/s^2)$,
which may be the price to pay for using a generic acceleration scheme.

\section{Acceleration in Practice}\label{sec:applications}
We show here how to accelerate existing algorithms~$\mtd$ and
compare the convergence rates obtained before and after \boots acceleration.
For all the algorithms we consider, 
we study rates of convergence   
in terms of \emph{total number of iterations} (in expectation, when necessary)
to reach accuracy~$\epsilon$. 
We first show how to accelerate 
full gradient and randomized coordinate descent algorithms~\cite{richtarik2014}.
Then,
we discuss other approaches such as SAG~\cite{sag}, SAGA~\cite{saga}, or
SVRG~\cite{proxsvrg}.
Finally, we present a new proximal version of the incremental gradient approaches 
Finito/MISO~\cite{finito,miso}, along with its accelerated version. 
Table~\ref{tab:accel-summary} summarizes 
the acceleration obtained for the algorithms considered. 

\vs
\paragraph{Deriving the global rate of convergence.}
The convergence rate of an accelerated 
algorithm $\mathcal{A}$ is driven by the parameter $\kappa$. 
In the strongly convex case, the best choice is the one that maximizes 
the ratio $\tau_{\mtd, G_k}/\sqrt{\mu + \kappa}$.
As discussed in Appendix~\ref{sec:accel-factor-calculs}, this rule also holds
when~(\ref{eq:rategk}) is given in expectation and in many cases where the
constant~$\C_{\mtd,G_k}$ is different than $A(G_k(z_0) -
G_k^\star)$ from~(\ref{eq:rategk}).
When~$\mu=0$, the choice of~$\kappa > 0$ only affects
the complexity by a multiplicative constant. A rule of thumb is to maximize the ratio
$\tau_{\mtd, G_k}/\sqrt{L + \kappa}$ (see Appendix~\ref{sec:accel-factor-calculs} for more details).

After choosing~$\kappa$, the global iteration-complexity is given by
$\text{Comp} \leq k_{\text{in}} \: k_{\text{out}}$, where $k_{\text{in}}$ is an
upper-bound on the number of iterations performed by~$\mtd$ per inner-loop, and
$k_{\text{out}}$ is the upper-bound on the number of outer-loop iterations,
following from Theorems~\ref{convergence}-\ref{convergence2}. 
Note that for simplicity, we always consider that
$L \gg \mu$ such that we may write $L-\mu$ simply as~``$L$'' in the convergence
rates.

\subsection{Acceleration of Existing Algorithms}

\paragraph{Composite minimization.}
Most of the algorithms we consider here, namely the proximal gradient
method~\cite{fista,nesterov2013gradient}, SAGA~\cite{saga},
(Prox)-SVRG~\cite{proxsvrg}, can handle composite objectives with a
regularization penalty $\reg$ that admits a proximal operator~$\prox_{\psi}$, defined for any $z$ as
\begin{equation*}
\prox_{\psi}(z) \defin \argmin_{y \in \mathbb{R}^p} \left\{  \psi(y) + \frac{1}{2} \Vert y- z  \Vert^2  \right\} \; .
\end{equation*}
Table~\ref{tab:accel-summary} presents convergence rates that are valid for proximal and non-proximal 
settings, since most methods we consider are able to deal with such non-differentiable penalties.
The exception is SAG~\cite{sag}, for which proximal variants are not analyzed. 
The incremental method Finito/MISO has also been limited to non-proximal settings so far. 
In~Section~\ref{sec:prox-miso-intro}, we actually introduce the extension of MISO to composite minimization, 
and establish its theoretical convergence rates. 

\vs
\paragraph{Full gradient method.}
A first illustration is the algorithm obtained when accelerating the regular ``full'' gradient descent (FG),
and how it contrasts with Nesterov's accelerated variant (AFG). Here, the optimal
choice for $\kappa$ is $ L -2\mu$. In the strongly convex case, 
we get an accelerated rate of convergence in  $\Otilde(n\sqrt{L/\mu}\log(1/\varepsilon))$, 
which is the same as AFG up to logarithmic terms.
A similar result can also be obtained for randomized coordinate descent methods~\cite{richtarik2014}.

\vs
\paragraph{Randomized incremental gradient.}
We now consider randomized incremental gradient methods, resp. SAG~\cite{sag}
and SAGA~\cite{saga}. When~$\mu > 0$, we focus on the ``ill-conditioned'' setting $n \leq
L/\mu$, where these methods have the complexity
$O((L/\mu)\log(1/\varepsilon))$. Otherwise, their complexity becomes $O(n
\log(1/\varepsilon))$, which is independent of the condition number and seems
theoretically optimal~\cite{agarwal}.

For these methods, the best choice for $\kappa$ has the form
$\kappa  = {a(L-\mu)}/{(n+b)} -\mu$, 
with $(a,b) = (2, -2)$ for SAG, $(a,b) = (1/2, 1/2)$ for SAGA. A similar formula, with a constant~$L'$ in place of $L$, holds for SVRG; we omit it here for brevity. 
SDCA~\cite{accsdca} and Finito/MISO~\cite{finito,miso} are actually
related to incremental gradient methods, and the choice for $\kappa$ has a
similar form with $(a,b) = (1, 1)$.  

\begin{table}
\label{tab:accel-summary}
   \centering
   \footnotesize
   \renewcommand{\arraystretch}{1.4}
   \begin{tabular}{|l|c|c||c|c|}
      \hline
      & Comp. $\mu > 0$ & Comp. $\mu = 0$ & Catalyst $\mu > 0$ & Catalyst $\mu = 0$ \\
      \hline
      FG  & $O\left(n\left(\frac{L}{\mu}\right)\log\left(\frac{1}{\varepsilon}\right)\right)$ & \multirow{3}{*}{$O\left(n\frac{L}{\varepsilon}\right)$} 
      &  $\Otilde\left(n\sqrt{\frac{L}{\mu}}\log\left(\frac{1}{\varepsilon}\right)\right)$ & \multirow{6}{*}{$\Otilde\left(n\frac{L}{\sqrt{\varepsilon}}\right)$} \\
      \cline{1-2}\cline{4-4}
      SAG~\cite{sag}  & \multirow{4}{*}{$O\left(\frac{L}{\mu} \log\left(\frac{1}{\varepsilon}\right)\right)$} &  & \multirow{4}{*}{$\Otilde\left(\sqrt{\frac{n L}{\mu}} \log\left(\frac{1}{\varepsilon}\right)\right)$}&  \\
      \cline{1-1}
      SAGA~\cite{saga} &  &  &  &  \\
      \cline{1-1}\cline{3-3}
      Finito/MISO-Prox &  & \multirow{3}{*}{not avail.} &  &  \\
      \cline{1-1}
      SDCA~\cite{sdca}  &  &  & &  \\
      \cline{1-2}\cline{4-4}
      SVRG~\cite{proxsvrg}  & $O\left(\frac{L'}{\mu} \log\left(\frac{1}{\varepsilon}\right)\right)$ &  &  $\Otilde\left(\sqrt{\frac{nL'}{\mu}} \log\left(\frac{1}{\varepsilon}\right)\right)$ &  \\
      \hline
      \hline
      Acc-FG~\cite{nesterov2013gradient} & $O\left(n\sqrt{\frac{L}{\mu}}\log\left(\frac{1}{\varepsilon}\right)\right)$ 
      & $O\left(n\frac{L}{{\sqrt{\varepsilon}}}\right)$ & \multicolumn{2}{c|}{\multirow{2}{*}{no acceleration}} \\
      \cline{1-3}
      Acc-SDCA~\cite{accsdca} & $\Otilde\left(\sqrt{\frac{nL}{\mu}}\log\left(\frac{1}{\varepsilon}\right)\right)$ &  not avail.
      &  \multicolumn{2}{c|}{} \\
      \hline
   \end{tabular}
   \vs
   \vs
   \caption{Comparison of rates of convergence, before and after the \boots
      acceleration, resp. in the strongly-convex and non strongly-convex cases.
      {\bf{To simplify, we only present the case where $n \leq L/\mu$ when~$\mu >
   0$}}. For all incremental algorithms, there is indeed no acceleration
otherwise. The quantity~$L'$ for SVRG
is the average Lipschitz constant of the functions~$f_i$~(see \cite{proxsvrg}).
}
\vs
\end{table}

\subsection{Proximal MISO and its Acceleration}
\label{sec:prox-miso-intro}
Finito/MISO was proposed in~\cite{finito} and~\cite{miso} for solving the
problem~(\ref{eq:obj}) when~$\psi=0$ and when~$f$ is a sum of $n$ $\mu$-strongly convex
functions~$f_i$ as in~(\ref{eq:obj2}), which are also differentiable
with~$L$-Lipschitz derivatives.
The algorithm maintains a list of quadratic lower bounds---say~$(d_i^k)_{i=1}^n$ at
iteration~$k$---of the functions~$f_i$ and randomly updates one of them at each iteration by using
strong-convexity inequalities. The current iterate~$x_k$ is then obtained by minimizing
the lower-bound of the objective
\begin{equation}
   x_k = \argmin_{x \in \Real^p} \left\{ D_k(x) = \frac{1}{n}\sum_{i=1}^n d_i^k(x) \right\}.
\end{equation}

Interestingly, since~$D_k$ is a lower-bound of~$F$ we also have~$D_k(x_k) \leq
F^\star$, and thus the quantity $F(x_k) - D_k(x_k)$ can be used as an
optimality certificate that upper-bounds $F(x_k)-F^\star$. 
Furthermore, this certificate was shown to converge to zero with a rate similar to SAG/SDCA/SVRG/SAGA under
the condition~$n \geq 2L/\mu$. In this section, we show how to remove this
 condition and how to provide support to non-differentiable
functions~$\psi$ whose proximal operator can be easily computed. We shall
briefly sketch the main ideas, and we refer to
Appendix~\ref{appendix:miso} for a thorough presentation.

The first idea to deal with a nonsmooth regularizer~$\psi$ is to change the definition of~$D_k$:
\begin{equation*}
   D_k(x) = \frac{1}{n}\sum_{i=1}^n d_i^k(x)  + \psi(x), 
\end{equation*}
which was also proposed in~\cite{finito} without a convergence proof. 
Then, because the $d_i^k$'s are quadratic functions, the minimizer~$x_k$ of~$D_k$
can be obtained by computing the proximal operator of~$\psi$ at a particular point.
The second idea to remove the condition~$n \geq 2L/\mu$ is to modify the update
of the lower bounds~$d_i^k$. Assume that index~$i_k$ is selected
among~$\{1,\ldots,n\}$ at iteration~$k$, then
 \begin{equation*}
   d_i^k(x) = \!\left\{ 
      \begin{array}{ll}
         (1-\delta)d_i^{k-1}(x) \!+ \delta(f_i(x_{k-1}) \!+\! \langle \nabla f_i(x_{k-1}) ,x - x_{k-1} \rangle \!+\! \frac{\mu}{2}\|x-x_{k-1}\|^2) & \text{if}~~~  i=i_k \\
         d_i^{k-1}(x) & \text{otherwise} 
      \end{array}\right.
\end{equation*}
Whereas the original Finito/MISO uses~$\delta=1$, our new variant uses
$\delta=\min(1,{\mu n}/{2(L-\mu)})$.  The
resulting algorithm turns out to be very close to variant ``5'' of proximal
SDCA~\cite{sdca}, which corresponds to using a different value
for~$\delta$. The main difference between SDCA and MISO-Prox is that the latter
does not use duality.
It also provides a different (simpler) optimality certificate~$F(x_k)-D_k(x_k)$, which
is guaranteed to converge linearly, as stated in the next theorem.
\begin{thm}[\bfseries Convergence of MISO-Prox]~\label{thm1.1}\newline
   Let $(x_k)_{k \geq 0}$ be obtained by MISO-Prox, then 
   \begin{equation}\label{eq:thm_miso}
      \mathbb{E}[F(x_{k})]  - F^* \leqslant \frac{1}{ \tau} (1-\tau)^{k+1} \left ( F(x_0)  - D_0(x_0) \right )~~\text{with}~~ \tau \geqslant \min \Big\{ \frac{\mu}{4L}, \frac{1}{2n} \Big\}.
\end{equation}
   Furthermore, we also have fast convergence of the certificate
   \begin{displaymath}
      \mathbb{E}[F(x_{k}) - \Surr_k(x_k)]   \leqslant \frac{1}{ \tau} (1-\tau)^{k} \left(F^* - D_0(x_0) \right ).
   \end{displaymath}
\end{thm}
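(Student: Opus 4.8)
The plan is to track the single scalar quantity $F^\star - \E[D_k(x_k)]$, show that it contracts by a factor $(1-\tau)$ per iteration, and then convert this contraction into the two stated bounds. The argument is purely primal and rests on three structural facts about the lower bounds $d_i^k$. First I would check by induction that each $d_i^k$ is a genuine lower bound $d_i^k \le f_i$: the update replaces $d_{i_k}^{k-1}$ by the convex combination $(1-\delta)d_{i_k}^{k-1} + \delta\,\ell_{i_k}$ of two lower bounds, where $\ell_i(x) = f_i(x_{k-1}) + \langle \nabla f_i(x_{k-1}), x-x_{k-1}\rangle + \frac{\mu}{2}\|x-x_{k-1}\|^2 \le f_i(x)$ by $\mu$-strong convexity of $f_i$. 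Hence $D_k \le F$, which gives $D_k(x_k) = \min_x D_k \le D_k(x^\star) \le F^\star$, so that $F(x_k)-D_k(x_k)$ is a valid upper bound on $F(x_k)-F^\star$. Second, since every $d_i^k$ keeps the same quadratic part $\frac{\mu}{2}\|x\|^2$, the smooth part of $D_k$ is $\mu$-strongly convex with fixed curvature; combining the optimality of $x_k$ for $D_k$ with a subgradient inequality for $\reg$ yields the key inequality $D_k(x) \ge D_k(x_k) + \frac{\mu}{2}\|x-x_k\|^2$ for all $x$.

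Next I would derive the one-step recursion. Because only the $i_k$-th bound changes, $D_k(x) = D_{k-1}(x) + \frac{\delta}{n}w_{i_k}(x)$ with $w_i \defin \ell_i - d_i^{k-1}$, and crucially $w_i$ is \emph{affine} (both $\ell_i$ and $d_i^{k-1}$ share the leading term $\frac{\mu}{2}\|x\|^2$). Lower-bounding $D_{k-1}$ via the strong-convexity inequality above and minimizing the resulting quadratic-plus-affine function in closed form gives, for each realization of $i_k$,
\begin{equation*}
D_k(x_k) \ge D_{k-1}(x_{k-1}) + \tfrac{\delta}{n}w_{i_k}(x_{k-1}) - \tfrac{\delta^2}{2\mu n^2}\|\nabla w_{i_k}\|^2 .
\end{equation*}
Taking expectation over the uniform draw of $i_k$ and using $\ell_i(x_{k-1}) = f_i(x_{k-1})$, I would identify the first-order term as the current certificate, $\E_{i_k}[w_{i_k}(x_{k-1})] = F(x_{k-1}) - D_{k-1}(x_{k-1})$.

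The main obstacle is controlling the quadratic term $\E_{i_k}[\|\nabla w_{i_k}\|^2]$ and balancing it against the favorable first-order term. Here I would set $g_i \defin f_i - d_i^{k-1}$, which is nonnegative, convex, and $(L-\mu)$-smooth: subtracting the common $\frac{\mu}{2}\|x\|^2$ from the $L$-smooth $\mu$-strongly convex $f_i$ and from $d_i^{k-1}$ leaves an $(L-\mu)$-smooth convex function minus an affine one. Since $\nabla w_i$ is constant and equal to $\nabla g_i(x_{k-1})$, while $w_i(x_{k-1}) = g_i(x_{k-1})$, the standard bound $\|\nabla g_i(x_{k-1})\|^2 \le 2(L-\mu)\,g_i(x_{k-1})$ for nonnegative smooth convex functions gives $\E_{i_k}[\|\nabla w_{i_k}\|^2] \le 2(L-\mu)\big(F(x_{k-1})-D_{k-1}(x_{k-1})\big)$. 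Substituting, the net coefficient of the certificate becomes $\frac{\delta}{n}\big(1 - \frac{\delta(L-\mu)}{\mu n}\big)$, and inserting $\delta = \min(1, \mu n/(2(L-\mu)))$ shows it is at least $\tau = \min\{\mu/(4L),\,1/(2n)\}$. Since $F(x_{k-1})-D_{k-1}(x_{k-1}) \ge F^\star - D_{k-1}(x_{k-1})$, this yields the contraction $F^\star - \E[D_k(x_k)] \le (1-\tau)\big(F^\star - \E[D_{k-1}(x_{k-1})]\big)$.

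Finally I would unroll and convert. Iterating gives $F^\star - \E[D_k(x_k)] \le (1-\tau)^k(F^\star - D_0(x_0))$. For the certificate bound, I would reuse the one-step inequality at index $k{+}1$ as $\tau\,(F(x_k)-D_k(x_k)) \le \E_{i_{k+1}}[D_{k+1}(x_{k+1})] - D_k(x_k) \le F^\star - D_k(x_k)$, take full expectation, and combine with the contraction to get $\E[F(x_k)-D_k(x_k)] \le \frac{1}{\tau}(1-\tau)^k(F^\star-D_0(x_0))$. For the first bound, I would split $F(x_k)-D_k(x_k) = (F(x_k)-F^\star) + (F^\star-D_k(x_k))$ inside that same inequality to isolate $\tau\,(F(x_k)-F^\star) \le (1-\tau)(F^\star - D_k(x_k))$, take expectation, and unroll once more; bounding $F^\star - D_0(x_0) \le F(x_0)-D_0(x_0)$ then gives $\E[F(x_k)]-F^\star \le \frac{1}{\tau}(1-\tau)^{k+1}(F(x_0)-D_0(x_0))$, as claimed.
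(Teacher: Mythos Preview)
Your proof is correct and arrives at exactly the same one-step recursion
\[
\E_{i_k}[D_k(x_k)] \;\ge\; D_{k-1}(x_{k-1}) + \tau\big(F(x_{k-1})-D_{k-1}(x_{k-1})\big),\qquad
\tau=\frac{\delta}{n}\Big(1-\frac{\delta(L-\mu)}{\mu n}\Big),
\]
as the paper, and the conversion to the two stated bounds is the same. The route to this recursion, however, is organized differently. The paper works in the ``coordinates'' $\barz_k$: it proves the pointwise lower bound $D_k(x)\ge D_{k-1}(x)-\frac{\delta(L-\mu)}{2n}\|x-x_{k-1}\|^2$ (using $L$-smoothness of $f_i$), the identity $D_k(x)-D_k(y)=D_{k-1}(x)-D_{k-1}(y)-\mu\langle \barz_k-\barz_{k-1},x-y\rangle$, and the bound $D_k(x_{k-1})-D_k(x_k)\le \frac{\mu}{2}\|\barz_k-\barz_{k-1}\|^2$, then combines them by optimizing over an auxiliary point $y$. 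You bypass $\barz_k$ entirely: you write $D_k=D_{k-1}+\frac{\delta}{n}w_{i_k}$ with $w_{i_k}$ affine, lower-bound $D_{k-1}$ by its $\mu$-strongly convex quadratic around $x_{k-1}$, minimize in closed form, and control the resulting term $\|\nabla w_{i_k}\|^2$ via the inequality $\|\nabla g\|^2\le 2(L-\mu)g$ for the nonnegative $(L-\mu)$-smooth convex gap $g_i=f_i-d_i^{k-1}$. The two computations are equivalent once one notices $\barz_k-\barz_{k-1}=-\frac{\delta}{\mu n}\nabla w_{i_k}$, but your packaging is more compact (three lemmas collapse into one minimization) and makes the role of the smoothness constant $L-\mu$ more transparent. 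The paper's route, on the other hand, yields intermediate inequalities in terms of the algorithmic quantity $\barz_k$, which can be useful elsewhere.
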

The proof of convergence is given in Appendix~\ref{appendix:miso}.
Finally, we conclude this section by noting that MISO-Prox enjoys the catalyst
acceleration, leading to the iteration-complexity presented in
Table~\ref{tab:accel-summary}. Since the convergence rate~(\ref{eq:thm_miso}) does not
have exactly the same form as~(\ref{eq:rategk}), Propositions~\ref{prop:complexity} and~\ref{prop:complexity2}
cannot be used and additional analysis, given in Appendix~\ref{appendix:miso}, is needed.
Practical forms of the algorithm are also presented there, along
with discussions on how to initialize it.

\section{Experiments}\label{sec:exp}
We evaluate the Catalyst acceleration on three methods that have never
been accelerated in the past: SAG~\cite{sag},
SAGA~\cite{saga}, and MISO-Prox. 
We focus on $\ell_2$-regularized logistic regression, where the
regularization parameter~$\mu$ yields a lower bound on the
strong convexity parameter of the problem. 

We use three datasets used in~\cite{miso},
namely~\textrm{real-sim, rcv1}, and~\textrm{ocr}, which are relatively large, with up
to~$n=2\,500\,000$ points for~\textrm{ocr} and~$p=47\,152$ variables
for~\textrm{rcv1}. We consider three regimes: $\mu=0$ (no
regularization), $\mu/L=0.001/n$ and~$\mu/L=0.1/n$,
which leads significantly larger condition numbers 
than those used in other studies ($\mu/L \approx 1/n$ in~\cite{miso,sag}).
We compare~MISO, SAG, and SAGA with their default parameters, which are recommended by their theoretical analysis (step-sizes
$1/L$ for SAG and~$1/3L$ for SAGA), and study several accelerated variants. 
The values of $\kappa$ and $\rho$ and the sequences~$(\varepsilon_k)_{k \geq
0}$ are those suggested in the previous sections, with~$\eta\!=\!0.1$ in~(\ref{eq:epsk2}).
Other implementation details are presented in Appendix~\ref{appendix:exp}.

{The restarting strategy for $\mtd$ is key to achieve acceleration in practice}. 
All of the methods we compare store~$n$ gradients evaluated at previous iterates of the algorithm. 
We always use the gradients from the previous run of~$\mtd$ to initialize a new one.  {We detail in 
Appendix~\ref{appendix:exp} the initialization for each method.} 
Finally, we evaluated a heuristic that constrain~$\mtd$ to
always perform at most~$n$ iterations (one pass over the data); we call this
variant AMISO2 for MISO whereas AMISO1 refers to the regular ``vanilla''
accelerated variant, and we also use this heuristic to accelerate SAG.

The results are reported in Table~\ref{table:exp}.
We always obtain a huge speed-up for MISO, which suffers from
numerical stability issues when the condition number is {very large} (for
instance, $\mu/L = 10^{-3}/n = 4.10^{-10}$ for~\textrm{ocr}). Here, not only does
the catalyst algorithm accelerate MISO, but it also stabilizes it.
Whereas MISO is slower than SAG and SAGA in this ``small $\mu$'' regime, AMISO2 is almost
systematically the
best performer. We are also able to accelerate SAG and SAGA in general, even
though the improvement is less significant than for MISO. In particular, SAGA without 
acceleration proves to be the best method on~\textrm{ocr}. One reason may be
its ability to adapt to the unknown strong convexity parameter~$\mu' \geq
\mu$ of the objective near the solution. When~$\mu'/L \geq 1/n$, we indeed obtain
a regime where acceleration does not occur (see
Sec.~\ref{sec:applications}). Therefore, this experiment suggests that
adaptivity to unknown strong convexity is of high interest for incremental
optimization.

\vs\vs

\begin{figure}[hbtp!]
   \centering
   ~~\includegraphics[width=0.28\linewidth]{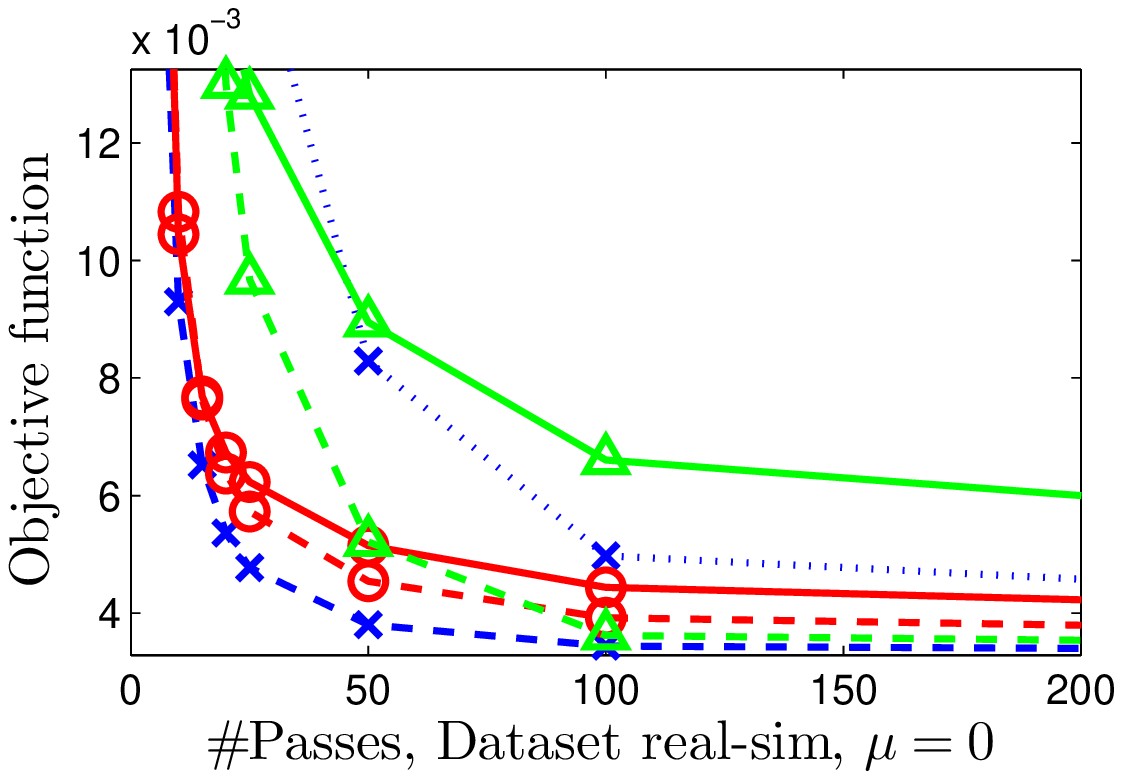}~ 
   \includegraphics[width=0.3\linewidth]{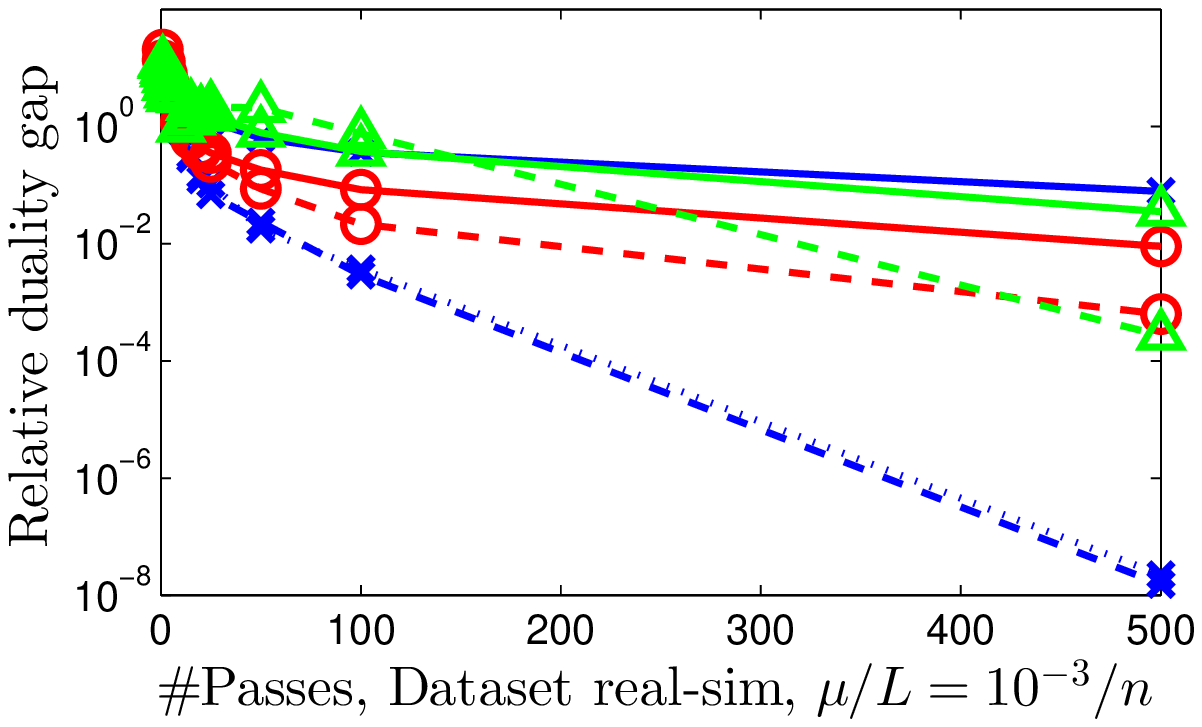} 
   \includegraphics[width=0.3\linewidth]{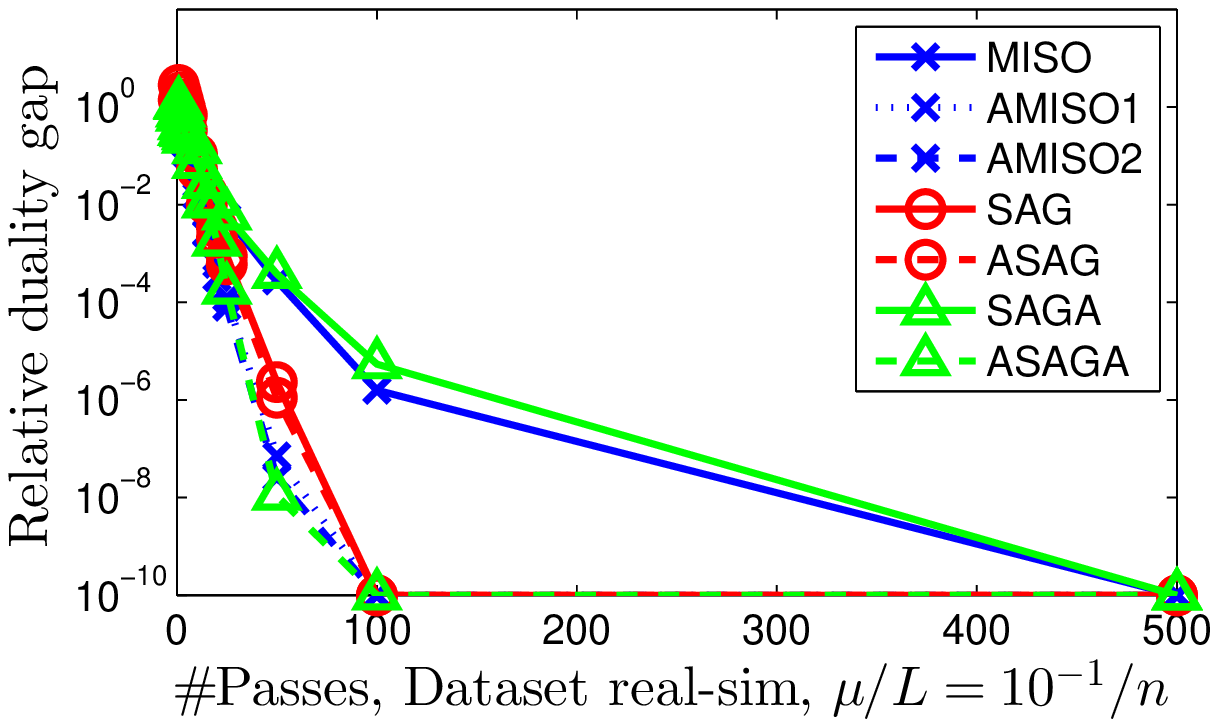} \\
   \includegraphics[width=0.29\linewidth]{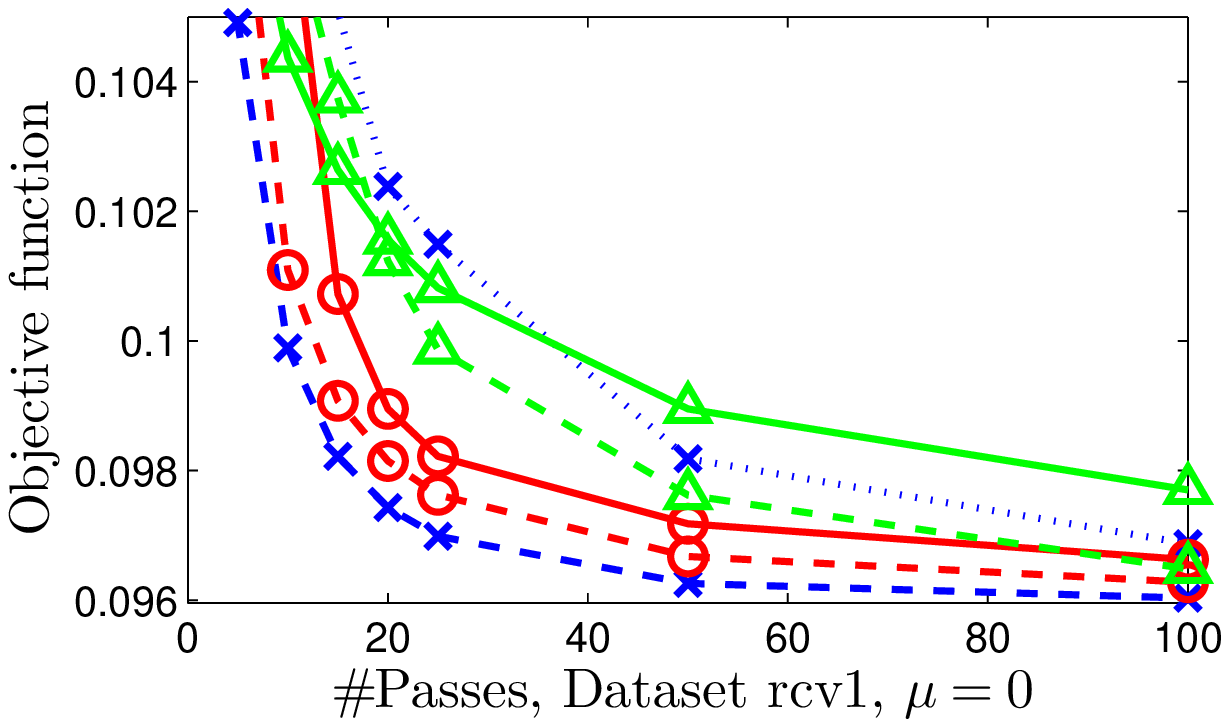} 
   \includegraphics[width=0.3\linewidth]{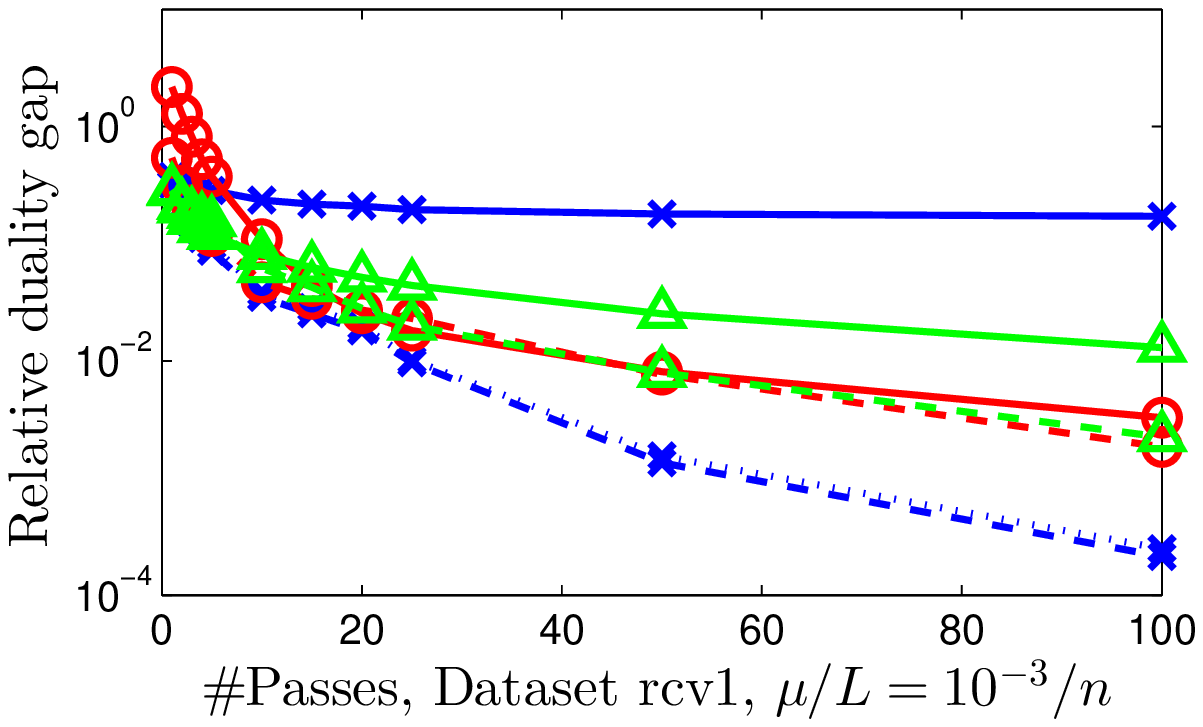} 
   \includegraphics[width=0.3\linewidth]{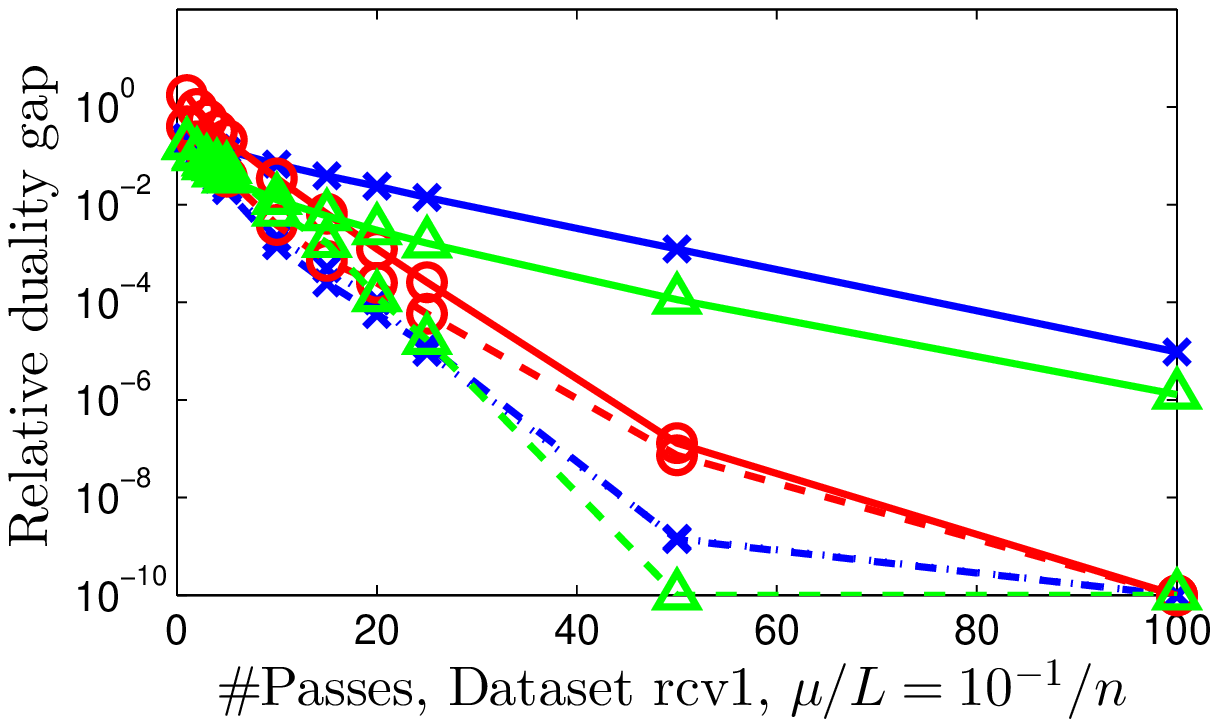} \\
   \includegraphics[width=0.3\linewidth]{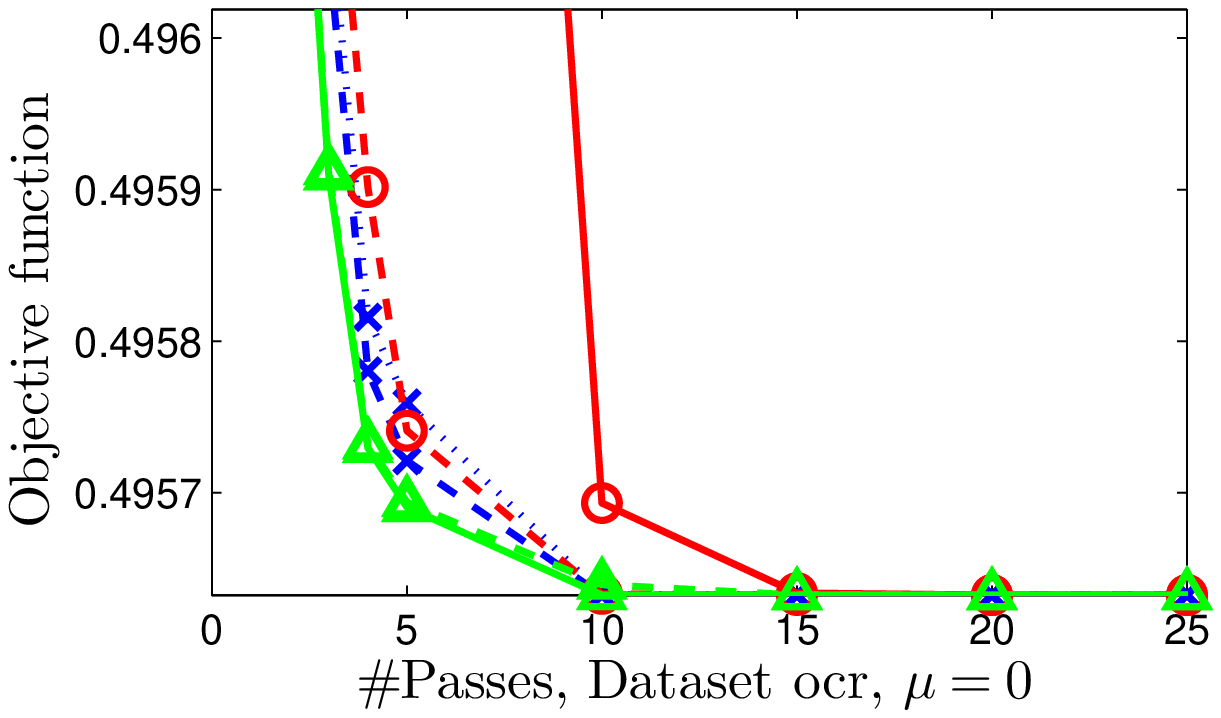} 
   \includegraphics[width=0.3\linewidth]{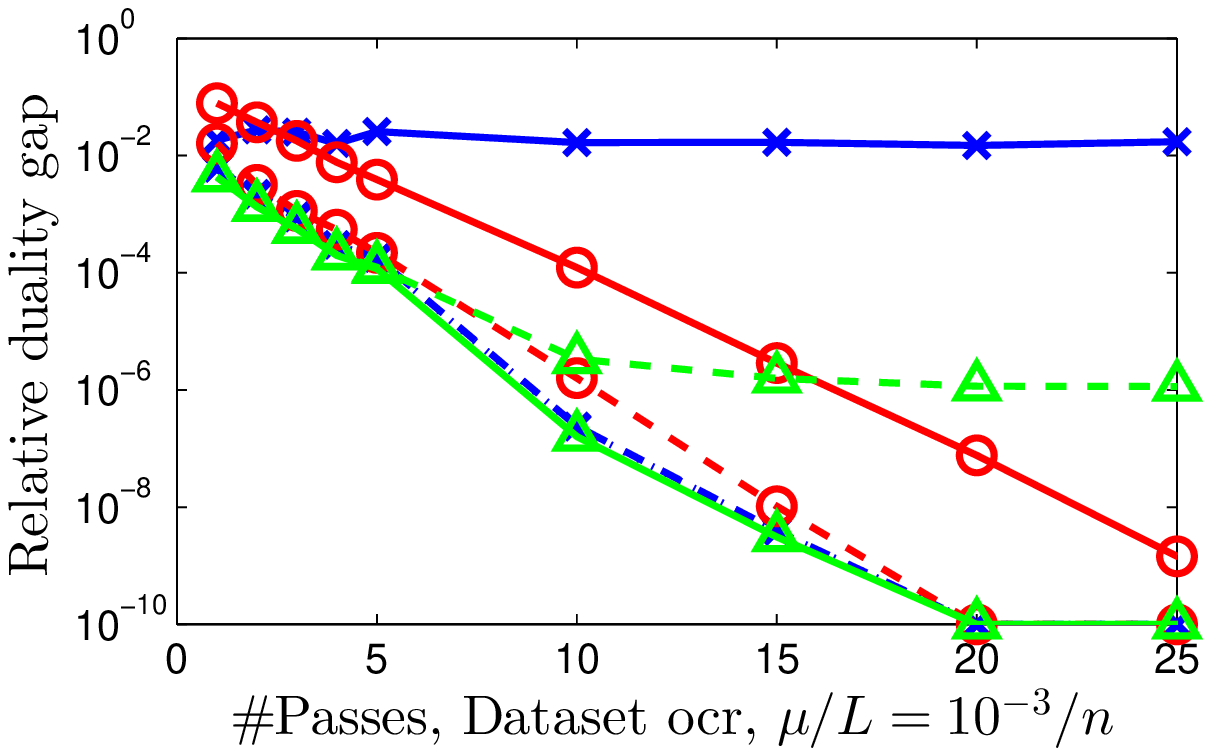} 
   \includegraphics[width=0.3\linewidth]{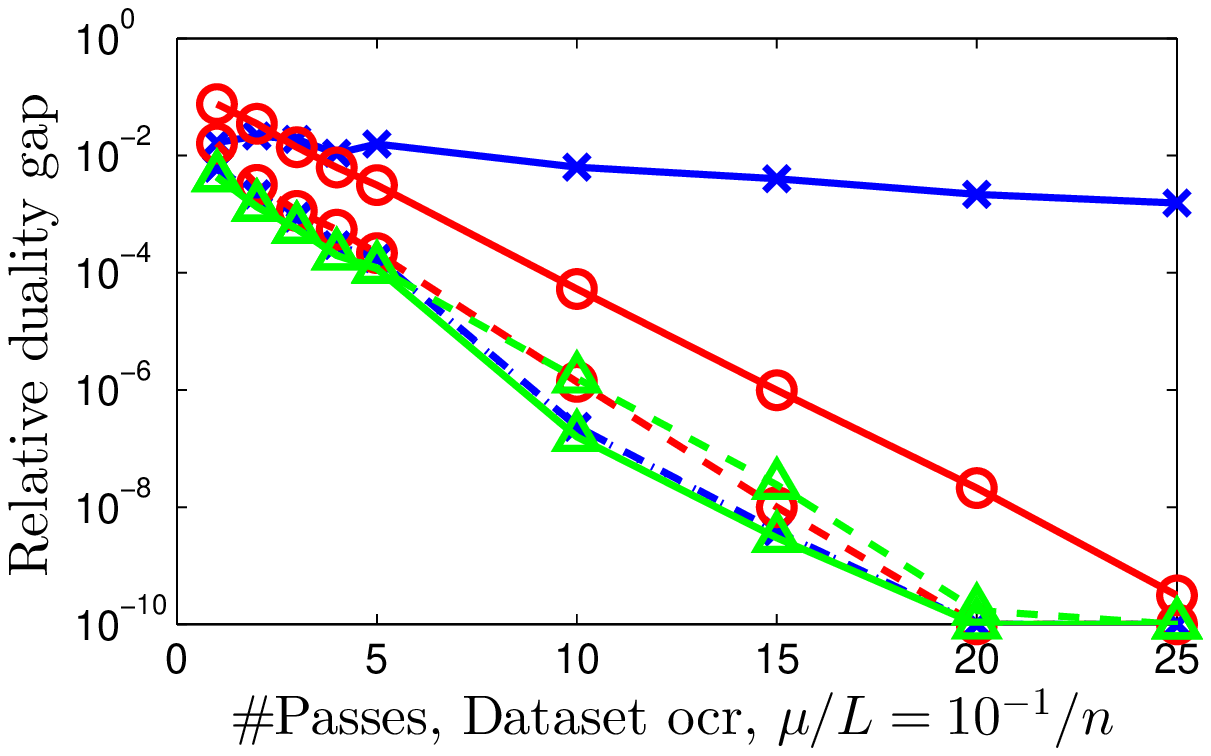} 
   \vs
   \vs
   \caption{Objective function value (or duality gap) for different number of passes performed over each dataset.  
   The legend for all curves is on the top right. AMISO, ASAGA, ASAG refer to the accelerated variants of MISO, SAGA, and SAG, respectively.}\label{table:exp}
   \vs
   \vs
\end{figure}

\vsb
\subsubsection*{Acknowledgments}
\vsb
This work was supported by ANR (MACARON ANR-14-CE23-0003-01),
MSR-Inria joint centre, CNRS-Mastodons program (Titan), 
and NYU Moore-Sloan Data Science Environment.

\newpage
{\small
\bibliographystyle{plain}
\bibliography{bib}

\begin{thebibliography}{10}

\bibitem{agarwal}
A.~Agarwal and L.~Bottou.
\newblock A lower bound for the optimization of finite sums.
\newblock In {\em Proc. International Conference on Machine Learning (ICML)},
  2015.

\bibitem{bach2012optimization}
F.~Bach, R.~Jenatton, J.~Mairal, and G.~Obozinski.
\newblock Optimization with sparsity-inducing penalties.
\newblock {\em Foundations and Trends in Machine Learning}, 4(1):1--106, 2012.

\bibitem{Bauschke:2011}
H.~H. Bauschke and P.~L. Combettes.
\newblock {\em Convex Analysis and Monotone Operator Theory in Hilbert Spaces}.
\newblock Springer, 2011.

\bibitem{fista}
A.~Beck and M.~Teboulle.
\newblock A fast iterative shrinkage-thresholding algorithm for linear inverse
  problems.
\newblock {\em SIAM Journal on Imaging Sciences}, 2(1):183--202, 2009.

\bibitem{bertsekas:2015}
D.~P. Bertsekas.
\newblock {\em Convex Optimization Algorithms}.
\newblock Athena Scientific, 2015.

\bibitem{saga}
A.~J. Defazio, F.~Bach, and S.~Lacoste{-}Julien.
\newblock {SAGA:} {A} fast incremental gradient method with support for
  non-strongly convex composite objectives.
\newblock In {\em Adv. Neural Information Processing Systems (NIPS)}, 2014.

\bibitem{finito}
A.~J. Defazio, T.~S. Caetano, and J.~Domke.
\newblock Finito: A faster, permutable incremental gradient method for big data
  problems.
\newblock In {\em Proc. International Conference on Machine Learning (ICML)},
  2014.

\bibitem{frostig}
R.~Frostig, R.~Ge, S.~M. Kakade, and A.~Sidford.
\newblock Un-regularizing: approximate proximal point algorithms for empirical
  risk minimization.
\newblock In {\em Proc. International Conference on Machine Learning (ICML)},
  2015.

\bibitem{guler:1992}
O.~G\"uler.
\newblock New proximal point algorithms for convex minimization.
\newblock {\em SIAM Journal on Optimization}, 2(4):649--664, 1992.

\bibitem{he2012accelerated}
B.~He and X.~Yuan.
\newblock An accelerated inexact proximal point algorithm for convex
  minimization.
\newblock {\em Journal of Optimization Theory and Applications},
  154(2):536--548, 2012.

\bibitem{hiriart1996convex}
J.-B. Hiriart-Urruty and C.~Lemar{\'e}chal.
\newblock {\em Convex Analysis and Minimization Algorithms I}.
\newblock Springer, 1996.

\bibitem{juditsky_nemirovsky_2012}
A.~Juditsky and A.~Nemirovski.
\newblock First order methods for nonsmooth convex large-scale optimization.
\newblock {\em Optimization for Machine Learning, MIT Press}, 2012.

\bibitem{conjugategradient}
G.~Lan.
\newblock An optimal randomized incremental gradient method.
\newblock {\em arXiv:1507.02000}, 2015.

\bibitem{miso}
J.~Mairal.
\newblock Incremental majorization-minimization optimization with application
  to large-scale machine learning.
\newblock {\em SIAM Journal on Optimization}, 25(2):829--855, 2015.

\bibitem{nemirovski}
A.~Nemirovski, A.~Juditsky, G.~Lan, and A.~Shapiro.
\newblock Robust stochastic approximation approach to stochastic programming.
\newblock {\em {SIAM} Journal on Optimization}, 19(4):1574--1609, 2009.

\bibitem{nesterov1983}
Y.~Nesterov.
\newblock A method of solving a convex programming problem with convergence
  rate {$O$(1/$k^2$)}.
\newblock {\em Soviet Mathematics Doklady}, 27(2):372--376, 1983.

\bibitem{nesterov}
Y.~Nesterov.
\newblock {\em Introductory Lectures on Convex Optimization: A Basic Course}.
\newblock Springer, 2004.

\bibitem{nesterov2012}
Y.~Nesterov.
\newblock Efficiency of coordinate descent methods on huge-scale optimization
  problems.
\newblock {\em SIAM Journal on Optimization}, 22(2):341--362, 2012.

\bibitem{nesterov2013gradient}
Y.~Nesterov.
\newblock Gradient methods for minimizing composite functions.
\newblock {\em Mathematical Programming}, 140(1):125--161, 2013.

\bibitem{Parikh13}
N.~Parikh and S.P. Boyd.
\newblock Proximal algorithms.
\newblock {\em Foundations and Trends in Optimization}, 1(3):123--231, 2014.

\bibitem{richtarik2014}
P.~Richt{\'a}rik and M.~Tak{\'a}{\v{c}}.
\newblock Iteration complexity of randomized block-coordinate descent methods
  for minimizing a composite function.
\newblock {\em Mathematical Programming}, 144(1-2):1--38, 2014.

\bibitem{salzo2012inexact}
S.~Salzo and S.~Villa.
\newblock Inexact and accelerated proximal point algorithms.
\newblock {\em Journal of Convex Analysis}, 19(4):1167--1192, 2012.

\bibitem{proxinexact}
M.~Schmidt, N.~Le Roux, and F.~Bach.
\newblock Convergence rates of inexact proximal-gradient methods for convex
  optimization.
\newblock In {\em Adv. Neural Information Processing Systems (NIPS)}, 2011.

\bibitem{sag}
M.~Schmidt, N.~Le Roux, and F.~Bach.
\newblock Minimizing finite sums with the stochastic average gradient.
\newblock {\em arXiv:1309.2388}, 2013.

\bibitem{sdca}
S.~Shalev-Shwartz and T.~Zhang.
\newblock Proximal stochastic dual coordinate ascent.
\newblock {\em arXiv:1211.2717}, 2012.

\bibitem{accsdca}
S.~Shalev-Shwartz and T.~Zhang.
\newblock Accelerated proximal stochastic dual coordinate ascent for
  regularized loss minimization.
\newblock {\em Mathematical Programming}, 2015.

\bibitem{proxsvrg}
L.~Xiao and T.~Zhang.
\newblock A proximal stochastic gradient method with progressive variance
  reduction.
\newblock {\em {SIAM} Journal on Optimization}, 24(4):2057--2075, 2014.

\bibitem{zhangxiao}
Y.~Zhang and L.~Xiao.
\newblock Stochastic primal-dual coordinate method for regularized empirical
  risk minimization.
\newblock In {\em Proc. International Conference on Machine Learning (ICML)},
  2015.

\end{thebibliography}
\nocite{conjugategradient}
}

\newpage
\appendix
In this appendix, Section~\ref{sec:estimatesequence} is devoted to the
construction of an object called \emph{estimate sequence}, originally
introduced by Nesterov~(see \cite{nesterov}), and introduce extensions
to deal with inexact minimization. This section contains a generic 
convergence result that will be used to prove the main theorems and
propositions of the paper in Section~\ref{sec:proofs}. Then, Section~\ref{sec:accel-factor-calculs}
is devoted to the computation of global convergence rates of accelerated algorithms,
Section~\ref{appendix:miso} presents in details the proximal MISO algorithm, and {Section~\ref{appendix:exp} gives some implementation details of the experiments}.

\section{Construction of the Approximate Estimate Sequence}\label{sec:estimatesequence}
The estimate sequence is a generic tool introduced by Nesterov for proving the
convergence of accelerated gradient-based algorithms.  We start by recalling
the definition given in~\cite{nesterov}.
\begin{defi}[\bfseries Estimate Sequence~\cite{nesterov}]~\label{def:estimate}\newline
   A pair of sequences $(\varphi_k)_{k \geq 0}$ and $(\lambda_k)_{k \geq 0}$,
   with $\lambda_k \geq 0$ and $\varphi_k: \Real^p \to \Real$, is called an \emph{estimate sequence} of function~$F$ if
   \begin{displaymath}
      \lambda_k \to 0,
   \end{displaymath}
   and for any~$x$ in~$\Real^p$ and all $k \geq 0$, we have
   \begin{displaymath}
      \varphi_k(x) \leq (1-\lambda_k)F(x) + \lambda_k \varphi_0(x).
   \end{displaymath}
\end{defi}
Estimate sequences are used for proving convergence rates thanks to the following lemma
\begin{lemma}[\bfseries Lemma 2.2.1 from~\cite{nesterov}]~\label{lemma:nest}\newline
   If for some sequence $(x_k)_{k \geq 0}$ we have
   \begin{displaymath}
      F(x_k) \leq \varphi_k^\star \defin \min_{x \in \Real^p} \varphi_k(x),
   \end{displaymath}
   for an estimate sequence~$(\varphi_k)_{k \geq 0}$ of~$F$, then
   \begin{displaymath}
      F(x_k) - F^\star \leq \lambda_k (\varphi_0(x^\star) - F^\star),
   \end{displaymath}
   where~$x^\star$ is a minimizer of~$F$.
\end{lemma}
The rate of convergence of~$F(x_k)$ is thus directly related to the convergence
rate of $\lambda_k$. Constructing estimate sequences is thus appealing, even
though finding the most appropriate one is not trivial for the
catalyst algorithm because of the approximate minimization of~$G_k$
in~(\ref{eq:approx}). In a nutshell, the main steps of our convergence analysis are
\begin{enumerate}
   \item define an ``approximate'' estimate sequence for~$F$ corresponding to
      Algorithm~\ref{alg:catalyst}---that is, finding a function~$\varphi$
      that almost satisfies Definition~\ref{def:estimate} up to the
      approximation errors~$\varepsilon_k$ made in~(\ref{eq:approx}) when
      minimizing~$G_k$, and control the way these errors sum up together.
   \item extend Lemma~\ref{lemma:nest} to deal with the approximation
      errors~$\varepsilon_k$ to derive a generic convergence rate for the sequence~$(x_k)_{k \geq 0}$.
\end{enumerate}
This is also the strategy proposed by G\"uler in~\cite{guler:1992} for his
inexact accelerated proximal point algorithm, which essentially differs from
ours in its stopping criterion. The estimate sequence we choose is also
different and leads to a more rigorous convergence proof. Specifically, we
prove in this section the following theorem:
\begin{thm}[\bfseries Convergence Result Derived from an Approximate Estimate Sequence]~\label{thm}\newline
Let us denote 
\begin{equation}
   \lambda_k = \prod_{i=0}^{k-1} (1- \alpha_i),  \label{eq:lambdak}
\end{equation}
where the~$\alpha_i$'s are defined in Algorithm~\ref{alg:catalyst}. Then, the sequence $(x_{k})_{k \geq 0}$ satisfies 
\begin{equation}\label{F}
F(x_{k}) -F^* \leqslant \lambda_{k} \left ( \sqrt{S_k}+ 2\sum_{i=1}^{k} \sqrt {\frac{\epsilon_i }{\lambda_{i}}} \right )^2,
\end{equation}
where~$F^\star$ is the minimum value of~$F$ and  
\begin{equation}
   S_k =  F(x_0)  -F^*+ \frac{\gamma_0}{2} \Vert x_0 - x^*\Vert^2+
\sum_{i=1}^{k}\frac{\epsilon_i}{\lambda_{i}}~~~~\text{where}~~~~~\gamma_0 =
\frac{\alpha_0\left((\kappa+\mu)\alpha_0-\mu\right)}{1-\alpha_0}, \label{eq:gamma0}
\end{equation}
where~$x^\star$ is a minimizer of~$F$.
\end{thm}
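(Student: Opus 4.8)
The plan is to follow Nesterov's estimate-sequence methodology (Definition~\ref{def:estimate} and Lemma~\ref{lemma:nest}), but with a sequence $(\varphi_k)$ that only \emph{approximately} respects the construction, and to close the argument with the error-accumulation lemma of~\cite{proxinexact}. First I would build the estimate sequence explicitly. Let $x_k^\star$ denote the exact minimizer of $G_k$, so that first-order optimality of $G_k$ gives $p_k \defin \kappa(y_{k-1} - x_k^\star) \in \partial F(x_k^\star)$. Strong convexity of $F$ then yields the lower bound $\ell_k(x) \defin F(x_k^\star) + \langle p_k, x - x_k^\star\rangle + \frac{\mu}{2}\|x - x_k^\star\|^2 \leq F(x)$. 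Starting from $\varphi_0(x) = F(x_0) + \frac{\gamma_0}{2}\|x - x_0\|^2$, I define $\varphi_k(x) = (1-\alpha_{k-1})\varphi_{k-1}(x) + \alpha_{k-1}\ell_k(x)$. A one-line induction using $\ell_k \leq F$ and $\lambda_k = (1-\alpha_{k-1})\lambda_{k-1}$ shows $\varphi_k(x) \leq (1-\lambda_k)F(x) + \lambda_k\varphi_0(x)$, so $(\varphi_k,\lambda_k)$ is a genuine estimate sequence for $F$. Since $\varphi_0$ is quadratic with curvature $\gamma_0$ and each $\ell_k$ has curvature $\mu$, every $\varphi_k$ is quadratic, $\varphi_k(x) = \varphi_k^\star + \frac{\gamma_k}{2}\|x - v_k\|^2$, with $\gamma_k = (1-\alpha_{k-1})\gamma_{k-1} + \alpha_{k-1}\mu$; I would first check that the stated $\gamma_0$ together with the recursion $\alpha_k^2 = (1-\alpha_k)\alpha_{k-1}^2 + q\alpha_k$ are exactly what maintains $\gamma_k = (\mu+\kappa)\alpha_{k-1}^2$ for all $k$.

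Next I would expand the quadratic and match coefficients to obtain the recursions for the center $v_k$ and the minimum value $\varphi_k^\star$, and observe that the extrapolation rule~(\ref{y_k}) is precisely the one aligning $y_k$ with the point at which the next lower bound is anchored. The heart of the argument is an inductive inequality controlling the gap $F(x_k) - \varphi_k^\star$. In the exact proximal-point case ($\varepsilon_k=0$, $x_k = x_k^\star$) this gap is nonpositive, i.e.\ $F(x_k) \leq \varphi_k^\star$, which is the classical prerequisite of Lemma~\ref{lemma:nest}; inexactness injects a defect. I would bound this defect by turning the stopping criterion $G_k(x_k) - G_k^\star \leq \varepsilon_k$ into $\|x_k - x_k^\star\|^2 \leq 2\varepsilon_k/(\mu+\kappa)$ via strong convexity of $G_k$, and then control the resulting cross terms (inner products against $p_k$ and against $v_k - x_k$) by Cauchy--Schwarz. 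This produces a recursion of the schematic form $\varphi_k^\star \geq F(x_k) - (\text{terms linear in }\sqrt{\varepsilon_k}\,\|x_k - v_k\|)$, where the square-root errors are the source of the $\sqrt{\varepsilon_i/\lambda_i}$ contributions in the final bound.

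Finally I would combine this with the estimate-sequence property evaluated at the minimizer, $\varphi_k^\star \leq \varphi_k(x^\star) \leq (1-\lambda_k)F^\star + \lambda_k\varphi_0(x^\star)$, which gives $\varphi_k^\star - F^\star \leq \lambda_k(\varphi_0(x^\star) - F^\star)$ with $\varphi_0(x^\star) - F^\star = F(x_0) - F^\star + \frac{\gamma_0}{2}\|x_0 - x^\star\|^2$. Dividing the accumulated inequality by $\lambda_k$ and setting $u_k \defin \sqrt{(F(x_k)-F^\star)/\lambda_k}$, I would reach a recursion of the form $u_k^2 \leq S_k + 2\sum_{i=1}^k \sqrt{\varepsilon_i/\lambda_i}\,u_i$, with $S_k$ exactly as in~(\ref{eq:gamma0}). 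The proof then closes by the elementary lemma of~\cite{proxinexact} (if $(S_k)$ is nondecreasing and $u_k^2 \leq S_k + \sum_i a_i u_i$, then $u_k \leq \sqrt{S_k} + \sum_i a_i$), applied with $a_i = 2\sqrt{\varepsilon_i/\lambda_i}$, giving $u_k \leq \sqrt{S_k} + 2\sum_{i=1}^k\sqrt{\varepsilon_i/\lambda_i}$ and hence~(\ref{F}) after multiplying back by $\lambda_k$.

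The step I expect to be the main obstacle is the inductive control of the defect $F(x_k) - \varphi_k^\star$: tracking exactly how the approximation error enters the $v_k$ and $\varphi_k^\star$ recursions, ensuring that each error contribution is bounded by a constant times $\sqrt{\varepsilon_k}\,u_k$ (so it matches the hypothesis of the summation lemma rather than piling up as an uncontrolled quadratic term), and keeping the constants tight enough to produce the stated factor $2$ and the clean form of $S_k$. The bookkeeping between the exact minimizer $x_k^\star$ used to define the lower bounds $\ell_k$ and the computed iterate $x_k$ used in the extrapolation is where the subtlety concentrates, and it is precisely this discrepancy that~\cite{guler:1992} left unaddressed.
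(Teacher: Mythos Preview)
Your plan diverges from the paper at the first design choice, and that divergence is where the real gap lies. You anchor the estimate sequence at the exact proximal points $x_k^\star$, which does make $(\varphi_k,\lambda_k)$ a genuine estimate sequence. The paper deliberately does \emph{not} do this: it anchors at the computed iterates $x_k$, defining $\phi_k(x)=(1-\alpha_{k-1})\phi_{k-1}(x)+\alpha_{k-1}[F(x_k)+\langle\kappa(y_{k-1}-x_k),x-x_k\rangle+\tfrac{\mu}{2}\|x-x_k\|^2]$. The payoff is that the center $v_k$ of this $\phi_k$ is a function of the $x_i$'s, and the algorithm's extrapolation rule~(\ref{y_k}) then satisfies \emph{exactly} the cancellation identity $x_{k-1}-y_{k-1}+\tfrac{\alpha_{k-1}\gamma_{k-1}}{\gamma_k}(v_{k-1}-y_{k-1})=0$ needed in the inductive step. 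With your sequence, $v_k$ depends on the unknown $x_i^\star$'s, so this identity fails for the $y_k$ actually used in Algorithm~\ref{alg:catalyst}; your claim that ``the extrapolation rule~(\ref{y_k}) is precisely the one aligning $y_k$\dots'' is therefore unjustified. This is exactly the defect of G\"uler's construction that the paper points out. The residual inner product that survives the induction then carries a factor $\|p_k\|=\kappa\|y_{k-1}-x_k^\star\|$, which is not a priori bounded, so it cannot be absorbed as a $\sqrt{\varepsilon_k}$-size term.

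The paper's fix is to accept an approximate sequence and prove a replacement lower bound (Lemma~\ref{lemma1.1}): $F(x)\geq F(x_k)+\langle\kappa(y_{k-1}-x_k),x-x_k\rangle+\tfrac{\mu}{2}\|x-x_k\|^2+(\kappa+\mu)\langle x_k-x_k^\star,x-x_k\rangle-\varepsilon_k$. The extra term is what enters the defect $\xi_k$, and when $\phi_k$ is evaluated at $x^\star$ these pieces combine into $-\alpha_{k-1}(\kappa+\mu)\langle x_k-x_k^\star,x^\star-v_k\rangle$, bounded by $\sqrt{2\varepsilon_k\gamma_k}\,\|x^\star-v_k\|$. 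Consequently the right variable for the summation lemma is $u_k=\sqrt{\gamma_k/(2\lambda_k)}\,\|x^\star-v_k\|$, not your $u_k=\sqrt{(F(x_k)-F^\star)/\lambda_k}$: it is $\|x^\star-v_k\|$ that appears on both sides of the recursion, while $F(x_k)-F^\star$ is recovered only at the very end because it also sits on the left of the master inequality.
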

Then, the theorem will be used with the following lemma from~\cite{nesterov} to
control the convergence rate of the sequence~$(\lambda_k)_{k \geq 0}$, whose
definition follows the classical use of estimate sequences~\cite{nesterov}.
This will provide us convergence rates both for the strongly convex and
non-strongly convex cases.
\begin{lemma}[\bfseries Lemma 2.2.4 from~\cite{nesterov}]~\label{lemma:lambda}\newline
   If in the quantity~$\gamma_0$ defined in~(\ref{eq:gamma0}) satisfies $\gamma_0 \geqslant \mu$, then the sequence~$(\lambda_k)_{k \geq 0}$ from~(\ref{eq:lambdak}) satisfies
\begin{equation} \label{lambda}
\lambda_k \leqslant \min \left \{  \left (1-\sqrt{q} \right )^k, \frac{4}{ \left (2 + k \sqrt{\frac{\gamma_0}{\kappa+\mu}} \right )^2}   \right \}.
\end{equation}
\end{lemma}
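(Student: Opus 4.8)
The plan is to introduce the standard auxiliary ``curvature'' sequence $(\gamma_k)_{k \geq 0}$ underlying Nesterov's estimate-sequence construction and to reduce both bounds in~(\ref{lambda}) to elementary monotonicity properties of this sequence. Concretely, I would define $\gamma_{k+1} = (1-\alpha_k)\gamma_k + \alpha_k \mu$ for $k \geq 0$, with $\gamma_0$ given by~(\ref{eq:gamma0}). Using the defining relation $\alpha_k^2 = (1-\alpha_k)\alpha_{k-1}^2 + q\alpha_k$ together with $q = \mu/(\kappa+\mu)$, a short induction then shows the identity $\gamma_{k+1} = (\kappa+\mu)\alpha_k^2$ for all $k \geq 0$; indeed, the specific value of $\gamma_0$ in~(\ref{eq:gamma0}) is exactly the one making the base case $\gamma_1 = (\kappa+\mu)\alpha_0^2$ hold, and the inductive step follows because $(\kappa+\mu)q = \mu$. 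This identity is the bridge between $\alpha_k$ and $\gamma_k$, and every estimate on $\lambda_k = \prod_{i=0}^{k-1}(1-\alpha_i)$ will be read off from it.

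For the linear bound $\lambda_k \leq (1-\sqrt q)^k$, I would first prove $\gamma_k \geq \mu$ for all $k$ by induction. The base case is the hypothesis $\gamma_0 \geq \mu$, and the inductive step is immediate: since $\alpha_k \in (0,1)$, the recursion gives $\gamma_{k+1} = (1-\alpha_k)\gamma_k + \alpha_k\mu \geq (1-\alpha_k)\mu + \alpha_k\mu = \mu$. Translating through $\gamma_{k+1} = (\kappa+\mu)\alpha_k^2$ yields $\alpha_k^2 \geq \mu/(\kappa+\mu) = q$, hence $\alpha_k \geq \sqrt q$ and $1-\alpha_k \leq 1-\sqrt q$ for every $k$; taking the product over $i = 0,\dots,k-1$ gives the first term in the minimum.

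For the sublinear bound, the key preliminary estimate is $\gamma_k \geq \lambda_k\gamma_0$, again by a one-line induction: $\lambda_0 = 1$ gives equality, and $\gamma_{k+1} = (1-\alpha_k)\gamma_k + \alpha_k\mu \geq (1-\alpha_k)\gamma_k \geq (1-\alpha_k)\lambda_k\gamma_0 = \lambda_{k+1}\gamma_0$. Combined with $\gamma_{k+1} = (\kappa+\mu)\alpha_k^2$ this yields the lower bound $\alpha_k \geq \sqrt{\gamma_0/(\kappa+\mu)}\,\sqrt{\lambda_{k+1}}$. I would then study the increments of $1/\sqrt{\lambda_k}$: writing $\lambda_{k+1} = (1-\alpha_k)\lambda_k$, so that $\lambda_k - \lambda_{k+1} = \alpha_k\lambda_k$, and using $\lambda_{k+1} \leq \lambda_k$ to bound $\sqrt{\lambda_k}+\sqrt{\lambda_{k+1}} \leq 2\sqrt{\lambda_k}$, a short manipulation gives $1/\sqrt{\lambda_{k+1}} - 1/\sqrt{\lambda_k} \geq \alpha_k/(2\sqrt{\lambda_{k+1}}) \geq \tfrac{1}{2}\sqrt{\gamma_0/(\kappa+\mu)}$. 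Telescoping from $0$ to $k-1$ and using $\lambda_0 = 1$ produces $1/\sqrt{\lambda_k} \geq 1 + \tfrac{k}{2}\sqrt{\gamma_0/(\kappa+\mu)}$, which rearranges to the second term in~(\ref{lambda}); taking the minimum of the two bounds finishes the proof.

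I expect the telescoping step to be the only delicate point. The per-step increment of $1/\sqrt{\lambda_k}$ must be bounded below by a positive constant, and this hinges on simultaneously exploiting the monotonicity $\lambda_{k+1} \leq \lambda_k$ (to absorb the $\sqrt{\lambda_k}+\sqrt{\lambda_{k+1}}$ factor) and the lower bound $\alpha_k \geq \sqrt{\gamma_0/(\kappa+\mu)}\,\sqrt{\lambda_{k+1}}$ coming from $\gamma_k \geq \lambda_k\gamma_0$. Everything else reduces to the two short inductions on $\gamma_k$, so the real content is packaging the $\alpha_k$ recursion into the $\gamma_k$ form at the outset, after which both terms in the minimum fall out almost mechanically.
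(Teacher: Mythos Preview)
Your argument is correct and is precisely the classical proof of Lemma~2.2.4 in Nesterov's monograph, which the paper cites directly rather than reproving. The paper itself does not supply a proof of this lemma; it simply invokes the result, and elsewhere (in the proof of Lemma~\ref{xik}) it independently establishes the identity $\gamma_{k+1}=(\kappa+\mu)\alpha_k^2$ that you also derive and use as the bridge between $\alpha_k$ and $\gamma_k$. So your proposal matches the intended source exactly.
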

We may now move to the proof of the theorem.
\subsection{Proof of Theorem~\ref{thm}}
   The first step is to construct an estimate sequence is typically 
   to find a sequence of lower bounds of~$F$. By calling~$x_k^*$ the
   minimizer of~$G_k$, the following one is used in~\cite{guler:1992}:
   \begin{lemma}[\bfseries Lower Bound for~$F$ near $x_k^*$]~\newline
For all $x$ in $\R^p$, 
\begin{equation}\label{xk*}
F(x) \geqslant F(x_k^*) + \langle \kappa (y_{k-1} - x_k^*), x- x_k^* \rangle + \frac{\mu}{2} \Vert x- x_k^* \Vert^2. 
\end{equation}
\end{lemma}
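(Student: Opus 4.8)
The plan is to establish the lower bound~(\ref{xk*}) by combining two ingredients: the strong convexity of~$F$ and the optimality condition for the auxiliary problem~(\ref{eq:approx}) at its \emph{exact} minimizer~$x_k^\star$. Since~$F$ is assumed~$\mu$-strongly convex, for every~$x$ in~$\R^p$ we have the standard inequality
\begin{equation*}
F(x) \geq F(x_k^\star) + \langle g, x - x_k^\star \rangle + \frac{\mu}{2}\|x - x_k^\star\|^2,
\end{equation*}
where~$g$ is any subgradient of~$F$ at~$x_k^\star$. The only task is then to identify~$g$ with the linear term~$\kappa(y_{k-1}-x_k^\star)$ appearing in~(\ref{xk*}).

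First I would write down the first-order optimality condition for~$x_k^\star$ as the minimizer of~$G_k(x) = F(x) + \frac{\kappa}{2}\|x - y_{k-1}\|^2$. Since~$G_k$ is strongly convex and~$x_k^\star$ minimizes it, the subdifferential~$\partial G_k(x_k^\star)$ contains zero. The quadratic term is differentiable with gradient~$\kappa(x_k^\star - y_{k-1})$, so the optimality condition reads
\begin{equation*}
0 \in \partial F(x_k^\star) + \kappa(x_k^\star - y_{k-1}),
\end{equation*}
which means that the vector~$g \defin \kappa(y_{k-1} - x_k^\star)$ is a valid subgradient of~$F$ at~$x_k^\star$, i.e. $g \in \partial F(x_k^\star)$. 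Substituting this particular~$g$ into the strong convexity inequality immediately yields~(\ref{xk*}).

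The main subtlety, rather than a genuine obstacle, is the handling of nonsmoothness: since~$\reg$ may be nondifferentiable, $F$ need not be differentiable at~$x_k^\star$, so I would work throughout with subgradients and the subdifferential characterization of optimality rather than with gradients. The $\mu$-strong convexity inequality for~$F$ holds with respect to \emph{any} subgradient, and the value~$\mu$ here is the strong convexity constant of~$F$ (which is zero in the non-strongly convex case, where the inequality degenerates to the usual convexity inequality and the argument still goes through). One should also note that~$x_k^\star$ is well-defined and unique because~$G_k$ is~$(\mu+\kappa)$-strongly convex hence has a unique minimizer. Apart from these points, the proof is a direct two-line combination of the optimality condition and strong convexity, and I would present it as such.
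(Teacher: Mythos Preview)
Your proof is correct. You use the $\mu$-strong convexity of~$F$ together with the first-order optimality condition for~$x_k^\star$ as minimizer of~$G_k$, identifying $\kappa(y_{k-1}-x_k^\star)$ as a subgradient of~$F$ at~$x_k^\star$. The paper takes a slightly different but equivalent path: it applies the $(\kappa+\mu)$-strong convexity of~$G_k$ directly at its minimizer, giving $G_k(x) \geq G_k(x_k^\star) + \frac{\kappa+\mu}{2}\|x-x_k^\star\|^2$, and then expands the quadratic terms in the definition of~$G_k$ to recover~(\ref{xk*}). The paper's route has the small advantage of never invoking subdifferential calculus explicitly (the zero-subgradient condition is absorbed into the quadratic growth inequality at the minimizer), which makes it marginally cleaner in the composite setting. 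Your route, on the other hand, is more transparent about \emph{why} the linear coefficient is $\kappa(y_{k-1}-x_k^\star)$: it is literally a subgradient of~$F$. Both arguments are two lines long and yield the same bound.
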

\begin{proof}
By strong convexity, $G_k(x) \geqslant G_k(x_k^*) + \frac{\kappa+\mu}{2} \Vert x - x_{k}^* \Vert^2$, which is equivalent to
$$ F(x) + \frac{\kappa}{2} \Vert x - y_k \Vert^2 \geqslant F(x_k^*) + \frac{\kappa}{2} \Vert x_k^* - y_{k-1} \Vert^2 + \frac{\kappa+\mu}{2} \Vert x-x_k^* \Vert^2.$$
After developing the quadratic terms, we directly obtain (\ref{xk*}).
\end{proof}

Unfortunately, the exact value $x_k^*$ is unknown in practice and the estimate
sequence of~\cite{guler:1992} yields in fact an algorithm where the definition
of the anchor point~$y_k$ involves the unknown quantity $x_k^*$ instead of the
approximate solutions $x_k$ and~$x_{k-1}$ as in~(\ref{y_k}), as also noted by
others~\cite{salzo2012inexact}. To obtain a rigorous proof of convergence for
Algorithm~\ref{alg:catalyst}, it is thus necessary to refine the analysis
of~\cite{guler:1992}. To that effect, we construct below a sequence of functions 
that approximately satisfies the definition of estimate sequences. Essentially, 
we replace in~(\ref{xk*}) the quantity~$x_k^*$ by~$x_k$ to obtain an approximate
lower bound, and control the error by using the condition $G_k(x_{k}) - G_k(x_k^*) \leqslant \epsilon_k$. 
This leads us to the following construction:
\begin{enumerate}
\item $\phi_0(x) = F(x_0) + \frac{\gamma_0}{2} \Vert x - x_0 \Vert^2$;
\item For $k \geqslant 1$, we set 
$$ \phi_{k}(x) = (1- \alpha_{k-1}) \phi_{k-1}(x) + \alpha_{k-1} [ F(x_{k}) + \langle \kappa (y_{k-1} - x_{k}), x-x_{k} \rangle+ \frac{\mu}{2} \Vert x - x_{k} \Vert^2 ],  $$
\end{enumerate}
where the value of~$\gamma_0$, given in~(\ref{eq:gamma0}) will be explained later.  Note that if one
replaces~$x_k$ by~$x_k^*$ in the above construction, it is easy to show that
$(\phi_k)_{k \geq 0}$ would be exactly an estimate sequence for~$F$ with the
relation~$\lambda_k$ given in~(\ref{eq:lambdak}).

Before extending Lemma~\ref{lemma:nest} to deal with the approximate sequence and
conclude the proof of the theorem, we need to characterize a few properties of
the sequence~$(\phi_k)_{k \geq 0}$. In particular, the functions~$\phi_k$ are quadratic
and admit a canonical form:
\begin{lemma}[\bfseries Canonical Form of the Functions~$\phi_k$]~\label{estimate}\newline
For all $k \geq 0$, $\phi_k$ can be written in the canonical form 
$$\phi_{k}(x) = \phi_{k}^* + \frac{\gamma_{k}}{2} \Vert x - v_{k} \Vert^2,$$ 
where the sequences $(\gamma_k)_{k \geq 0}$, $(v_k)_{k \geq 0}$, and $(\phi_k^*)_{ k \geq 0}$ are defined as follows
\begin{align} 
   \gamma_{k} = \,& (1- \alpha_{k-1}) \gamma_{k-1} + \alpha_{k-1} \mu, \label{eq:gammak} \\
   v_{k} = \, & \frac{1}{\gamma_{k}} \left((1-\alpha_{k-1}) \gamma_{k-1} v_{k-1} + \alpha_{k-1} \mu x_{k} - \alpha_{k-1} \kappa (y_{k-1} -x_{k})\right),\label{eq:vk} \\
\phi_{k}^* = \, & (1-\alpha_{k-1}) \phi_{k-1}^* + \alpha_{k-1} F(x_{k}) - \frac{\alpha_{k-1}^2}{2\gamma_{k}} \Vert \kappa (y_{k-1} -x_{k}) \Vert^2 \nonumber \\ 
                & + \frac{\alpha_{k-1} (1-\alpha_{k-1}) \gamma_{k-1}}{\gamma_{k}} \left ( \frac{\mu}{2} \Vert x_{k} - v_{k-1} \Vert^2 + \langle \kappa (y_{k-1} -x_{k}), v_{k-1} - x_{k} \rangle \right ), \label{eq:phikstar}
\end{align}
\end{lemma}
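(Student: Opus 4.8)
The plan is to prove the lemma by induction on $k$, mirroring the classical computation of Nesterov (Lemma 2.2.3 in~\cite{nesterov}) but with the gradient $\nabla f$ replaced throughout by the vector $g_k \defin \kappa(y_{k-1}-x_k)$, and with the quadratic curvature arising from the $\mu$-strong-convexity term rather than from a smoothness model. The base case $k=0$ is immediate: by definition $\phi_0(x)=F(x_0)+\frac{\gamma_0}{2}\|x-x_0\|^2$ is already in canonical form with $\phi_0^\star = F(x_0)$, $\gamma_0$ as given, and $v_0 = x_0$.

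For the inductive step I would assume $\phi_{k-1}(x)=\phi_{k-1}^\star + \frac{\gamma_{k-1}}{2}\|x-v_{k-1}\|^2$ and substitute it into the defining recursion for $\phi_k$. Since $\phi_k$ is the convex combination (with weights $1-\alpha_{k-1}$ and $\alpha_{k-1}$) of a quadratic with Hessian $\gamma_{k-1}I$ and of the function $x\mapsto F(x_k)+\langle g_k,x-x_k\rangle+\frac{\mu}{2}\|x-x_k\|^2$ with Hessian $\mu I$, it is itself quadratic with Hessian $\left[(1-\alpha_{k-1})\gamma_{k-1}+\alpha_{k-1}\mu\right] I$. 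This directly identifies $\gamma_k$ as in~\eqref{eq:gammak} and shows $\phi_k$ admits a canonical form; positivity $\gamma_k>0$ follows by induction from $\gamma_0>0$ together with $\alpha_{k-1}\in(0,1)$, so the minimizer is unique. I would then locate $v_k$ as the unique root of $\nabla\phi_k$: differentiating gives $\nabla\phi_k(x)=(1-\alpha_{k-1})\gamma_{k-1}(x-v_{k-1})+\alpha_{k-1}\left[g_k+\mu(x-x_k)\right]$, and setting this to zero while collecting the coefficient $\gamma_k$ of $x$ yields exactly~\eqref{eq:vk}.

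To obtain $\phi_k^\star=\phi_k(v_k)$ I would avoid expanding directly at $v_k$ and instead evaluate $\phi_k$ at the point $x_k$, where the linear and quadratic correction terms vanish, so that $\phi_k(x_k)=(1-\alpha_{k-1})\phi_{k-1}(x_k)+\alpha_{k-1}F(x_k)$. Matching this against the canonical form $\phi_k(x_k)=\phi_k^\star+\frac{\gamma_k}{2}\|x_k-v_k\|^2$ and substituting the rearrangement $\gamma_k(v_k-x_k)=(1-\alpha_{k-1})\gamma_{k-1}(v_{k-1}-x_k)-\alpha_{k-1}g_k$ (obtained from~\eqref{eq:vk} using~\eqref{eq:gammak}) expresses $\phi_k^\star$ entirely in terms of $\phi_{k-1}^\star$, $F(x_k)$, $\|x_k-v_{k-1}\|^2$, $\langle g_k,v_{k-1}-x_k\rangle$, and $\|g_k\|^2$.

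The main obstacle is the bookkeeping in this last step. After inserting the expanded $\|x_k-v_k\|^2$ one must combine the two contributions proportional to $\|x_k-v_{k-1}\|^2$, and the key simplification is that their coefficient collapses via the identity $1-\frac{(1-\alpha_{k-1})\gamma_{k-1}}{\gamma_k}=\frac{\alpha_{k-1}\mu}{\gamma_k}$, which is precisely the $\gamma_k$-recursion~\eqref{eq:gammak}. Carrying the cross term $\langle g_k,v_{k-1}-x_k\rangle$ and the term $\|g_k\|^2=\|\kappa(y_{k-1}-x_k)\|^2$ through the same division by $\gamma_k$ then reproduces the three summands of~\eqref{eq:phikstar}. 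Once the algebra is organized in this order the identity follows, which closes the induction and establishes the canonical form.
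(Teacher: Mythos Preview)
Your proposal is correct and follows essentially the same approach as the paper: both identify $\gamma_k$ from the Hessian of the recursion, obtain $v_k$ from the first-order optimality condition $\nabla\phi_k=0$, and compute $\phi_k^\star$ by evaluating the recursion at $x=x_k$ and then subtracting $\frac{\gamma_k}{2}\|x_k-v_k\|^2$ using the rearranged expression for $v_k-x_k$. The paper's derivation is organized slightly differently (it does not frame the argument as an explicit induction, and it uses~\eqref{eq:gammak} at the very end rather than highlighting the identity $1-\tfrac{(1-\alpha_{k-1})\gamma_{k-1}}{\gamma_k}=\tfrac{\alpha_{k-1}\mu}{\gamma_k}$), but the substance is identical.
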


\begin{proof}
   We have for all~$k \geq 1$ and all~$x$ in $\Real^p$,
\begin{equation}\label{estimate sequence}
   \begin{split}
 \phi_{k}(x)
 = \,\,&(1-\alpha_{k-1}) \left(\phi_{k-1}^* + \frac{\gamma_{k-1}}{2} \Vert x - v_{k-1} \Vert^2\right) \\
       & ~~~~~~+ \alpha_{k-1} \left(F(x_{k}) + \langle \kappa (y_{k-1} - x_{k}), x-x_{k} \rangle+ \frac{\mu}{2} \Vert x - x_{k} \Vert^2 \right)  \\
 = \,\, &\phi_{k}^* + \frac{\gamma_{k}}{2} \Vert x - v_{k} \Vert^2.
 \end{split}
\end{equation}
Differentiate twice the relations (\ref{estimate sequence}) gives us directly~(\ref{eq:gammak}).
Since $v_k$ minimizes~$\phi_k$, the optimality condition $\nabla \phi_{k} (v_k) = 0 $ gives
\begin{align*}
   (1-\alpha_{k-1}) \gamma_{k-1} (v_k-v_{k-1}) + \alpha_{k-1} \left( \kappa (y_{k-1}-x_{k}) + \mu (v_k-x_{k}) \right) = 0, 
\end{align*}
and then we obtain~(\ref{eq:vk}).
Finally, apply $x= x_k$ to (\ref{estimate sequence}), which yields
\begin{displaymath}
   \phi_{k}(x_{k}) =  (1- \alpha_{k-1}) \left(\phi_{k-1}^*+ \frac{\gamma_{k-1}}{2} \Vert x_{k} - v_{k-1} \Vert^2  \right) + \alpha_{k-1} F(x_{k}) = \phi_{k}^* + \frac{\gamma_{k}}{2} \Vert x_{k} - v_{k} \Vert^2.
\end{displaymath}
Consequently, 
\begin{equation} \label{phi*}
\phi_{k}^* =  (1- \alpha_{k-1}) \phi_{k-1}^*  + \alpha_{k-1} F(x_{k})+ (1-\alpha_{k-1})\frac{\gamma_{k-1}}{2} \Vert x_{k} -v_{k-1} \Vert^2 - \frac{\gamma_{k}}{2} \Vert x_{k} - v_{k} \Vert^2 
\end{equation}
Using the expression of $v_k$ from~(\ref{eq:vk}), we have 
$$ v_{k} - x_{k}  = \frac{1}{\gamma_{k}} \left((1-\alpha_{k-1}) \gamma_{k-1} (v_{k-1} - x_{k}) - \alpha_{k-1} \kappa (y_{k-1} - x_{k})\right).$$
Therefore
\begin{equation*}
   \begin{split}
      \frac{\gamma_{k}}{2} \Vert x_{k} - v_{k} \Vert^2  & =  \frac{(1-\alpha_{k-1})^2 \gamma_{k-1}^2}{2\gamma_{k}} \Vert x_{k} -v_{k-1} \Vert^2 \\ 
                                                        & - \frac{(1-\alpha_{k-1})\alpha_{k-1} \gamma_{k-1}}{\gamma_{k}} \langle v_{k-1}- x_{k}, \kappa(y_{k-1}-x_{k}) \rangle + \frac{\alpha_{k-1}^2}{2\gamma_{k}} \Vert \kappa(y_{k-1}-x_{k})\Vert^2.
   \end{split}
\end{equation*}
It remains to plug this relation into (\ref{phi*}), use once~(\ref{eq:gammak}), and we obtain the formula~(\ref{eq:phikstar}) for $\phi_{k}^*$.
\end{proof}
We may now start analyzing the errors~$\varepsilon_k$ to control how far is the
sequence~$(\phi_k)_{k \geq 0}$ from an exact estimate sequence.
For that, we need to understand the effect of replacing~$x_k^*$ by~$x_k$ in the lower bound~(\ref{xk*}). 
The following lemma will be useful for that purpose.
\begin{lemma}[\bfseries Controlling the Approximate Lower Bound of~$F$]\label{lemma1.1}~\newline
   If $G_k(x_{k}) - G_k(x_k^*) \leqslant \epsilon_k$, then for all $x$ in $\R^p$,
\begin{equation} \label{eq3}
F(x) \geqslant F(x_{k}) + \langle \kappa (y_{k-1} - x_{k}), x - x_{k} \rangle + \frac{\mu}{2} \Vert x-x_{k} \Vert^2 + (\kappa+\mu) \langle x_{k} - x_{k}^*, x-x_{k} \rangle - \epsilon_k.
\end{equation}
\end{lemma}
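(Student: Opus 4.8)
The plan is to start from the exact lower bound~(\ref{xk*}), which holds for the true minimizer $x_k^*$ of $G_k$, and then quantify the error incurred by replacing $x_k^*$ with the approximate solution $x_k$. The bound~(\ref{xk*}) reads
$$F(x) \geqslant F(x_k^*) + \langle \kappa(y_{k-1} - x_k^*), x - x_k^* \rangle + \frac{\mu}{2}\Vert x - x_k^*\Vert^2.$$
First I would expand both the inner product and the quadratic term around $x_k$ by writing $x_k^* = x_k + (x_k^* - x_k)$. The goal is to reorganize this expression so that it matches the right-hand side of~(\ref{eq3}), which is stated in terms of $x_k$, plus correction terms that can be bounded using the error $\varepsilon_k$.

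The key algebraic step is to track how each piece transforms. The term $F(x_k^*)$ should be compared to $F(x_k)$: since $x_k^*$ minimizes the $(\kappa+\mu)$-strongly convex function $G_k$, and $G_k(x_k) - G_k(x_k^*) \leqslant \varepsilon_k$, strong convexity gives $\frac{\kappa+\mu}{2}\Vert x_k - x_k^*\Vert^2 \leqslant \varepsilon_k$, controlling the distance $\Vert x_k - x_k^*\Vert$. The linear term $\langle \kappa(y_{k-1} - x_k^*), x - x_k^*\rangle$ should be split into the target term $\langle \kappa(y_{k-1} - x_k), x - x_k\rangle$ plus cross-terms involving $x_k - x_k^*$; similarly the quadratic term $\frac{\mu}{2}\Vert x - x_k^*\Vert^2$ expands into $\frac{\mu}{2}\Vert x - x_k\Vert^2$ plus cross-terms. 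After collecting, the cross-terms linear in $(x - x_k)$ should combine into the single expression $(\kappa+\mu)\langle x_k - x_k^*, x - x_k\rangle$ appearing in~(\ref{eq3}); this is where the specific coefficient $\kappa + \mu$ arises naturally from summing the $\kappa$-contribution of the inner product and the $\mu$-contribution of the quadratic.

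The remaining leftover terms will be those that do not depend on $x$ — namely quadratic expressions in $x_k - x_k^*$ and the inner product $\langle \kappa(y_{k-1} - x_k^*), x_k - x_k^*\rangle$ type quantities. The final step is to show that these leftover terms are bounded below by $-\varepsilon_k$. I expect this to be the main obstacle: one must verify that the collected residual, after using the optimality condition $\nabla G_k(x_k^*) = 0$ (equivalently $\nabla F$-type relations at $x_k^*$), reduces precisely to $-\big(G_k(x_k) - G_k(x_k^*)\big) \geqslant -\varepsilon_k$. Concretely, I anticipate that the residual equals $F(x_k^*) - F(x_k) + \frac{\kappa}{2}\Vert x_k^* - y_{k-1}\Vert^2 - \frac{\kappa}{2}\Vert x_k - y_{k-1}\Vert^2$-style terms, which is exactly $G_k(x_k^*) - G_k(x_k) \geqslant -\varepsilon_k$ once the $\frac{\kappa}{2}$ and $\frac{\mu}{2}$ pieces are recombined. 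Getting the bookkeeping of the quadratic completions exactly right, so that everything telescopes into $G_k(x_k^*) - G_k(x_k)$, is the delicate part; the rest is routine expansion.
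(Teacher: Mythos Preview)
Your approach is correct and leads to the same inequality, but it takes a different entry point than the paper. The paper does \emph{not} start from the previously derived bound~(\ref{xk*}); instead it goes back one level and uses the $(\kappa+\mu)$-strong convexity of $G_k$ directly: $G_k(x)\geq G_k^\star+\tfrac{\kappa+\mu}{2}\|x-x_k^*\|^2$, substitutes $G_k^\star\geq G_k(x_k)-\varepsilon_k$, expands $\|x-x_k^*\|^2=\|(x-x_k)+(x_k-x_k^*)\|^2$ and simply drops the nonnegative term $\tfrac{\kappa+\mu}{2}\|x_k-x_k^*\|^2$, then rewrites $G_k(x_k)+\tfrac{\kappa}{2}\|x-x_k\|^2-\tfrac{\kappa}{2}\|x-y_{k-1}\|^2=F(x_k)+\langle\kappa(y_{k-1}-x_k),x-x_k\rangle$ by an elementary identity. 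This is shorter because the single expansion of $\|x-x_k^*\|^2$ already produces the coefficient $(\kappa+\mu)$ in front of the cross term, and the only discarded quantity is manifestly nonnegative.

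Your route---starting from~(\ref{xk*}) and perturbing $x_k^*\to x_k$ in each of its three occurrences---also works, but the bookkeeping is heavier and your anticipated residual is not quite right. When you collect the $x$-independent leftovers and compare $F(x_k^*)$ to $F(x_k)$ via $G_k$, you do not get exactly $G_k(x_k^*)-G_k(x_k)$; you get $G_k(x_k^*)-G_k(x_k)+\tfrac{\kappa+\mu}{2}\|x_k-x_k^*\|^2$. The extra term has the right sign, so the bound $\geq -\varepsilon_k$ still follows, but the telescoping is not exact as you predicted. This is harmless for the proof, just worth correcting in your writeup.
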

\begin{proof}
   By strong convexity, for
   all $x$ in~$\Real^p$,
$$ G_k(x) \geqslant G_k^* + \frac{\kappa+\mu}{2} \Vert x - x_{k}^* \Vert^2,$$
where~$G_k^\star$ is the minimum value of~$G_k$.
Replacing~$G_k$ by its definition~(\ref{eq:approx}) gives 
\begin{align*}
F(x)  \geqslant & \,\,  G_k^* + \frac{\kappa+\mu}{2} \Vert x - x_{k}^* \Vert^2 - \frac{\kappa}{2} \Vert x - y_{k-1} \Vert^2 \\
	= & \,\,G_k(x_{k}) +  (G_k^*-G_k(x_{k})) + \frac{\kappa + \mu}{2}  \Vert x - x_{k}^* \Vert^2 - \frac{\kappa}{2} \Vert x - y_{k-1} \Vert^2 \\
	\geqslant & \,\,G_k(x_{k})  - \epsilon_k + \frac{\kappa + \mu}{2} \Vert (x - x_{k}) + (x_{k} -x_{k}^*) \Vert^2  - \frac{\kappa}{2} \Vert x - y_{k-1} \Vert^2 \\
	\geq & \,\, G_k(x_{k})  - \epsilon_k + \frac{\kappa + \mu}{2} \Vert x - x_{k} \Vert^2 - \frac{\kappa}{2} \Vert x - y_{k-1} \Vert^2 + (\kappa+\mu) \langle x_{k} - x_{k}^*, x-x_{k} \rangle. 
\end{align*}
We conclude by noting that
\begin{displaymath}
   \begin{split}
      G_k(x_{k}) \!+\! \frac{\kappa}{2} \Vert x - x_{k} \Vert^2 \!-\! \frac{\kappa}{2} \Vert x - y_{k-1} \Vert^2 & = \!
  F(x_{k}) \!+ \!\frac{\kappa}{2} \Vert x_{k} -y_{k-1} \Vert^2 \!+ \!\frac{\kappa}{2} \Vert x - x_{k} \Vert^2 \!- \!\frac{\kappa}{2} \Vert x - y_{k-1} \Vert^2 \\
  & =  F(x_{k}) + \langle \kappa (y_{k-1} - x_{k}), x - x_{k} \rangle.
 \end{split}
\end{displaymath}
\end{proof}
We can now show that Algorithm~\ref{alg:catalyst} generates iterates~$(x_k)_{k \geq 0}$ that
approximately satisfy the condition of Lemma~\ref{lemma:nest} from Nesterov~\cite{nesterov}.
\begin{lemma}[\bfseries Relation between~$(\phi_k)_{k \geq 0}$ and Algorithm~\ref{alg:catalyst}]\label{xik}~\newline
   Let $\phi_k$ be the estimate sequence constructed above. Then, Algorithm~\ref{alg:catalyst} generates iterates~$(x_k)_{k \geq 0}$ such that
$$ F(x_{k}) \leqslant \phi_{k}^* + \xi_{k}, $$
where the sequence $(\xi_k)_{k \geq 0}$ is defined by $\xi_0 = 0$ and
$$ \xi_{k} = (1-\alpha_{k-1}) (\xi_{k-1} + \epsilon_k - (\kappa+\mu) \langle x_{k} - x_{k}^*, x_{k-1}-x_{k} \rangle  ). $$
\end{lemma}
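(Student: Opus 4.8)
The plan is to establish $F(x_k)\le \phi_k^*+\xi_k$ by induction on $k$. For the base case $k=0$, the function $\phi_0(x)=F(x_0)+\frac{\gamma_0}{2}\|x-x_0\|^2$ is minimized at $x_0$, so $\phi_0^*=F(x_0)$, and since $\xi_0=0$ the claimed inequality holds with equality. For the inductive step I would assume $F(x_{k-1})\le \phi_{k-1}^*+\xi_{k-1}$ and start from the canonical expression for $\phi_k^*$ derived in Lemma~\ref{estimate}, equation~(\ref{eq:phikstar}). The two ``past'' quantities it contains are then lower-bounded in turn: first replace $\phi_{k-1}^*$ by the induction hypothesis $\phi_{k-1}^*\ge F(x_{k-1})-\xi_{k-1}$, and then replace $F(x_{k-1})$ using the inexact lower bound of Lemma~\ref{lemma1.1} (which applies because $x_k$ meets the stopping criterion $G_k(x_k)-G_k^\star\le \epsilon_k$), evaluated at the particular point $x=x_{k-1}$.

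After these two substitutions, the convex combination $(1-\alpha_{k-1})F(x_k)+\alpha_{k-1}F(x_k)$ collapses to $F(x_k)$. I would then check that the terms carrying $\epsilon_k$, $\xi_{k-1}$, and the inner product $(\kappa+\mu)\langle x_k-x_k^*,\,x_{k-1}-x_k\rangle$ assemble, after factoring out $(1-\alpha_{k-1})$, into exactly $-\xi_k$ by the very definition of the sequence $(\xi_k)_{k\ge 0}$. What remains is a residual $R$ gathering the cross terms in $\kappa(y_{k-1}-x_k)$ (one paired with $x_{k-1}-x_k$, one with $v_{k-1}-x_k$), the nonnegative curvature terms $\tfrac{\mu}{2}\|\cdot\|^2$, and the single negative term $-\tfrac{\alpha_{k-1}^2\kappa^2}{2\gamma_k}\|y_{k-1}-x_k\|^2$ inherited from~(\ref{eq:phikstar}); the whole step reduces to proving $R\ge 0$.

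Discarding the manifestly nonnegative $\mu$-terms, proving $R\ge 0$ amounts to completing a square against the negative quadratic, which succeeds precisely when the coefficient vector multiplying $\kappa(y_{k-1}-x_k)$ is itself proportional to $y_{k-1}-x_k$, that is when
\begin{equation*}
(1-\alpha_{k-1})(x_{k-1}-x_k)+\frac{\alpha_{k-1}(1-\alpha_{k-1})\gamma_{k-1}}{\gamma_k}(v_{k-1}-x_k)=\frac{\alpha_{k-1}^2\kappa}{\gamma_k}(y_{k-1}-x_k).
\end{equation*}
This identity is where the specific choices of Algorithm~\ref{alg:catalyst} enter, and verifying it is the main obstacle. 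Matching the coefficients of $x_k$ on both sides requires the scalar relation $(1-\alpha_{k-1})(\gamma_{k-1}+\alpha_{k-1}\mu)=\alpha_{k-1}^2\kappa$, equivalently $\gamma_k=(\mu+\kappa)\alpha_{k-1}^2$, which I would prove by induction from the initialization of $\gamma_0$ in~(\ref{eq:gamma0}), the recursion~(\ref{eq:gammak}), and the defining equation $\alpha_k^2=(1-\alpha_k)\alpha_{k-1}^2+q\alpha_k$ with $q=\mu/(\mu+\kappa)$.

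Matching the remaining part then reduces to the extrapolation identity $y_{k-1}=(\gamma_k x_{k-1}+\alpha_{k-1}\gamma_{k-1}v_{k-1})/(\gamma_{k-1}+\alpha_{k-1}\mu)$, which recasts the extrapolation step~(\ref{y_k}) in Nesterov's ``$v$-form''. I expect this to be the most delicate piece: I would establish it by a second induction, feeding the already-known form of $y_{k-2}$ into the recursion~(\ref{eq:vk}) for $v_{k-1}$ and simplifying using the explicit value of $\beta_{k-1}$ from~(\ref{y_k}) together with $\gamma_k=(\mu+\kappa)\alpha_{k-1}^2$. Once both the scalar and the vector identities are in hand, the cross terms and the negative quadratic in $R$ combine to $\tfrac{\alpha_{k-1}^2\kappa^2}{2\gamma_k}\|y_{k-1}-x_k\|^2\ge 0$, so that $R\ge 0$, which closes the induction and proves the lemma.
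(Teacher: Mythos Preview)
Your proposal is correct and follows essentially the same route as the paper: induction on $k$, substituting the induction hypothesis and Lemma~\ref{lemma1.1} into the canonical form~(\ref{eq:phikstar}), and then reducing the nonnegativity of the residual to the two identities $\gamma_k=(\kappa+\mu)\alpha_{k-1}^2$ and the ``$v$-form'' extrapolation relation, both of which the paper proves in the same way you outline. The only cosmetic difference is that the paper recenters the cross terms at $y_{k-1}$ so that the linear part vanishes exactly (equation~(\ref{recurrence})) and a nonnegative quadratic remains, whereas you keep them centered at $x_k$ and complete the square; the two computations are equivalent.
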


\begin{proof}
We proceed by induction. For $k=0$, $\phi_0^* = F(x_0)$ and $\xi_0 =0$. \\ 
Assume now that $ F(x_{k-1}) \leqslant \phi_{k-1}^* + \xi_{k-1} $. 
Then, 
\begin{equation*}
   \begin{split}
      \phi_{k-1}^\star  & \geq F(x_{k-1}) - \xi_{k-1} \\
                        & \geq F(x_{k}) + \langle \kappa (y_{k-1} - x_{k}), x_{k-1} - x_{k} \rangle + (\kappa+\mu) \langle x_{k} - x_{k}^*, x_{k-1}-x_{k} \rangle - \epsilon_k - \xi_{k-1} \\
      & = F(x_{k}) + \langle \kappa (y_{k-1} - x_{k}), x_{k-1} - x_{k} \rangle - \xi_k / (1-\alpha_{k-1}),
   \end{split}
\end{equation*}
where the second inequality is due to (\ref{eq3}).
By Lemma~\ref{estimate}, we now have,
\begin{equation*}
\begin{split}
   \phi_{k}^*  = &\,\, (1-\alpha_{k-1}) \phi_{k-1}^* + \alpha_{k-1} F(x_{k}) - \frac{\alpha_{k-1}^2}{2\gamma_{k}} \Vert \kappa (y_{k-1} -x_{k}) \Vert^2 \\
& ~~~~+ \frac{\alpha_{k-1} (1-\alpha_{k-1}) \gamma_{k-1}}{\gamma_{k}} \left ( \frac{\mu}{2} \Vert x_{k} - v_{k-1} \Vert^2 + \langle \kappa (y_{k-1} -x_{k}), v_{k-1} - x_{k} \rangle \right ) \\
   \geqslant & \,\,(1-\alpha_{k-1}) \left ( F(x_{k}) + \langle \kappa (y_{k-1} - x_{k}), x_{k-1} - x_{k} \rangle\right ) - \xi_k + \alpha_{k-1} F(x_{k})\\
 & ~~~~ - \frac{\alpha_{k-1}^2}{2\gamma_{k}} \Vert \kappa (y_{k-1} -x_{k}) \Vert^2 + \frac{\alpha_{k-1} (1-\alpha_{k-1}) \gamma_{k-1}}{\gamma_{k}}  \langle \kappa (y_{k-1}-x_{k}), v_{k-1} - x_{k} \rangle. \\
 =&  \,\, 	F(x_{k}) + (1-\alpha_{k-1}) \langle \kappa (y_{k-1} - x_{k}), x_{k-1} - x_{k}+ \frac{\alpha_{k-1} \gamma_{k-1}}{\gamma_{k}} (v_{k-1} - x_{k}) \rangle \\ 
  & ~~~~- \frac{\alpha_{k-1}^2}{2\gamma_{k}} \Vert \kappa (y_{k-1} -x_{k}) \Vert^2 -\xi_k \\
   =& \,\,  F(x_{k}) + (1-\alpha_{k-1}) \langle \kappa (y_{k-1} - x_{k}), x_{k-1}- y_{k-1}+\frac{\alpha_{k-1} \gamma_{k-1}}{\gamma_{k}} ( v_{k-1} - y_{k-1}) \rangle \\
&~~~~+ \left (1  -\frac{ (\kappa+2\mu) \alpha_{k-1}^2}{2\gamma_{k}} \right) \kappa \Vert (y_{k-1} -x_{k}) \Vert^2 -\xi_k. 
   \end{split}
\end{equation*}
We now need to show that the choice of the sequences~$(\alpha_k)_{k \geq 0}$ and~$(y_k)_{k\geq 0}$ will cancel all the terms involving $y_{k-1}-x_k$. In other words,
we want to show that 
\begin{equation} \label{recurrence}
 x_{k-1}- y_{k-1}+\frac{\alpha_{k-1} \gamma_{k-1}}{\gamma_{k}} ( v_{k-1} - y_{k-1}) = 0,
\end{equation} 
and we want to show that
\begin{equation}\label{alphak2}
1-  (\kappa + \mu)\frac{\alpha_{k-1}^2}{\gamma_{k}} = 0,
\end{equation}
which will be sufficient to conclude that $\phi_{k}^* + \xi_{k} \geqslant F(x_{k})$. 
The relation~(\ref{alphak2}) can be obtained from the definition of~$\alpha_k$ in~(\ref{y_k}) and the form of~$\gamma_k$ given in~(\ref{eq:gammak}). We have
indeed from~(\ref{y_k}) that
\begin{align*}
   (\kappa+\mu) \alpha_{k}^2 = (1-\alpha_{k}) (\kappa+\mu)\alpha_{k-1}^2 + \alpha_{k} \mu.
\end{align*}
Then, the quantity $(\kappa+\mu) \alpha_{k}^2$ follows the same recursion as $\gamma_{k+1}$ in~(\ref{eq:gammak}). Moreover, we have 
\begin{displaymath}
   \gamma_1 = (1-\alpha_0)\gamma_0 + \mu \alpha_0 = (\kappa+\mu)\alpha_0^2,
\end{displaymath}
from the definition of~$\gamma_0$ in~(\ref{eq:gamma0}). We can then conclude by induction that 
$\gamma_{k+1}=(\kappa+\mu) \alpha_{k}^2$ for all~$k \geq 0$ and~(\ref{alphak2}) is satisfied.

To prove~(\ref{recurrence}), we assume that~$y_{k-1}$ is chosen such that~(\ref{recurrence}) is satisfied,
and show that it is equivalent to defining~$y_k$ as in~(\ref{y_k}).
By lemma~\ref{estimate},
 \begin{align}
 v_{k} & =  \frac{1}{\gamma_{k}} \left((1-\alpha_{k-1}) \gamma_{k-1} v_{k-1} + \alpha_{k-1} \mu x_{k} - \alpha_{k-1} \kappa (y_{k-1} -x_{k})\right)  \nonumber \\
 	    & =  \frac{1}{\gamma_{k}} \left (\frac{(1-\alpha_{k-1})}{\alpha_{k-1}} ((\gamma_{k}+ \alpha_{k-1} \gamma_{k-1})y_{k-1} - \gamma_{k} x_{k-1})   + \alpha_{k-1} \mu x_{k} - \alpha_{k-1} \kappa (y_{k-1} -x_{k}) \right ) \nonumber \\
 	    & = \frac{1}{\gamma_{k}} \left (\frac{(1-\alpha_{k-1})}{\alpha_{k-1}} ((\gamma_{k-1} + \alpha_{k-1} \mu)y_{k-1} - \gamma_{k} x_{k-1})   + \alpha_{k-1} (\mu+\kappa) x_{k} - \alpha_{k-1} \kappa y_{k-1}  \right ) \nonumber \\
 	    & = \frac{1}{\gamma_{k}} \left (\frac{1}{\alpha_{k-1}} (\gamma_{k} - \mu \alpha_{k-1}^2 )y_{k-1} - \frac{(1-\alpha_{k-1})}{\alpha_{k-1}} \gamma_{k} x_{k-1}   + \frac{\gamma_{k}}{\alpha_{k-1}} x_{k} - \alpha_{k-1} \kappa y_{k-1}  \right ) \nonumber \\
      & = \frac{1}{\alpha_{k-1}} (x_{k} - (1-\alpha_{k-1})x_{k-1}), \label{vkalt}
 \end{align}
 As a result, using (\ref{recurrence}) by replacing $k-1$ by $k$ yields
 $$  y_{k} =  x_{k} + \frac{\alpha_{k-1} (1-\alpha_{k-1})}{\alpha_{k-1}^2 + \alpha_{k}} (x_{k} - x_{k-1}),$$
 and we obtain the original equivalent definition of~(\ref{y_k}).
 This concludes the proof.
\end{proof}

With this lemma in hand, we introduce the following proposition, which brings us almost to 
Theorem~\ref{thm}, which we want to prove.
\begin{prop}[\bfseries Auxiliary Proposition for Theorem~\ref{thm}]~\newline
   Let us consider the sequence~$(\lambda_k)_{k \geq 0}$ defined in~(\ref{eq:lambdak}).
   Then, the sequence $(x_{k})_{k \geq 0}$ satisfies 
\begin{equation*} 
 \frac{1}{\lambda_{k}} (F(x_{k}) - F^* + \frac{\gamma_{k}}{2} \Vert x^* - v_{k} \Vert^2) \leqslant \phi_{0} (x^*) -F^*+ \sum_{i=1}^{k}\frac{\epsilon_i}{\lambda_{i}} + \sum_{i=1}^{k} \frac{ \sqrt{2 \epsilon_i \gamma_{i} }} {\lambda_{i}} \Vert x^* - v_{i} \Vert,
 \end{equation*}
 where~$x^\star$ is a minimizer of~$F$ and~$F^\star$ its minimum value.
\end{prop}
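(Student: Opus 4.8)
The plan is to run Nesterov's estimate-sequence argument (Lemma~\ref{lemma:nest}), but carefully track the two sources of error: the slack $\xi_k$ coming from Lemma~\ref{xik}, and the deviation of $(\phi_k)_{k\geq 0}$ from being an \emph{exact} estimate sequence, which arises because we replaced the unknown $x_k^*$ by $x_k$ in the lower bound~(\ref{xk*}). First I would reduce the left-hand side to a single quantity. Combining Lemma~\ref{xik}, which gives $F(x_k)\leq \phi_k^*+\xi_k$, with the canonical form of Lemma~\ref{estimate}, which gives $\phi_k(x^\star)=\phi_k^*+\frac{\gamma_k}{2}\|x^\star-v_k\|^2$, yields
\[
F(x_k)-F^\star+\tfrac{\gamma_k}{2}\|x^\star-v_k\|^2 \;\leq\; \phi_k(x^\star)-F^\star+\xi_k .
\]
It therefore suffices to control $u_k \defin \phi_k(x^\star)-F^\star+\xi_k$ and divide by $\lambda_k$ at the end.

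Next I would set up a one-step recursion for $u_k$. Evaluating the defining recursion of $\phi_k$ at $x=x^\star$ and using Lemma~\ref{lemma1.1} (at $x=x^\star$, with $F(x^\star)=F^\star$) to bound the bracketed affine-quadratic term by the genuine lower bound of $F$ gives
\[
\phi_k(x^\star)-F^\star \leq (1-\alpha_{k-1})(\phi_{k-1}(x^\star)-F^\star) -\alpha_{k-1}(\kappa+\mu)\langle x_k-x_k^*,\,x^\star-x_k\rangle + \alpha_{k-1}\epsilon_k .
\]
Adding the recursion for $\xi_k$ from Lemma~\ref{xik}, the coefficients of $\epsilon_k$ sum to $1$, and the two inner products collapse: using $\alpha_{k-1}v_k = x_k-(1-\alpha_{k-1})x_{k-1}$ from~(\ref{vkalt}), one checks that $\alpha_{k-1}(x^\star-x_k)+(1-\alpha_{k-1})(x_{k-1}-x_k)=\alpha_{k-1}(x^\star-v_k)$, so the combined inner-product term is exactly $-\alpha_{k-1}(\kappa+\mu)\langle x_k-x_k^*,x^\star-v_k\rangle$. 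This produces the clean recursion $u_k \leq (1-\alpha_{k-1})u_{k-1}+\epsilon_k-\alpha_{k-1}(\kappa+\mu)\langle x_k-x_k^*,x^\star-v_k\rangle$.

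I would then control the remaining inner product by Cauchy--Schwarz together with the stopping criterion. Since $G_k$ is $(\kappa+\mu)$-strongly convex and $G_k(x_k)-G_k^\star\leq\epsilon_k$, we have $\|x_k-x_k^*\|\leq\sqrt{2\epsilon_k/(\kappa+\mu)}$, hence $\alpha_{k-1}(\kappa+\mu)|\langle x_k-x_k^*,x^\star-v_k\rangle| \leq \alpha_{k-1}\sqrt{2\epsilon_k(\kappa+\mu)}\,\|x^\star-v_k\|$. Invoking the identity $\gamma_k=(\kappa+\mu)\alpha_{k-1}^2$ already established inside the proof of Lemma~\ref{xik} turns $\alpha_{k-1}\sqrt{\kappa+\mu}$ into $\sqrt{\gamma_k}$, so this term becomes $\sqrt{2\epsilon_k\gamma_k}\,\|x^\star-v_k\|$. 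Finally I would divide $u_k\leq(1-\alpha_{k-1})u_{k-1}+\epsilon_k+\sqrt{2\epsilon_k\gamma_k}\,\|x^\star-v_k\|$ by $\lambda_k=(1-\alpha_{k-1})\lambda_{k-1}$ and telescope from $i=1$ to $k$, using $\xi_0=0$ and $u_0=\phi_0(x^\star)-F^\star$, to recover the stated inequality.

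The main obstacle is the algebraic collapse in the second step: forcing the two inner products---one from the estimate-sequence recursion along $x^\star-x_k$ and one from the definition of $\xi_k$ along $x_{k-1}-x_k$---to merge into a single term aligned with $x^\star-v_k$. This hinges entirely on the explicit form of $v_k$ in~(\ref{vkalt}), and it is precisely what makes the subsequent Cauchy--Schwarz estimate generate the $\sqrt{2\epsilon_i\gamma_i}\,\|x^\star-v_i\|$ contributions that appear in the statement.
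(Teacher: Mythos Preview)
Your proposal is correct and follows essentially the same route as the paper: define $u_k=\phi_k(x^\star)-F^\star+\xi_k$, combine the recursion of $\phi_k$ at $x^\star$ (bounded via Lemma~\ref{lemma1.1}) with the recursion of $\xi_k$, use~(\ref{vkalt}) to collapse the two inner products into $-\alpha_{k-1}(\kappa+\mu)\langle x_k-x_k^\star,x^\star-v_k\rangle$, apply Cauchy--Schwarz together with strong convexity of $G_k$ and the identity $\gamma_k=(\kappa+\mu)\alpha_{k-1}^2$, then divide by $\lambda_k$ and telescope. The only cosmetic difference is that you perform the reduction $F(x_k)-F^\star+\tfrac{\gamma_k}{2}\|x^\star-v_k\|^2\leq u_k$ at the start, whereas the paper does it at the end.
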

\begin{proof}
By the definition of the function~$\phi_k$, we have
\begin{equation*}
   \begin{split}
      \phi_{k} (x^*) & = (1-\alpha_{k-1}) \phi_{k-1} (x^*) + \alpha_{k-1} [F(x_{k}) +  \langle \kappa (y_{k-1} - x_{k}), x^* - x_{k} \rangle + \frac{\mu}{2} \Vert x^* - x_{k} \Vert^2 ] \\
       &\leqslant (1-\alpha_{k-1}) \phi_{k-1} (x^*) + \alpha_{k-1} [F(x^*)+ \epsilon_k  - (\kappa+ \mu) \langle x_{k} - x_{k}^*, x^*-x_{k} \rangle],
   \end{split}
\end{equation*}
where the inequality comes from (\ref{eq3}).
Therefore, by using the definition of~$\xi_k$ in Lemma~\ref{xik},
\begin{eqnarray*}
		&&\phi_{k} (x^*) + \xi_{k} - F^*\\
		&\leqslant& (1-\alpha_{k-1}) (\phi_{k-1} (x^*) +\xi_{k-1} -F^*)+ \epsilon_k -(\kappa+\mu) \langle x_{k} - x_{k}^*, (1-\alpha_{k-1}) x_{k-1} + \alpha_{k-1} x^* - x_{k} \rangle \\
		&=& (1-\alpha_{k-1}) (\phi_{k-1} (x^*) +\xi_{k-1} -F^*)+ \epsilon_k - \alpha_{k-1} (\kappa+\mu) \langle x_{k} - x_{k}^*, x^* - v_{k} \rangle \\
		&\leqslant&  (1-\alpha_{k-1}) (\phi_{k-1} (x^*) +\xi_{k-1} -F^*)+ \epsilon_k + \alpha_{k-1} (\kappa+\mu) \Vert x_{k} - x_{k}^*\Vert \Vert x^* - v_{k} \Vert \\
		&\leqslant&  (1-\alpha_{k-1}) (\phi_{k-1} (x^*) +\xi_{k-1} -F^*)+ \epsilon_k + \alpha_{k-1} \sqrt{2(\kappa+\mu) \epsilon_k} \Vert x^* - v_{k} \Vert\\
		&=&  (1-\alpha_{k-1}) (\phi_{k-1} (x^*) +\xi_{k-1} -F^*)+ \epsilon_k +  \sqrt{2\epsilon_k\gamma_{k} } \Vert x^* - v_{k} \Vert,
\end{eqnarray*}
where the first equality uses the relation~(\ref{vkalt}), the last inequality comes from the strong convexity relation $\varepsilon_k \geq G_k(x_k)-G_k(x_k^\star) \geq (1/2)(\kappa+\mu)\|x_k^\star-x_k\|^2$, and the last equality uses the relation~$\gamma_k = (\kappa+\mu)\alpha_{k-1}^2$.

Dividing both sides by $\lambda_{k}$ yields
$$ \frac{1}{\lambda_{k}} (\phi_{k} (x^*) + \xi_{k} - F^*) \leqslant \frac{1}{\lambda_{k-1}} (\phi_{k-1} (x^*) +\xi_{k-1} -F^*)+ \frac{\epsilon_k}{\lambda_{k}} +  \frac{\sqrt{2 \epsilon_k \gamma_{k} } }{\lambda_{k}} \Vert x^* - v_{k} \Vert. $$
A simple recurrence gives,  
$$ \frac{1}{\lambda_{k}} (\phi_{k} (x^*) + \xi_{k} - F^*) \leqslant \phi_{0} (x^*) -F^*+ \sum_{i=1}^{k}\frac{\epsilon_i}{\lambda_{i}} + \sum_{i=1}^{k} \frac{ \sqrt{2 \epsilon_i \gamma_{i} }} {\lambda_{i}} \Vert x^* - v_{i} \Vert. $$
Finally, by lemmas~\ref{estimate} and~\ref{xik}, 
\begin{displaymath}
  \phi_{k}(x^*) + \xi_{k} -F^*  = \,\, \frac{\gamma_{k}}{2} \Vert x^* - v_{k} \Vert^2 + \phi_{k}^*+ \xi_{k} -F^* \geqslant  \frac{\gamma_{k}}{2} \Vert x^* - v_{k} \Vert^2+ F(x_{k}) -F^*.
 \end{displaymath}
As a result,
\begin{equation} \label{error}
 \frac{1}{\lambda_{k}} (F(x_{k}) - F^* + \frac{\gamma_{k}}{2} \Vert x^* - v_{k} \Vert^2) \leqslant \phi_{0} (x^*) -F^*+ \sum_{i=1}^{k}\frac{\epsilon_i}{\lambda_{i}} + \sum_{i=1}^{k} \frac{ \sqrt{2 \epsilon_i \gamma_{i} }} {\lambda_{i}} \Vert x^* - v_{i} \Vert.
 \end{equation}
\end{proof}

To control the error term on the right and finish the proof of Theorem~\ref{thm}, we are going to borrow some methodology used to analyze the convergence of
inexact proximal gradient algorithms from~\cite{proxinexact}, and use an extension of a lemma presented in~\cite{proxinexact} to bound the value of $\Vert v_i - x^* \Vert $. 
This lemma is presented below.
\begin{lemma}[\bfseries Simple Lemma on Non-Negative Sequences]\label{lemma1.3}~\newline
   Assume that the nonnegative sequences $(u_k)_{k \geq 0}$ and $(a_k)_{k \geq 0}$ satisfy the following recursion for all $k \geq 0$:
\begin{equation}\label{star}
 u_k^2 \leqslant S_k + \sum_{i=1}^{k} a_i u_i,
\end{equation}
where $(S_k)_{k \geq 0}$ is an increasing sequence such that $S_0 \geqslant u_0^2$. Then,
\begin{equation}
   u_k \leqslant \frac{1}{2} \sum_{i=1}^{k} a_i  + \sqrt{ \Big (\frac{1}{2} \sum_{i=1}^k a_i \Big )^2+ S_k}.\label{eq:uk}
\end{equation}
Moreover, 
$$ S_k + \sum_{i=1}^{k} a_i u_i \leqslant \left( \sqrt{S_k}+ \sum_{i=1}^{k} a_i \right)^2.$$
\end{lemma}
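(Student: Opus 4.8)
The plan is to recognize that the recursion (\ref{star}) is \emph{implicit}: the unknown $u_k$ appears on both sides, since it is the last term of the sum $\sum_{i=1}^k a_i u_i$, so one cannot simply read off a bound on $u_k$ directly. The standard device for inequalities of this shape is to compare $u_k$ against the envelope obtained by solving the associated quadratic equation. Concretely, I would set $A_k \defin \sum_{i=1}^k a_i$ (with $A_0 = 0$) and define $v_k \defin \frac{1}{2}A_k + \sqrt{(\frac{1}{2}A_k)^2 + S_k}$, which is exactly the right-hand side of (\ref{eq:uk}). Two facts make $v_k$ useful: it is the unique nonnegative root of the polynomial $t \mapsto t^2 - A_k t - S_k$, so that $v_k^2 = A_k v_k + S_k$, and it is nondecreasing in $k$ because both $A_k$ and $S_k$ are. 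With this notation the first claim (\ref{eq:uk}) is precisely the statement $u_k \leq v_k$.

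First I would prove $u_k \leq v_k$ by induction on $k$. For the base case, the recursion at $k=0$ reads $u_0^2 \leq S_0$ (the sum is empty), and since $A_0 = 0$ we have $v_0 = \sqrt{S_0}$, giving $u_0 \leq v_0$. For the inductive step I would argue by contradiction: assume $u_k > v_k$. By the induction hypothesis together with the monotonicity of $(v_k)$, each earlier term satisfies $u_i \leq v_i \leq v_k < u_k$ for $i < k$, so in fact $u_i \leq u_k$ for every $i \leq k$. Substituting these bounds into (\ref{star}) yields $u_k^2 \leq S_k + A_k u_k$, that is $u_k^2 - A_k u_k - S_k \leq 0$. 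Since the polynomial $t \mapsto t^2 - A_k t - S_k$ is nonpositive on $[0, v_k]$ (its negative root lies below $0$) and strictly positive beyond $v_k$, this forces $u_k \leq v_k$, contradicting the assumption. Hence $u_k \leq v_k$, which is (\ref{eq:uk}).

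For the second inequality I would reuse $u_i \leq v_i \leq v_k$ for all $i \leq k$ to bound the right-hand side at once: $S_k + \sum_{i=1}^k a_i u_i \leq S_k + A_k v_k = v_k^2$, where the last equality is the defining relation of $v_k$. It then remains to check $v_k \leq \sqrt{S_k} + A_k$, which follows from subadditivity of the square root, $\sqrt{\frac{1}{4}A_k^2 + S_k} \leq \frac{1}{2}A_k + \sqrt{S_k}$; squaring gives $v_k^2 \leq (\sqrt{S_k} + A_k)^2$ and hence the claim. The only genuinely delicate point is the self-referential nature of (\ref{star}); once it is dissolved by comparing $u_k$ against the quadratic envelope $v_k$ and exploiting the monotonicity of $(v_k)$, everything else is elementary algebra. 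I would be careful to use the hypotheses $S_0 \geq u_0^2$ and that $(S_k)$ is increasing exactly where they are needed, namely to launch the induction and to guarantee that $(v_k)$ is nondecreasing.
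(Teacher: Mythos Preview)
Your proof is correct and follows essentially the same approach as the paper. The paper defers the first inequality (\ref{eq:uk}) to Lemma~1 of~\cite{proxinexact} rather than reproving it, while you supply a self-contained induction-by-contradiction argument; for the second inequality both proofs are identical, bounding $u_i \leq v_i \leq v_k$, using the quadratic identity $S_k + A_k v_k = v_k^2$, and then invoking subadditivity of the square root to get $v_k \leq \sqrt{S_k} + A_k$.
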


\begin{proof}
   The first part---that is, Eq.~(\ref{eq:uk})---is exactly Lemma~1
   from~\cite{proxinexact}. The proof is in their appendix.
   Then, by calling~$b_k$ the right-hand side of~(\ref{eq:uk}), we have that for all $k \geqslant 1$, $u_k \leqslant b_k$. Furthermore $(b_k)_{k \geq 0}$ is increasing and we have 
\begin{eqnarray*}
S_k + \sum_{i=1}^{k} a_i u_i \leqslant S_k + \sum_{i=1}^{k} a_i b_i \leqslant S_k + \Big (\sum_{i=1}^{k} a_i \Big) b_k = b_k^2,
\end{eqnarray*}
and using the inequality $\sqrt{x+y} \leqslant \sqrt{x} + \sqrt{y}$, we have
\begin{equation*}
 b_k = \frac{1}{2} \sum_{i=1}^{k} a_i  + \sqrt{ \Big (\frac{1}{2} \sum_{i=1}^k a_i \Big )^2+ S_k} 
 	\,\,\leqslant \,\,\frac{1}{2} \sum_{i=1}^{k} a_i  + \sqrt{ \Big (\frac{1}{2} \sum_{i=1}^k a_i \Big )^2}+ \sqrt{S_k} 
	\,\,=\,\,\sqrt{S_k}+ \sum_{i=1}^{k} a_i.
\end{equation*}
As a result, 
$$ S_k + \sum_{i=1}^{k} a_i u_i  \leqslant b_k^2 \leqslant  \left(\sqrt{S_k}+ \sum_{i=1}^{k} a_i \right)^2. $$
\end{proof}

We are now in shape to conclude the proof of Theorem~\ref{thm}. We apply the previous lemma to (\ref{error}):
$$ \frac{1}{\lambda_{k}} \Big (\frac{\gamma_{k}}{2} \Vert x^* - v_{k} \Vert^2+ F(x_{k}) -F^* \Big ) \leqslant  \phi_{0} (x^*)  -F^*+ \sum_{i=1}^{k}\frac{\epsilon_i}{\lambda_{i}} + \sum_{i=1}^{k} \frac{ \sqrt{2 \epsilon_i \gamma_{i} }} {\lambda_{i}}  \Vert x^* - v_{i} \Vert.$$
Since $F(x_{k}) -F^* \geqslant 0$, we have 
$$ \underbrace {\frac{\gamma_{k}}{2 \lambda_{k}} \Vert x^* - v_{k} \Vert^2 }_{u_k^2} \leqslant \underbrace{\phi_{0} (x^*) -F^*+ \sum_{i=1}^{k}\frac{\epsilon_i}{\lambda_{i}}}_{S_k} + \sum_{i=1}^{k} \underbrace{\frac{ \sqrt{2 \epsilon_i \gamma_{i} }} {\lambda_{i}} \Vert x^* - v_{i} \Vert}_{a_i u_i},$$
with
$$   u_i  = \sqrt{\frac{\gamma_{i}}{2\lambda_{i}}} \Vert x^* - v_{i} \Vert  \quad  \text{ and }\quad a_i = 2 \sqrt {\frac{\epsilon_i }{\lambda_{i}}} ~~\text{and}~~
 S_k = \phi_{0} (x^*)  -F^*+ \sum_{i=1}^{k}\frac{\epsilon_i}{\lambda_{i}}.$$
Then by Lemma~{\ref{lemma1.3}}, we have
\begin{equation*}
 F(x_{k}) -F^* \leqslant  \,\,\lambda_{k} \left ( S_k +  \sum_{i=1}^k a_i u_i \right ) 
\leqslant \,\, \lambda_{k} \left( \sqrt{S_k}+ \sum_{i=1}^{k} a_i \right)^2 
= \,\, \lambda_{k} \left( \sqrt{S_k}+2 \sum_{i=1}^{k} \sqrt {\frac{\epsilon_i }{\lambda_{i}}} \right)^2,
\end{equation*}
which is the desired result.

\section{Proofs of the Main Theorems and Propositions}\label{sec:proofs}
\subsection{Proof of Theorem \ref{convergence}}
\begin{proof}
   We simply use Theorem~\ref{thm} and specialize it to the choice of parameters.
   The initialization $\alpha_0=\sqrt{q}$ leads to a particularly simple form of the
   algorithm, where~$\alpha_k=\sqrt{q}$ for all~$k \geq 0$.
   Therefore, the sequence~$(\lambda_k)_{k \geq 0}$ from Theorem~\ref{thm} is also
   simple. For all~$k \geq 0$, we indeed have $\lambda_k  = (1-\sqrt{q})^{k}$. 
   To upper-bound the quantity~$S_k$ from Theorem~\ref{thm}, we now remark that $\gamma_0=\mu$ and thus, by strong convexity of~$F$,
$$ F(x_0) + \frac{\gamma_0}{2} \Vert x_0 - x^* \Vert^2 - F^* \leqslant 2 (F(x_0) -F^*).$$
Therefore,
\begin{equation*}
   \begin{split}
      \sqrt{S_k}+2 \sum_{i=1}^{k} \sqrt {\frac{\epsilon_i }{\lambda_{i}}}  & =  \sqrt{F(x_0) + \frac{\gamma_0}{2} \Vert x_0 - x^* \Vert^2 - F^*+ \sum_{i=1}^{k}\frac{\epsilon_i}{\lambda_{i}}} +2 \sum_{i=1}^{k} \sqrt {\frac{\epsilon_i }{\lambda_{i}}} \\
 &\leqslant  \sqrt{ F(x_0) + \frac{\gamma_0}{2} \Vert x_0 - x^* \Vert^2  -F^*}+ 3 \sum_{i=1}^{k} \sqrt {\frac{\epsilon_i }{\lambda_{i}}} \\
  & \leqslant  \sqrt{ 2(F(x_0)   -F^*)}+ 3 \sum_{i=1}^{k} \sqrt {\frac{\epsilon_i }{\lambda_{i}}} \\
 & =  \sqrt{ 2(F(x_0)   -F^*)} \left[ 1+  \sum_{i=1}^{k}  {\underbrace{\left (\sqrt {\frac{1- \rho }{1- \sqrt{q}}}\right ) }_{\eta}}^{i}\right] \\
 & =  \sqrt{ 2(F(x_0)   -F^*)} \,\, \frac{\eta^{k+1}-1}{\eta-1} \\
 & \leqslant  \sqrt{ 2(F(x_0)   -F^*)} \,\, \frac{\eta^{k+1}}{\eta-1}.
   \end{split}
\end{equation*}

Therefore, Theorem~\ref{thm} combined with the previous inequality gives us
\begin{equation*}
   \begin{split}
      F(x_{k}) -F^*  & \leqslant  2\lambda_{k} (F(x_0)   -F^*) \,\, \left (\frac{\eta^{k+1}}{\eta-1} \right )^2 \\
                     & =  2\left ( \frac{\eta}{\eta-1} \right )^2 (1-\rho)^{k}(F(x_0)  -F^*) \\
                     & =  2\left ( \frac{\sqrt{1-\rho}}{\sqrt{1-\rho} - \sqrt{1- \sqrt{q}}} \right )^2 (1-\rho)^{k}(F(x_0)  -F^*) \\
                     & =  2\left ( \frac{1}{\sqrt{1-\rho} - \sqrt{1- \sqrt{q}}} \right )^2 (1-\rho)^{k+1}(F(x_0)  -F^*).
   \end{split}
\end{equation*}
Since $\sqrt{1-x} + \frac{x}{2} $ is decreasing in $[0,1]$, we have $\sqrt{1-\rho}+\frac{\rho}{2} \geqslant \sqrt{1-\sqrt{q}} + \frac{\sqrt{q}}{2}$. Consequently,
$$ F(x_{k}) -F^* \leqslant \frac{8}{(\sqrt{q} -\rho)^2} (1-\rho)^{k+1} (F(x_0)  -F^*). $$
\end{proof}
\subsection{Proof of Proposition~\ref{prop:complexity}}
To control the number of calls of $\mtd$, we need to upper bound
$G_k(x_{k-1})-G_k^*$ which is given by the following lemma:
\begin{lemma}[\bfseries Relation between~$G_k(x_{k-1})$ and~$\varepsilon_{k-1}$]~\label{6.11}\newline
   Let~$(x_k)_{k \geq 0}$ and~$(y_k)_{k \geq 0}$ be generated by Algorithm~\ref{alg:catalyst}.
   Remember that by definition of~$x_{k-1}$, 
$$ G_{k-1}(x_{k-1}) - G_{k-1}^* \leqslant \epsilon_{k-1}.$$ 
Then, we have 
\begin{equation} \label{bound}
G_k(x_{k-1}) - G_k^* \leqslant 2\epsilon_{k-1}+ \frac{\kappa^2}{\kappa+\mu} \Vert y_{k-1} - y_{k-2} \Vert^2.
\end{equation}
\end{lemma}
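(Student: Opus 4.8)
The plan is to exploit the fact that the two consecutive subproblems differ only through their quadratic anchor. Writing $x_k^*$ for the exact minimizer of $G_k$ and $x_{k-1}^*$ for that of $G_{k-1}$, I observe that $G_k(x)-G_{k-1}(x)=\frac{\kappa}{2}\Vert x-y_{k-1}\Vert^2-\frac{\kappa}{2}\Vert x-y_{k-2}\Vert^2$ is an \emph{affine} function of $x$ with gradient $\kappa(y_{k-2}-y_{k-1})$, so $F$ cancels and the gap rewrites exactly as
\begin{equation*}
G_k(x_{k-1})-G_k^* = \bigl(G_{k-1}(x_{k-1})-G_{k-1}(x_k^*)\bigr) + \kappa\langle y_{k-2}-y_{k-1},\, x_{k-1}-x_k^*\rangle .
\end{equation*}
(The same identity also follows by expanding the four squared distances directly, which collapse into this single inner product.)

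For the first bracket the key is \emph{not} to bound it crudely by $\epsilon_{k-1}$, but to retain the strong-convexity slack. Since $G_{k-1}$ is $(\kappa+\mu)$-strongly convex with minimizer $x_{k-1}^*$, we have $G_{k-1}(x_k^*)\geq G_{k-1}^*+\frac{\kappa+\mu}{2}\Vert x_{k-1}^*-x_k^*\Vert^2$; combined with the inexactness hypothesis $G_{k-1}(x_{k-1})-G_{k-1}^*\leq\epsilon_{k-1}$ this yields $G_{k-1}(x_{k-1})-G_{k-1}(x_k^*)\leq\epsilon_{k-1}-\frac{\kappa+\mu}{2}\Vert x_{k-1}^*-x_k^*\Vert^2$. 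Keeping the negative term is precisely what makes the advertised constant come out.

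Finally I would split $x_{k-1}-x_k^*=(x_{k-1}-x_{k-1}^*)+(x_{k-1}^*-x_k^*)$ and apply Young's inequality $\kappa\langle u,v\rangle\leq\frac{\kappa+\mu}{2}\Vert v\Vert^2+\frac{\kappa^2}{2(\kappa+\mu)}\Vert u\Vert^2$ to each inner product, with $u=y_{k-2}-y_{k-1}$ in both. On the piece $x_{k-1}^*-x_k^*$ the produced $\frac{\kappa+\mu}{2}\Vert x_{k-1}^*-x_k^*\Vert^2$ exactly cancels the retained slack, leaving $\frac{\kappa^2}{2(\kappa+\mu)}\Vert y_{k-1}-y_{k-2}\Vert^2$. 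On the piece $x_{k-1}-x_{k-1}^*$ I would again use strong convexity, $\frac{\kappa+\mu}{2}\Vert x_{k-1}-x_{k-1}^*\Vert^2\leq G_{k-1}(x_{k-1})-G_{k-1}^*\leq\epsilon_{k-1}$, contributing an extra $\epsilon_{k-1}$ together with a second $\frac{\kappa^2}{2(\kappa+\mu)}\Vert y_{k-1}-y_{k-2}\Vert^2$. Summing gives exactly $2\epsilon_{k-1}+\frac{\kappa^2}{\kappa+\mu}\Vert y_{k-1}-y_{k-2}\Vert^2$.

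I expect the main obstacle to be the cross term between the optimization error $\epsilon_{k-1}$ and the anchor displacement $\Vert y_{k-1}-y_{k-2}\Vert$: a bare Cauchy--Schwarz bound on $\kappa\langle y_{k-2}-y_{k-1},x_{k-1}-x_k^*\rangle$ leaves a product $\sqrt{\epsilon_{k-1}}\,\Vert y_{k-1}-y_{k-2}\Vert$ that does not fit the target form, and simply discarding the strong-convexity slack inflates the quadratic coefficient to $\frac{2\kappa^2}{\kappa+\mu}$. The delicate point is the bookkeeping that keeps $-\frac{\kappa+\mu}{2}\Vert x_{k-1}^*-x_k^*\Vert^2$ and balances the two Young steps so that each absorbs exactly one term, which is what pins down the stated constants.
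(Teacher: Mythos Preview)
Your proof is correct and follows essentially the same route as the paper: the same affine identity for $G_k-G_{k-1}$, the same retention of the strong-convexity slack $-\frac{\kappa+\mu}{2}\Vert x_{k-1}^*-x_k^*\Vert^2$, the same splitting $x_{k-1}-x_k^*=(x_{k-1}-x_{k-1}^*)+(x_{k-1}^*-x_k^*)$, and the same two Young inequalities with the same weighting. The only cosmetic difference is that the paper phrases the affine identity as $G_k(x)-G_{k-1}(x)=G_k(y)-G_{k-1}(y)+\kappa\langle y-x,\,y_{k-1}-y_{k-2}\rangle$ before specializing to $x=x_{k-1}$, $y=x_k^*$.
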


\begin{proof}
   We first remark that for any $x,y$ in $\mathbb{R}^p$, we have
  $$  G_k(x) - G_{k-1}(x) = G_k(y) - G_{k-1}(y) + \kappa \langle y-x, y_{k-1} - y_{k-2} \rangle, \quad \forall k \geqslant 2,    $$
  which can be shown by using the respective definitions of $G_k$ and~$G_{k-1}$ and manipulate the quadratic term resulting from the difference $G_k(x)-G_{k-1}(x)$.

Plugging $x = x_{k-1}$ and $y = x_k^*$ in the previous relation yields
\begin{equation}\label{eq:ineq1}
   \begin{split}
G_k(x_{k-1}) -G_k^* &= G_{k-1}(x_{k-1}) -G_{k-1}(x_k^*) + \kappa \langle x_k^* - x_{k-1}, y_{k-1} - y_{k-2} \rangle \\
				&= G_{k-1}(x_{k-1}) -G_{k-1}^* + G_{k-1}^*-G_{k-1}(x_k^*) + \kappa \langle x_k^* - x_{k-1}, y_{k-1} - y_{k-2} \rangle  \\
    &\leqslant \varepsilon_{k-1}   + G_{k-1}^*-G_{k-1}(x_k^*) + \kappa \langle x_k^* - x_{k-1}, y_{k-1} - y_{k-2} \rangle  \\
				&\leqslant \epsilon_{k-1} - \frac{\mu+\kappa}{2} \Vert x_k^* - x_{k-1}^* \Vert^2 + \kappa \langle x_k^* - x_{k-1}, y_{k-1} - y_{k-2} \rangle,
   \end{split}
\end{equation}
where the last inequality comes from the strong convexity inequality of 
$$G_{k-1}(x_k^\star) \geq G_{k-1}^\star + \frac{\mu+\kappa}{2}\|x_k^\star-x_{k-1}^\star \|^2.$$
Moreover, from the inequality ${\langle x,y\rangle} \leq \frac{1}{2}\|x\|^2 + \frac{1}{2}\|y\|^2$, we also have
\begin{equation}
   \kappa \langle x_k^* -x_{k-1}^*, y_{k-1} - y_{k-2} \rangle \leqslant  \frac{\mu+\kappa}{2} \Vert x_k^* - x_{k-1}^* \Vert^2 + \frac{\kappa^2}{2(\kappa+\mu)} \Vert y_{k-1} -y_{k-2} \Vert^2,\label{eq:ineq2}
\end{equation}
and
\begin{equation}\label{eq:ineq3}
   \begin{split}
\kappa \langle  x_{k-1}^*- x_{k-1}, y_{k-1} - y_{k-2} \rangle &\leqslant  \frac{\mu+\kappa}{2} \Vert x_{k-1}^* - x_{k-1} \Vert^2 + \frac{\kappa^2}{2(\kappa+\mu)} \Vert y_{k-1} -y_{k-2} \Vert^2 \\
                                                              & \leqslant \epsilon_{k-1} + \frac{\kappa^2}{2(\kappa+\mu)} \Vert y_{k-1} -y_{k-2} \Vert^2.
   \end{split}
\end{equation}
Summing inequalities~(\ref{eq:ineq1}), (\ref{eq:ineq2}) and~(\ref{eq:ineq3}) gives the desired result.
\end{proof}

Next, we need to upper-bound the term $\Vert y_{k-1} - y_{k-2} \Vert^2$, which was also required in the convergence proof
of the accelerated SDCA algorithm~\cite{accsdca}. We follow here their methodology.
\begin{lemma}[\bfseries Control of the term~$\Vert y_{k-1} - y_{k-2} \Vert^2$.]~\label{6.13}\newline
   Let us consider the iterates~$(x_k)_{k \geq 0}$ and~$(y_k)_{k \geq 0}$ produced by Algorithm~\ref{alg:catalyst}, and define 
$$ \delta_k = C (1-\rho)^{k+1} (F(x_0) -F^*),$$
which appears in Theorem~\ref{convergence} and which is such that $F(x_k)-F^* \leqslant \delta_k$. 
Then, for any $k \geqslant 3$, 
$$\Vert y_{k-1} - y_{k-2} \Vert^2 \leqslant \frac{72}{\mu} \delta_{k-3}.$$
\end{lemma}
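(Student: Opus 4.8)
The plan is to reduce the bound on $\|y_{k-1}-y_{k-2}\|^2$ to bounds on the distances $\|x_j - x^\star\|$, which strong convexity controls directly in terms of $\delta_j$. First I would record that in the $\mu$-strongly convex regime with $\alpha_0 = \sqrt q$, the proof of Theorem~\ref{convergence} already establishes $\alpha_k = \sqrt q$ for every $k$; substituting this into~(\ref{y_k}) collapses the extrapolation coefficient to the constant $\beta = (1-\sqrt q)/(1+\sqrt q)$, which satisfies $0 < \beta < 1$. This uniform bound on $\beta$ is precisely what makes the final constant explicit.

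Next I would invoke $\mu$-strong convexity of $F$ in the form $\frac{\mu}{2}\|x_j - x^\star\|^2 \le F(x_j) - F^\star \le \delta_j$, so that $\|x_j - x^\star\| \le \sqrt{2\delta_j/\mu}$. By the triangle inequality, $\|x_j - x_{j-1}\| \le \sqrt{2\delta_j/\mu} + \sqrt{2\delta_{j-1}/\mu}$. Because $\delta_k = C(1-\rho)^{k+1}(F(x_0)-F^\star)$ is decreasing in $k$ (here $1-\rho<1$), every index appearing for $j \in \{k-1,k-2,k-3\}$ can be dominated by $\delta_{k-3}$; this yields $\|x_{k-1}-x_{k-2}\| \le 2\sqrt{2\delta_{k-3}/\mu}$ and $\|x_{k-2}-x_{k-3}\| \le 2\sqrt{2\delta_{k-3}/\mu}$. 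This is exactly the step where the shift by three indices and the hypothesis $k\ge 3$ are used, since $x_{k-3}$ must be well defined.

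Finally I would expand the difference of two consecutive extrapolation steps. Writing out~(\ref{y_k}) gives $y_{k-1}-y_{k-2} = (1+\beta)(x_{k-1}-x_{k-2}) - \beta(x_{k-2}-x_{k-3})$, and the triangle inequality together with $\beta<1$ produces $\|y_{k-1}-y_{k-2}\| \le (1+2\beta)\cdot 2\sqrt{2\delta_{k-3}/\mu} \le 6\sqrt{2\delta_{k-3}/\mu}$. Squaring yields $\|y_{k-1}-y_{k-2}\|^2 \le 72\,\delta_{k-3}/\mu$, which is the claim.

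The computation itself is routine; the only genuinely delicate points are (i) establishing the uniform bound $\beta<1$, which relies on the constant-$\alpha$ behavior specific to the strongly convex initialization $\alpha_0=\sqrt q$, and (ii) correctly exploiting the monotonicity of $\delta_k$ to collapse three different decay factors into the single worst-case term $\delta_{k-3}$ while still landing on the target constant $72$. Keeping the sharper factors $\delta_{k-1}=(1-\rho)^2\delta_{k-3}$ and $\delta_{k-2}=(1-\rho)\delta_{k-3}$ would only shrink the constant, so the stated bound is a deliberately loose but clean estimate that suffices for the subsequent inner-loop complexity argument.
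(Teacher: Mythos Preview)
Your proposal is correct and follows essentially the same route as the paper: expand $y_{k-1}-y_{k-2}$ via (\ref{y_k}), use $\beta_k\le 1$ to bound the coefficients by $3$, control $\|x_j-x_{j-1}\|$ through $\|x_j-x^\star\|$ via the triangle inequality and strong convexity, and finally exploit the monotonicity of $\delta_k$ to land on $72\,\delta_{k-3}/\mu$. The only cosmetic difference is that the paper also verifies $\beta_k\le 1$ for arbitrary $\alpha_0$ (not just the constant case $\alpha_k=\sqrt q$), but since the lemma is stated under the hypotheses of Theorem~\ref{convergence} your specialization is sufficient.
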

\begin{proof}
   We follow here~\cite{accsdca}. By definition of $y_k$, we have
\begin{equation*}
   \begin{split}
\Vert y_{k-1} - y_{k-2} \Vert  &= \Vert x_{k-1} + \beta_{k-1} (x_{k-1} - x_{k-2}) - x_{k-2} - \beta_{k-2} (x_{k-2} - x_{k-3}) \Vert \\
                               &\leqslant (1+\beta_{k-1}) \Vert x_{k-1} - x_{k-2} \Vert + \beta_{k-2} \Vert x_{k-2} - x_{k-3} \Vert \\
				     &\leqslant 3 \max \left \{ \Vert x_{k-1} - x_{k-2} \Vert, \Vert x_{k-2} - x_{k-3} \Vert  \right \},
   \end{split}
\end{equation*}
where~$\beta_k$ is defined in~(\ref{y_k}). 
The last inequality was due to the fact that~$\beta_k \leq 1$.  Indeed, the
specific choice of~$\alpha_0=\sqrt{q}$ in Theorem~\ref{thm} leads to~$\beta_k = \frac{\sqrt{q}-{q}}{\sqrt{q}+q} \leqslant 1$ for
all~$k$. Note, however, that this relation~$\beta_k \leq 1$ is true regardless
of the choice of~$\alpha_0$:
\begin{displaymath}
   \beta_k^2 = \frac{\left(\alpha_{k-1} - \alpha_{k-1}^2\right)^2}{\left(\alpha_{k-1}^2 + \alpha_k\right)^2} = 
\frac{\alpha_{k-1}^2 + \alpha_{k-1}^4 - 2\alpha_{k-1}^3}{\alpha_{k}^2  + 2\alpha_k \alpha_{k-1}^2 + \alpha_{k-1}^4} = 
\frac{\alpha_{k-1}^2 + \alpha_{k-1}^4 - 2\alpha_{k-1}^3}{\alpha_{k-1}^2 + \alpha_{k-1}^4 + q\alpha_k + \alpha_k \alpha_{k-1}^2} \leq 1,
\end{displaymath}
where the last equality uses the relation $\alpha_{k}^2 + \alpha_{k} \alpha_{k-1}^2= \alpha_{k-1}^2 + q \alpha_{k}$ from Algorithm~\ref{alg:catalyst}.
To conclude the lemma, we notice that by triangle inequality
$$ \Vert x_{k} - x_{k-1} \Vert \leqslant  \Vert x_{k}-x^* \Vert + \Vert x_{k-1}-x^* \Vert,$$
and by strong convexity of~$F$
$$  \frac{\mu}{2}\Vert x_k - x^* \Vert^2 \leqslant F(x_k) -F(x^*)  \leqslant \delta_{k}.$$
As a result, 
\begin{align*}
\Vert y_{k-1} - y_{k-2} \Vert^2 & \leqslant 9 \max \left \{ \Vert x_{k-1} - x_{k-2} \Vert^2, \Vert x_{k-2} - x_{k-3} \Vert^2  \right \} \\
					& \leqslant 36 \max \left \{ \Vert x_{k-1} - x^* \Vert^2, \Vert x_{k-2} - x^* \Vert^2, \Vert x_{k-3} - x^* \Vert^2   \right \} \\
					& \leqslant \frac{72}{\mu} \delta_{k-3}.
\end{align*}
\end{proof}

We are now in shape to conclude the proof of Proposition~\ref{prop:complexity}.

By Proposition~\ref{6.11} and lemma~\ref{6.13}, we have for all $k \geqslant 3$,
$$   G_k(x_{k-1}) - G_k^* \leqslant 2\epsilon_{k-1}+ \frac{\kappa^2}{\kappa+\mu} \frac{72}{\mu} \delta_{k-3} \leqslant  2\epsilon_{k-1}+  \frac{72 \kappa}{\mu} \delta_{k-3}.$$
Let $(z_t)_{t \geq 0}$ be the sequence of using $\mtd$ to solve $G_k$ with initialization $z_0=x_{k-1}$. 
By assumption~(\ref{eq:rategk}), we have
$$ G_k(z_t) - G_k^* \leqslant A \, (1-\tau_{\mtd})^t (G_k(x_{k-1}) - G_k^*) \leqslant A \, e^{-\tau_{\mtd}t} (G_k(x_{k-1}) - G_k^*).$$
The number of iterations $T_\mtd$ of $\mtd$ to guarantee an accuracy of $\epsilon_k$ needs to satisfy
$$   A \, e^{-\tau_{\mtd}T_{\mtd}} (G_k(x_{k-1}) - G_k^*) \leqslant \epsilon_k,  $$
which gives
\begin{equation}
\label{eq:def-T-mtd-direct}
 T_{\mtd} = \left \lceil \frac{1}{\tau_{\mtd}} \ln \left ( \frac{ A (G_k(x_{k-1}) - G_k^*)}{\epsilon_k}  \right ) \right \rceil.
 \end{equation}
Then, it remains to upper-bound
\begin{equation*}
  \frac{G_k(x_{k-1}) -G_k^*}{\epsilon_k}  \leqslant \frac{2\epsilon_{k-1}+ \frac{72 \kappa}{\mu} \delta_{k-3} }{\epsilon_k} 
 									 = \frac{2 (1-\rho)+  \frac{72 \kappa}{\mu} \cdot \frac{9C}{2} }{(1-\rho)^2} 
 									 = \frac{2}{1-\rho}+ \frac{2592 \kappa}{\mu (1-\rho)^2 (\sqrt{q} - \rho)^2}.
\end{equation*}
Let us denote $R$ the right-hand side. We remark that this upper bound holds for $k \geqslant 3$. We now consider the cases $k=1$ and $k=2$.

When $k=1$, $G_1(x) = F(x) + \frac{\kappa}{2} \Vert x- y_0 \Vert^2$. Note that $x_0=y_0$, then $G_1(x_0) = F(x_0)$. As a result, 
$$G_1(x_0) -G_1^*= F(x_0)-F(x_1^*)-\frac{\kappa}{2} \Vert x_1^* - y_0 \Vert^2 \leqslant F(x_0)-F(x_1^*) \leqslant F(x_0)-F^*.$$
Therefore,
$$ \frac{G_1(x_0) -G_1^*}{\epsilon_1} \leqslant \frac{F(x_0)-F^*}{\epsilon_1} = \frac{9}{2(1-\rho)} \leqslant R.$$

When $k=2$, we remark that $ y_1 - y_0 = (1+\beta_1) (x_1-x_0)$. Then, by following similar steps as in the proof of Lemma~\ref{6.13}, we have
$$ \Vert y_1 - y_0 \Vert^2 \leqslant 4 \Vert x_1 -x_0 \Vert^2 \leqslant \frac{32 \delta_0}{\mu},$$
which is smaller than $ \frac{72 \delta_{-1}}{\mu}$. Therefore, the previous steps from the case~$k \geq 3$ apply and 
$\frac{G_2(x_1) -G_2^*}{\epsilon_2} \leqslant R$.
Thus, for any $k \geqslant 1$,  
\begin{equation}
\label{eq:def-T-mtd-major}
   T_{\mtd} \leqslant  \left \lceil \frac{ \ln \left ( A R \right ) }{\tau_{\mtd}} \right \rceil,
\end{equation}
which concludes the proof.

\subsection{Proof of Theorem~\ref{convergence2}.}
We will again Theorem~\ref{thm} and specialize it to the choice of parameters.
To apply it, the following Lemma will be useful to control the growth
of~$(\lambda_k)_{k \geq 0}$.
\begin{lemma}[\bfseries Growth of the Sequence~$(\lambda_k)_{k \geq 0}$]~\label{lemma:lambdak}\newline
   Let $(\lambda_k)_{k \geq 0}$ be the sequence defined in~(\ref{eq:lambdak})
   where~$(\alpha_k)_{k \geq 0}$ is produced by Algorithm~\ref{alg:catalyst}
   with~$\alpha_0=\frac{\sqrt{5}-1}{2}$ and~$\mu=0$.
   Then, we have the following bounds for all~$k \geq 0$,
   \begin{displaymath}
      \frac{4}{(k+2)^2} \geq \lambda_k \geq \frac{2}{(k+2)^2}.
   \end{displaymath}
\end{lemma}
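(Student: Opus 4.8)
The plan is to reduce everything to the auxiliary sequence $a_k \defin \sqrt{\lambda_k}$, establish a clean self-contained recursion for it, and then telescope the reciprocals $1/a_k$ to obtain the two matching bounds. First I would record that $\mu=0$ forces $q=0$, so the relation defining the $\alpha_k$'s in Algorithm~\ref{alg:catalyst} collapses to $\alpha_k^2 = (1-\alpha_k)\alpha_{k-1}^2$. Since $\lambda_{k+1} = (1-\alpha_k)\lambda_k$ by~(\ref{eq:lambdak}), a one-line induction gives $\lambda_{k+1} = \alpha_k^2$ for all $k\geq 0$; the base case $\lambda_1 = 1-\alpha_0 = \alpha_0^2$ is exactly the golden-ratio identity $\alpha_0^2 + \alpha_0 - 1 = 0$ satisfied by $\alpha_0 = (\sqrt5-1)/2$. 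Consequently $\alpha_k = \sqrt{\lambda_{k+1}} = a_{k+1}$, and the update $\lambda_{k+1}=(1-\alpha_k)\lambda_k$ becomes the closed recursion $a_{k+1}^2 = a_k^2(1-a_{k+1})$ with $a_0=1$. From this I immediately get that $(a_k)$ is strictly decreasing and positive, and that $a_{k+1}/a_k = \sqrt{1-a_{k+1}}$.

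Next I would derive the telescoping identity. Writing $a_k^2 - a_{k+1}^2 = a_k^2 a_{k+1}$ and factoring the left-hand side yields
$$\frac{1}{a_{k+1}} - \frac{1}{a_k} = \frac{a_k - a_{k+1}}{a_k a_{k+1}} = \frac{a_k}{a_k+a_{k+1}}.$$
For the upper bound on $\lambda_k$ this is already enough: because $a_{k+1}<a_k$, the right-hand side exceeds $1/2$, so summing from $0$ to $k-1$ against $1/a_0=1$ gives $1/a_k \geq (k+2)/2$, i.e.\ $\lambda_k \leq 4/(k+2)^2$.

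The lower bound $\lambda_k \geq 2/(k+2)^2$ is the delicate part, and I expect it to be the main obstacle: the trivial estimate $1/a_{k+1}-1/a_k < 1$ only yields $\lambda_k > 1/(k+1)^2$, which is strictly weaker than what is claimed. To sharpen it I would bound the increment by $\alpha_0$ rather than by $1$. Since $(a_k)$ decreases and $a_1=\alpha_0$, we have $a_{k+1}\leq\alpha_0$ for all $k\geq 0$, so
$$\frac{a_k}{a_k+a_{k+1}} = \frac{1}{1+\sqrt{1-a_{k+1}}} \leq \frac{1}{1+\sqrt{1-\alpha_0}} = \frac{1}{1+\alpha_0} = \alpha_0,$$
where the last two equalities again exploit the identities $1-\alpha_0=\alpha_0^2$ and $\alpha_0(1+\alpha_0)=1$ special to the golden-ratio value of $\alpha_0$. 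Telescoping then gives $1/a_k \leq 1 + k\alpha_0$.

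Finally I would close the gap with the elementary comparison $1 + k\alpha_0 \leq (k+2)/\sqrt2$, valid for every $k\geq 0$ because $\alpha_0 < 1/\sqrt2$ (favourable slope) and $1 < \sqrt2$ (favourable intercept at $k=0$). This gives $a_k \geq \sqrt2/(k+2)$, hence $\lambda_k \geq 2/(k+2)^2$, completing the proof. The crux of the whole argument is thus the observation that the special choice $\alpha_0=(\sqrt5-1)/2$ makes $1/(1+\alpha_0)=\alpha_0$ and, simultaneously, $\alpha_0 < 1/\sqrt2$: this is precisely the algebraic coincidence that upgrades the too-weak naive lower bound into the asserted $2/(k+2)^2$.
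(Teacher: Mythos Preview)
Your proof is correct. Both halves rest on the same identification $\alpha_k^2=\lambda_{k+1}$ that the paper also uses, but from there the two arguments diverge. The paper obtains the upper bound $\lambda_k\le 4/(k+2)^2$ by invoking Nesterov's Lemma~2.2.4 (Lemma~\ref{lemma:lambda}) as a black box, then turns it into the pointwise bound $\alpha_k\le 2/(k+3)$ and computes the lower bound via the telescoping \emph{product}
\[
\lambda_k=\prod_{i=0}^{k-1}(1-\alpha_i)\ge\prod_{i=0}^{k-1}\frac{i+1}{i+3}=\frac{2}{(k+1)(k+2)}\ge\frac{2}{(k+2)^2}.
\]
You instead work entirely with the telescoping \emph{sum} for $1/a_k=1/\sqrt{\lambda_k}$, proving the upper bound inline (thereby reproving the relevant case of Lemma~\ref{lemma:lambda} rather than citing it) and extracting the lower bound from the sharper increment estimate $1/a_{k+1}-1/a_k\le \alpha_0$, which hinges on the golden-ratio identities $1-\alpha_0=\alpha_0^2$ and $\alpha_0(1+\alpha_0)=1$ together with $\alpha_0<1/\sqrt2$. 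Your route is fully self-contained and makes transparent exactly where the specific value $\alpha_0=(\sqrt5-1)/2$ is used; the paper's route is shorter once Lemma~\ref{lemma:lambda} is available and yields the slightly tighter intermediate bound $\lambda_k\ge 2/((k+1)(k+2))$.
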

\begin{proof}
   Note that by definition of~$\alpha_k$, we have for all~$k \geq 1$,
   \begin{displaymath}
      \alpha_k^2 = (1-\alpha_k) \alpha_{k-1}^2 = \prod_{i=1}^k (1-\alpha_i)\alpha_0^2 = \lambda_{k+1} \frac{\alpha_0^2}{1-\alpha_0} = \lambda_{k+1}.
   \end{displaymath}
   With the choice of~$\alpha_0$, the quantity~$\gamma_0$ defined in~(\ref{eq:gamma0}) is equal to~$\kappa$. By Lemma~\ref{lemma:lambda}, we have $\lambda_k \leq \frac{4}{(k+2)^2}$ for all~$k \geq 0$ and thus $\alpha_k \leq \frac{2}{k+3}$ for all~$k \geq 1$ (it is also easy to check numerically that this is also true for $k=0$ since $\frac{\sqrt{5}-1}{2} \approx 0.62 \leqslant \frac{2}{3}$). We now have all we need to conclude the lemma:
   \begin{displaymath}
      \lambda_k = \prod_{i=0}^{k-1} (1-\alpha_i) \geq \prod_{i=0}^{k-1} \left(1- \frac{2}{i+3}\right) = \frac{2}{(k+2)(k+1)} \geq \frac{2}{(k+2)^2}.
   \end{displaymath}
\end{proof}

With this lemma in hand, we may now proceed and apply Theorem~\ref{thm}.
We have remarked in the proof of the previous lemma that $\gamma_0 = \kappa$. 
Then,
\begin{align*}
   \sqrt{S_k}+2 \sum_{i=1}^{k} \sqrt {\frac{\epsilon_i }{\lambda_{i}}}  \,=\, & \sqrt{F(x_0) - F^\star  + \frac{\kappa}{2}\|x_0-x^\star\|^2   + \sum_{i=1}^{k}\frac{\epsilon_i}{\lambda_{i}}} +2 \sum_{i=1}^{k} \sqrt {\frac{\epsilon_i }{\lambda_{i}}} \\
 \,\leqslant\, & \sqrt{ F(x_0) - F^\star  + \frac{\kappa}{2}\|x_0-x^\star\|^2}+ 3 \sum_{i=1}^{k} \sqrt {\frac{\epsilon_i }{\lambda_{i}}} \\
\,\leqslant  \,& \sqrt{\frac{\kappa}{2} \Vert x_0 - x^*\Vert^2}+ \sqrt{F(x_0)-F^*}\left (1+ \sum_{i=1}^k \frac{1}{(i+2)^{1+\eta/2}} \right),
\end{align*}
where the last inequality uses Lemma~\ref{lemma:lambdak} to upper-bound the ratio $\varepsilon_i/\lambda_i$.
Moreover,
$$  \sum_{i=1}^{k} \frac{1}{(i+2)^{1+\eta/2}}  \leqslant \sum_{i=2}^{\infty} \frac{1}{i ^{1+\eta/2}} \leqslant \int_1^{\infty} \frac{1}{x^{1+\eta/2}} \, \text{d}x = \frac{2}{\eta}.$$
Therefore, by (\ref{F}) from Theorem~\ref{thm},
\begin{align*}
F(x_{k}) -F^* \leqslant \,\,  & \lambda_{k} \left ( \sqrt{S_k}+ 2\sum_{i=1}^{k} \sqrt {\frac{\epsilon_i }{\lambda_{i}}} \right )^2 \\
		    \leqslant \,\, & \frac{4}{(k+2)^2} \left ( \sqrt{F(x_0)-F^*}\left(1+\frac{2}{\eta}\right)+ \sqrt{\frac{\kappa}{2} \Vert x_0 - x^*\Vert^2} \right )^2 \\
		    \leqslant \,\, & \frac{8}{(k+2)^2} \left ( \left( 1+ \frac{2}{\eta}\right)^2 (F(x_0) -F^*)+\frac{\kappa}{2} \Vert x_0 - x^*\Vert^2 \right ).  
\end{align*}
The last inequality uses $(a+b)^2 \leqslant 2(a^2+b^2)$.

\subsection{Proof of Proposition~\ref{prop:complexity2}}

When $\mu=0$, we remark that Proposition~\ref{6.11} still holds but Lemma~\ref{6.13} does not.
The main difficulty is thus to find another way to control the quantity $\Vert y_{k-1} - y_{k-2} \Vert$. 

Since $F(x_k)-F^*$ is bounded by Theorem~\ref{convergence2}, we may use the bounded level set assumptions to ensure that
there exists $B>0$ such that $\Vert x_k - x^* \Vert \leqslant B$ for any $k \geq 0$ where~$x^\star$ is a minimizer of~$F$. 
We can now follow similar steps as in the proof of Lemma~\ref{6.13}, and show that
$$  \Vert y_{k-1} - y_{k-2} \Vert^2 \leqslant 36 B^2.$$
Then by Proposition~\ref{6.11}, 
$$  G_k(x_{k-1})-G_k^* \leqslant 2\epsilon_{k-1} + 36\kappa B^2. $$
Since $\kappa >0$, $G_k$ is strongly convex, then using the same argument as in the strongly convex case, the number of calls for $\mtd$ is given by 
\begin{equation}
   \left \lceil \frac{1}{\tau_{\mtd}} \ln \left ( \frac{ A (G_k(x_{k-1}) - G_k^*)}{\epsilon_k}  \right ) \right \rceil. \label{eq:ratemuzero}
\end{equation}
Again, we need to upper bound it
\begin{equation*}
   \frac{G_k(x_{k-1}) -G_k^* }{\epsilon_k} \leqslant \frac{2\epsilon_{k-1}+ 36\kappa B^2 }{\epsilon_k} = \frac{2 (k+1)^{4+\eta}}{(k+2)^{4+\eta}} + \frac{162 \kappa B^2 (k+2)^{4+\eta}}{(F(x_0)-F^*)}.
\end{equation*}
The right hand side is upper-bounded by $O((k+2)^{4+\eta})$. Plugging this relation into~(\ref{eq:ratemuzero}) gives the desired result.

\section{Derivation of Global Convergence Rates}\label{sec:accel-factor-calculs}
We give here a generic ``template'' for computing the \emph{optimal choice of $\kappa$} to accelerate a given algorithm $\mtd$,
and therefore compute the rate of convergence of the accelerated algorithm $\mathcal{A}$. 

We assume
here that $\mtd$ is a \emph{randomized} first-order optimization algorithm, \textit{i.e.} the iterates $(x_k)$ generated by $\mtd$ 
are a sequence of random variables; specialization to a deterministic algorithm is straightforward. Also, for the sake of simplicity, 
we shall use simple notations to denote the stopping time to reach accuracy $\varepsilon$. Definition and notation using filtrations, 
$\sigma$-algebras, etc. are unnecessary for our purpose here where the quantity of interest has a clear interpretation. 

Assume that algorithm $\mtd$ enjoys a linear rate of convergence, in expectation. There exists constants $\C_{\mtd,F}$ and $\tau_{\mtd,F}$ such that 
the sequence of iterates $(x_k)_{k \geq 0}$ for minimizing a strongly-convex objective~$F$ satisfies 
\begin{equation}
\label{eq:rategk-expect}
\E 
\left[ 
F(x_k) -F^* 
\right] 
\leq
\C_{\mtd,F}
\left(
1 - \tau_{\mtd,F}
\right)^k.
\end{equation}
Define the random variable $T_{\mtd,F}(\varepsilon)$ (stopping time) corresponding 
to the minimum number of iterations to guarantee an accuracy $\varepsilon$ in the course of running $\mtd$ 
\begin{equation}
T_{\mtd,F}(\varepsilon) 
:= \inf 
\left\{
k \geq 1, \: F(x_k) -F^*  \leq \varepsilon
\right\}
\end{equation} 
Then, an upper bound on the expectation is provided by the following lemma.
\begin{lemma}[\bfseries Upper Bound on the expectation of~$T_{\mtd,F}(\varepsilon)$]~\label{lemma:expectation}\newline
   Let $\mtd$ be an optimization method with the expected rate of convergence~(\ref{eq:rategk-expect}). 
Then,
\begin{equation} 
 \label{eq:def-T-mtd-rand-major}
 \E[T_{\mtd}(\varepsilon)] \leqslant \frac{1}{\tau_{\mtd}} \ln \left (\frac{ 2 C_{\mtd}}{  \tau_\mtd  \cdot \varepsilon }\right ) +1 = \Otilde\left( \frac{1}{\tau_{\mtd}} \ln \left (\frac{ C_{\mtd}}{  \varepsilon }\right )\right),
\end{equation}
where we have dropped the dependency in~$F$ to simplify the notation.
\end{lemma}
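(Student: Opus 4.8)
The plan is to use the tail-sum formula for the expectation of a nonnegative integer-valued random variable, $\E[T_\mtd(\varepsilon)] = \sum_{k \geq 0} \p[T_\mtd(\varepsilon) > k]$, and to control each tail probability through the expected rate~(\ref{eq:rategk-expect}). First I would note that, by the definition of $T_\mtd(\varepsilon)$ as the first iteration index at which the optimality gap falls to~$\varepsilon$, the event $\{T_\mtd(\varepsilon) > k\}$ forces $F(x_k) - F^\star > \varepsilon$ for every $k \geq 1$; hence $\p[T_\mtd(\varepsilon) > k] \leq \p[F(x_k) - F^\star > \varepsilon]$. Applying Markov's inequality to the nonnegative variable $F(x_k) - F^\star$ and invoking~(\ref{eq:rategk-expect}) then gives the key estimate $\p[T_\mtd(\varepsilon) > k] \leq \min\bigl\{ 1, (C_\mtd/\varepsilon)(1-\tau_\mtd)^k \bigr\}$, where the trivial bound~$1$ is retained for the small indices at which the geometric term still exceeds one.

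I would then split the resulting series at the crossover index. For an integer threshold $K$, bounding the first $K$ tail probabilities by~$1$ and the remaining ones by the geometric estimate yields $\E[T_\mtd(\varepsilon)] \leq K + (C_\mtd/\varepsilon)(1-\tau_\mtd)^K / \tau_\mtd$, the tail being a convergent geometric sum with ratio $1-\tau_\mtd$. Using the elementary inequality $1-\tau_\mtd \leq e^{-\tau_\mtd}$, it suffices to pick $K$ of order $(1/\tau_\mtd)\ln\bigl( C_\mtd/(\tau_\mtd \varepsilon) \bigr)$ so that $(C_\mtd/\varepsilon)(1-\tau_\mtd)^K$ becomes $O(\tau_\mtd)$, which makes the geometric tail a constant of order one. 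Collecting the head term $K$ and this constant produces a bound of the announced form $(1/\tau_\mtd)\ln\bigl( 2 C_\mtd/(\tau_\mtd \varepsilon)\bigr) + 1$.

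The routine part is the bookkeeping of constants: the additive~$+1$ and the precise factor inside the logarithm come from rounding $K$ up to an integer and from the exact choice of the split point, while the bound $-\ln(1-\tau_\mtd) \geq \tau_\mtd$ (equivalently $1-\tau_\mtd \leq e^{-\tau_\mtd}$) is what converts the geometric decay into the clean $1/\tau_\mtd$ factor. The only mild subtlety I anticipate is ensuring the split is handled uniformly in $\tau_\mtd \in (0,1)$ and that $T_\mtd(\varepsilon)$ is almost surely finite, both of which follow from $\p[T_\mtd(\varepsilon) > k] \to 0$. The final $\Otilde$ expression is then immediate once one absorbs the logarithmic factor $\ln(1/\tau_\mtd)$ and the additive constant into the $\Otilde$ notation.
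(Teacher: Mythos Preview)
Your proposal is correct and follows essentially the same route as the paper: tail-sum formula, Markov's inequality applied to $F(x_k)-F^\star$ via~(\ref{eq:rategk-expect}), a split of the series at a threshold where the geometric bound drops below~$1$, and the elementary inequality relating $1-\tau_\mtd$ and $e^{-\tau_\mtd}$ to extract the $1/\tau_\mtd$ factor. The paper carries out exactly these steps, only applying the bound $(1-\tau)^k \leq e^{-k\tau}$ slightly earlier and using the companion inequality $\tau/2 \leq 1-e^{-\tau}$ at the end to obtain the precise constant~$2$ inside the logarithm.
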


\begin{proof}
We abbreviate $\tau_\mtd$ by $\tau$. Set 
$$T_0 =  \frac{1}{\tau}\log \left ( \frac{1}{1- e^{-\tau}} \frac{C_{\mtd}}{\varepsilon} \right ). $$ 
For any $k \geq 0$, we have 
$$ \E[F(x_k) - F^*]  \leqslant C_{\mtd} (1-\tau)^k  \leqslant C_{\mtd}  \, e^{-k \tau}. $$
By Markov's inequality,
\begin{equation}
   \p [F(x_k) - F^* > \epsilon] = \p [T_{\mtd}(\epsilon) > k] \leqslant \frac{\E[F(x_k) - F^*]}{\epsilon} \leqslant \frac{C_{\mtd}  \, e^{-k \tau}}{\epsilon}.
\end{equation}
Together with the fact $\p \leqslant 1$ and $k \geq 0$. We have
$$ \p [T_{\mtd}(\epsilon) \geqslant k+1] \leqslant \min \left \{ \frac{C_{\mtd}}{\epsilon} e^{- k \tau } ,1 \right \}. $$
Therefore,
\begin{align*}
   \E[T_{\mtd}(\epsilon)] & = \sum_{k=1}^{\infty} \p[T_{\mtd}(\epsilon) \geqslant k] = \sum_{k=1}^{T_0} \p[T_{\mtd}(\epsilon) \geqslant k] + \sum_{k=T_0+1}^{\infty} \p[T_{\mtd}(\epsilon) \geqslant k] \\
                          & \leqslant T_0 + \sum_{k=T_0}^{\infty} \frac{C_{\mtd}}{\epsilon} e^{-k \tau } = T_0  +  \frac{C_{\mtd}}{\epsilon}  e^{- T_0 \tau} \sum_{k=0}^{\infty} e^{-k \tau} \\
                          &  = T_0  +  \frac{C_{\mtd}}{\epsilon}  \frac{ e^{- \tau T_0}}{1- e^{-\tau}}  = T_0 +1.
\end{align*}
As simple calculation shows that for any $\tau \in (0,1)$, $\frac{\tau}{2} \leqslant 1-e^{-\tau}$ and then
$$ \E[T_{\mtd}(\epsilon)]  \leqslant T_0 +1 =  \frac{1}{\tau}\log \left ( \frac{1}{1- e^{-\tau}} \frac{C_{\mtd}}{\epsilon} \right )+1 \leqslant  \frac{1}{\tau}\log \left ( \frac{2 C_{\mtd}}{ \tau \epsilon} \right )+1.$$
\end{proof}

Note that the previous lemma mirrors Eq.~(\ref{eq:def-T-mtd-direct}-\ref{eq:def-T-mtd-major}) in the proof 
of Prop.~\ref{convergence} in Appendix~\ref{sec:proofs}.  For all optimization
methods of interest, the rate~$\tau_{\mtd,G_k}$ is independent of~$k$ and
varies with the parameter~$\kappa$.
We may now compute the iteration-complexity (in expectation) of the accelerated
algorithm~$\Acal$---that is, for a given~$\varepsilon$, the expected total
number of iterations performed by the method~$\mtd$.  Let us now
fix~$\varepsilon > 0$.
Calculating the iteration-complexity decomposes into three steps:
\begin{enumerate}
\item Find $\kappa$ that maximizes the ratio $\tau_{\mtd,G_k}/\sqrt{\mu
   +\kappa}$~for algorithm $\mtd$ when~$F$ is~$\mu$-strongly convex. In the
   non-strongly convex case, we suggest maximizing instead the ratio
   $\tau_{\mtd,G_k}/\sqrt{L+\kappa}$. Note that the choice of~$\kappa$ is less
   critical for non-strongly convex problems since it only affects
   multiplicative constants in the global convergence rate.
\item Compute the upper-bound of the number of outer iterations
   $k_{\text{out}}$ using Theorem~\ref{convergence} (for the strongly convex
   case), or Theorem~\ref{convergence2} (for the non-strongly convex case), by
   replacing $\kappa$ by the optimal value found in step 1.
\item Compute the upper-bound of the expected number of inner iterations 
   $$\max_{k=1,\ldots,k_{\text{out}}}\E [T_{\mtd,G_k}(\varepsilon_k)] \leq k_{\text{in}},$$
by replacing the appropriate quantities in Eq.~\ref{eq:def-T-mtd-rand-major}
for algorithm $\mtd$; for that purpose, the proofs of
Propositions~\ref{prop:complexity} of~\ref{prop:complexity2} my be used to
upper-bound the ratio~$\C_{\mtd,G_k}/\varepsilon_k$, or another dedicated analysis for~$\mtd$ may be required if the constant~$\C_{\mtd,G_k}$ does not have the required form~$A (G_k(z_{0})-G_k^\star)$ in~(\ref{eq:rategk}).
\end{enumerate}
Then, the iteration-complexity (in expectation) denoted $\text{Comp.}$ is given by
\begin{equation}
\text{Comp} \leq k_{\text{in}} \times k_{\text{out}} \; .
\end{equation}

\section{A Proximal MISO/Finito Algorithm}\label{appendix:miso}
In this section, we present the algorithm MISO/Finito, and show how to extend
it in two ways.  First, we propose a proximal version to deal with composite
optimization problems, and we analyze its rate of convergence.
Second, we show how to remove a large sample condition~$n \geq 2L/\mu$, which was
necessary for the convergence of the algorithm. The resulting algorithm is a
variant of proximal SDCA~\cite{sdca} with a different stepsize and a stopping
criterion that does not use duality. 

\subsection{The Original Algorithm MISO/Finito}
MISO/Finito was proposed in \cite{miso} and~\cite{finito} for solving the following smooth unconstrained convex minimization problem 
\begin{equation}
   \min_{x \in \Real^p} \left\{ f(x) \defin \frac{1}{n} \sum_{i=1}^{n} f_i(x) \right \}, \label{eq:objmiso}
\end{equation}
where each $f_i$ is differentiable with $L$-Lipschitz continuous derivatives and $\mu$-strongly convex. 
At iteration~$k$, the algorithm updates a list of lower bounds $d_i^{k}$ of the
functions~$f_i$, by randomly picking up one index~$i_k$ among $\{ 1, \cdots, n\}$ and performing the following update
\begin{displaymath}
   d_i^k(x) = \left\{ 
      \begin{array}{ll}
         f_i(x_{k-1}) + \langle \nabla f_i(x_{k-1}) ,x - x_{k-1} \rangle + \frac{\mu}{2}\|x-x_{k-1}\|^2 & \text{if}~~~  i=i_k \\
         d_i^{k-1}(x) & \text{otherwise} 
      \end{array}\right.,
\end{displaymath}
which is a lower bound of~$f_i$ because of the $\mu$-strong convexity of~$f_i$.
Equivalently, one may perform the following updates
\begin{displaymath}
   z_i^k = \left\{ 
      \begin{array}{ll}
         x_{k-1} - \frac{1}{\mu} \nabla f_i(x_{k-1}) & \text{if}~~~  i=i_k \\
         z_i^{k-1} & \text{otherwise} 
      \end{array}\right.,
\end{displaymath}
and all functions $d_i^k$ have the form
\begin{displaymath}
   d_i^k(x) =   c_i^k + \frac{\mu}{2}\|x-z_{i}^k\|^2,
\end{displaymath}
where~$c_i^k$ is a constant.
Then, MISO/Finito performs the following minimization to produce the iterate~$(x_k)$:
\begin{displaymath}
   x_k = \argmin_{x \in \Real^p} \frac{1}{n}\sum_{i=1}^n d_i^k(x) = \frac{1}{n}\sum_{i=1}^n z_i^k,
\end{displaymath}
which is equivalent to
\begin{displaymath}
   x_k \leftarrow x_{k-1} - \frac{1}{n}\left( z_{i_k}^k - z_{i_k}^{k-1}\right).
\end{displaymath}
In many machine learning problems, it is worth remarking that each function~$f_i(x)$ has the specific form $f_i(x) =
l_i(\langle x, w_i \rangle) + \frac{\mu}{2}\|x\|^2$. In such cases, the vectors~$z_i^k$ can
be obtained by storing only~$O(n)$ scalars.\footnote{Note that even though we call this algorithm MISO (or Finito), it was called
MISO$\mu$ in~\cite{miso}, whereas ``MISO'' was originally referring to an
incremental majorization-minimization procedure that uses upper bounds of the
functions~$f_i$ instead of lower bounds, which is appropriate for non-convex optimization problems.}
The main convergence result of~\cite{miso} is that the procedure above 
converges with a linear rate of convergence of the form~(\ref{mtd}),
with~$\tau_{\textrm{MISO}} = 1/3n$ (also refined in $1/2n$ in~\cite{finito}), when
the large sample size constraint~$n\geq 2L/\mu$ is satisfied.

Removing this condition and extending MISO to the composite optimization problem~(\ref{eq:obj}) 
is the purpose of the next section.

\subsection{Proximal MISO}
We now consider the composite optimization problem below,
$$   \min_{x \in \Real^p} \left \{ F(x) = \frac{1}{n} \sum_{i=1}^{n} f_i(x) +\reg(x) \right \},$$
where the functions~$f_i$ are differentiable with~$L$-Lipschitz derivatives and~$\mu$-strongly convex.
As in typical composite optimization problems, $\reg$ is convex but not
necessarily differentiable. We assume that the proximal operator of $\psi$ can
be computed easily. 
The algorithm needs to be initialized with some lower bounds for the functions~$f_i$:
\begin{equation} \label{A1}
    f_i(x) \geqslant \frac{\mu}{2} \Vert x - z_i^0 \Vert^2 + c_i^0, \tag{A1}
\end{equation}
which are guaranteed to exist due to the~$\mu$-strong convexity of~$f_i$. 
For typical machine learning applications, such initialization is easy.
For example, logistic regression with $\ell_2$-regularization satisfies
(\ref{A1}) with $z_i^0=0$ and $c_i^0 = 0$. Then, the MISO-Prox scheme is given in Algorithm~\ref{alg:miso}.  Note that if no simple initialization is
available, we may consider any initial estimate~$\barz_0$ in~$\Real^p$ and
define $z_i^0 = \barz_0 - ({1}/{\mu}) \nabla f_i (\barz_0)$, which requires
performing one pass over the data.

Then, we remark that under the large sample size condition $n \geq 2L/\mu$, we
have~$\delta=1$ and the update of the quantities~$z_i^k$
in~(\ref{eq:misoupdate}) is the same as in the original MISO/Finito algorithm.
As we will see in the convergence analysis, the choice of~$\delta$ ensures convergence
of the algorithm even in the small sample size regime~$n < 2L/\mu$.

 \begin{algorithm}
    \caption{MISO-Prox: an improved MISO algorithm with proximal support.}\label{alg:miso}
     \begin{algorithmic}[1]
      \INPUT $(z_i^0)_{i=1,\ldots,n}$ such that~(\ref{A1}) holds;  $N$ (number of iterations);
        \STATE initialize $\barz_0=\frac{1}{n}\sum_{i=1}^n z_i^0$ and $x_0 = \prox_{\psi/\mu}[\barz_0]$;
        \STATE define $\delta = \min\left(1,\frac{\mu n}{2(L-\mu)}\right)$;
        \FOR{ $k=1,\ldots,N$}
        \STATE randomly pick up an index~$i_k$ in~$\{1,\ldots,n\}$;
        \STATE update
        \begin{equation}\label{eq:misoupdate}
           \begin{split}
              z_i^k & = \left\{ 
              \begin{array}{ll}
                 (1-\delta)z_i^{k-1} + \delta\left(x_{k-1} - \frac{1}{\mu} \nabla f_i(x_{k-1})\right) & \text{if}~~~  i=i_k \\
                 z_i^{k-1} & \text{otherwise} 
              \end{array}\right. \\
              \barz_k & = \barz_{k-1} - \frac{1}{n}\left( z_{i_k}^k - z_{i_k}^{k-1}\right) = \frac{1}{n}\sum_{i=1}^n z_{i}^k\\
              x_k & = \prox_{\psi/\mu}[\barz_k].
           \end{split}
        \end{equation}
        \ENDFOR
        \OUTPUT $x_N$ (final estimate).
  \end{algorithmic}
 \end{algorithm}

 \paragraph{Relation with Proximal SDCA~\cite{sdca}.}
The algorithm MISO-Prox is almost identical to variant~$5$ of proximal
SDCA~\cite{sdca}, which performs the same updates with~$\delta = \mu n/(L+\mu n)$ instead
of~$\delta=\min(1,\frac{\mu n}{2(L-\mu)})$. It is however not clear that 
MISO-Prox actually performs dual ascent steps in the sense of SDCA since the proof
of convergence of SDCA cannot be directly modified to use the stepsize of
proximal MISO and furthermore, the convergence proof of MISO-Prox does not use
the concept of duality.
Another difference lies in the optimality certificate of the
algorithms. Whereas Proximal-SDCA provides a certificate in terms of linear convergence
of a duality gap based on Fenchel duality, Proximal-SDCA ensures linear
convergence of a gap that relies on strong convexity but not on the Fenchel dual (at least explicitly).

\paragraph{Optimality Certificate and Stopping Criterion.}
Similar to the original MISO algorithm, Proximal MISO maintains a list~$(d_i^k)$ of lower bounds of the functions~$f_i$, 
which are updated in the following fashion
\begin{equation}\label{eq:dk}
   d_i^k(x) = \!\left\{ 
      \begin{array}{ll}
         (1-\delta)d_i^{k-1}(x) \!+ \delta\left(f_i(x_{k-1}) \!+\! \langle \nabla f_i(x_{k-1}) ,x - x_{k-1} \rangle \!+\! \frac{\mu}{2}\|x-x_{k-1}\|^2\right) & \text{if}~~~  i=i_k \\
         d_i^{k-1}(x) & \text{otherwise} 
      \end{array}\right.
\end{equation}
Then, the following function is a lower bound of the objective~$F$:
\begin{equation}
   D_k(x) = \frac{1}{n}\sum_{i=1}^n d_i^k(x)  + \psi(x), \label{eq:Dk}
\end{equation}
and the update~(\ref{eq:misoupdate}) can be shown to exactly minimize~$D_k$. As a lower bound of~$F$, we have that~$D_k(x_k) \leq F^\star$ and thus
\begin{displaymath}
   F(x_k) - F^\star \leq F(x_k) - D_k(x_k).
\end{displaymath}
The quantity $F(x_k) - D_k(x_k)$ can then be interpreted as an optimality gap,
and the analysis below will show that it converges linearly to zero.
In practice, it also provides a convenient stopping criterion, which 
yields Algorithm~\ref{alg:miso2}.

\begin{algorithm}
   \caption{MISO-Prox with stopping criterion.}\label{alg:miso2}
    \begin{algorithmic}[1]
       \INPUT $(z_i^0,c_i^0)_{i=1,\ldots,n}$ such that~(\ref{A1}) holds;  $\varepsilon$ (target accuracy);
       \STATE initialize $\barz_0= \frac{1}{n}\sum_{i=1}^n z_i^0$ and~$c_i^{\prime 0} = c_i^0 + \frac{\mu}{2}\|\barz_0\|^2$ for all $i$ in~$\{1,\ldots,n\}$ and $x_0 = \prox_{\psi/\mu}[\barz_0]$;
       \STATE Define $\delta = \min\left(1,\frac{\mu n}{2(L-\mu)}\right)$ and~$k=0$;
       \WHILE{ $\frac{1}{n}\sum_{i=1}^n f_i(x_k) - c_i^{\prime k} + \mu\langle \barz_k, x_k\rangle - \frac{\mu}{2}\|x_k\|^2 > \varepsilon$ }
       \FOR{ $l = 1,\ldots,n$ } 
       \STATE $k \leftarrow k+1$;
       \STATE randomly pick up an index~$i_k$ in~$\{1,\ldots,n\}$;
       \STATE perform the update~(\ref{eq:misoupdate});
       \STATE update 
       \begin{equation}\label{eq:cprime}
          c_i^{\prime k} = \left\{  \begin{array}{ll}
                (1-\delta) c_i^{\prime k-1} + \delta \left( f_i(x_{k-1}) - \langle \nabla f_i(x_{k-1}), x_{k-1} \rangle +\frac{\mu}{2}\|x_{k-1}\|^2  \right) & \text{if}~~~ i=i_k \\
                c_i^{\prime k-1} & \text{otherwise}
             \end{array} \right. .
       \end{equation}
       \ENDFOR
       \ENDWHILE
       \OUTPUT $x_N$ (final estimate such that~$F(x_N)-F^\star \leq \varepsilon$).
\end{algorithmic}
\end{algorithm}
To explain the stopping criterion in Algorithm~\ref{alg:miso2}, we remark that the functions~$d_i^k$ are quadratic and can be written
\begin{equation} \label{eq:dk2}
   d_i^k(x) =   c_i^k + \frac{\mu}{2}\|x-z_{i}^k\|^2 = c_i^{\prime k} - \mu \langle x,z_i^k \rangle + \frac{\mu}{2}\|x\|^2,
\end{equation}
where the~$c_i^k$'s are some constants and $c_i^{\prime k} = c_i^k + \frac{\mu}{2}\|z_i^k\|^2$.
Equation~(\ref{eq:cprime}) shows how to update recursively these constants~$c_i^{\prime k}$, and finally
\begin{displaymath}
   D_k(x_k) = \left(\frac{1}{n}\sum_{i=1}^n c_i^{\prime k}\right) - \mu \langle x_k, \barz_k \rangle + \frac{\mu}{2}\|x_k\|^2 + \psi(x_k),
\end{displaymath}
and
\begin{displaymath}
   F(x_k) - D_k(x_k)  = \left(\frac{1}{n}\sum_{i=1}^n f_i(x_k)- c_i^{\prime k}\right) + \mu \langle x_k, \barz_k \rangle - \frac{\mu}{2}\|x_k\|^2,
\end{displaymath}
which justifies the stopping criterion. Since computing $F(x_k)$ requires
scanning all the data points, the criterion is only computed every~$n$
iterations. 

\paragraph{Convergence Analysis.}
The convergence of MISO-Prox is guaranteed by Theorem~\ref{thm1.1} from
the main part of paper.
Before we prove this theorem, we note that this rate is slightly better than
the one proven in MISO~\cite{miso}, which converges as $(1-\frac{1}{3n})^k$.
We start by recalling a classical lemma that provides useful inequalities. Its proof 
may be found in~\cite{nesterov}.

\begin{lemma}[\bfseries Classical Quadratic Upper and Lower Bounds]\label{rem1.3}~\newline
   For any function~$g: \Real^p \to \Real$ which is $\mu$-strongly convex and differentiable with $L$-Lipschitz derivatives, we have for all~$x,y$ in~$\Real^p$,
   $$  \frac{\mu}{2} \Vert x- y \Vert^2 \leq g(x) - g(y) + \langle \nabla g (y), x - y \rangle \leq \frac{L}{2} \Vert x- y \Vert^2. $$
\end{lemma}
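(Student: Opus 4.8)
The statement collects the two workhorse inequalities of smooth convex optimization: the left-hand bound is exactly the gradient characterization of $\mu$-strong convexity, while the right-hand bound is the \emph{descent lemma}, which follows from having an $L$-Lipschitz gradient. The natural quantity being controlled here is the Bregman-type residual $g(x)-g(y)-\langle \nabla g(y),x-y\rangle$, and my plan is to establish each side independently: the upper bound via a one-dimensional integral of the gradient along the segment from $y$ to $x$, and the lower bound via the auxiliary function $g-\tfrac{\mu}{2}\|\cdot\|^2$.

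For the upper bound, I would apply the fundamental theorem of calculus to the scalar map $t \mapsto g(y+t(x-y))$ on $[0,1]$, which yields
\[
 g(x)-g(y)-\langle \nabla g(y),\, x-y\rangle = \int_0^1 \langle \nabla g(y+t(x-y)) - \nabla g(y),\, x-y\rangle\,\mathrm{d}t .
\]
Bounding the integrand by Cauchy--Schwarz and then invoking the $L$-Lipschitz property $\|\nabla g(y+t(x-y))-\nabla g(y)\| \leq Lt\|x-y\|$, the right-hand side is at most $\int_0^1 Lt\|x-y\|^2\,\mathrm{d}t = \tfrac{L}{2}\|x-y\|^2$, which is the claimed upper inequality.

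For the lower bound, I would set $h \defin g - \tfrac{\mu}{2}\|\cdot\|^2$. By definition, $\mu$-strong convexity of $g$ is precisely convexity of $h$, and $h$ is differentiable with $\nabla h(y) = \nabla g(y) - \mu y$. The first-order characterization of convexity gives $h(x) \geq h(y) + \langle \nabla h(y), x-y\rangle$; substituting the definition of $h$ and expanding the quadratic terms, the purely quadratic remainder regroups, using $\|x\|^2-\|y\|^2-2\langle y,x-y\rangle = \|x-y\|^2$, into $\tfrac{\mu}{2}\|x-y\|^2$, which is the claimed lower inequality.

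There is no genuine obstacle here---the paper itself points to \cite{nesterov} for the proof, and both steps are textbook. The only points requiring a little care are bookkeeping: justifying the integral representation (differentiability of $g$ along the whole segment, which is assumed), keeping the sign of the middle term consistent throughout, and invoking the strong-convexity hypothesis in its gradient/auxiliary-convex-function form so that only convexity of $h$---and no extra smoothness---is needed for the lower bound.
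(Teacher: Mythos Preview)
Your proof is correct and entirely standard. The paper does not actually supply its own proof of this lemma; it simply writes ``Its proof may be found in~\cite{nesterov}'' and moves on, so there is nothing substantive to compare against---your integral argument for the upper bound and the auxiliary-function argument for the lower bound are precisely the derivations one finds in Nesterov's book.

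One minor remark on the statement itself: as printed, the middle expression is $g(x)-g(y)+\langle\nabla g(y),x-y\rangle$, with a plus sign, which is almost certainly a typo for $g(x)-g(y)-\langle\nabla g(y),x-y\rangle$. You (rightly) prove the latter, and the paper's subsequent use of the lemma (in the proof of the ``Lower Bound on $D_k$'' lemma immediately afterward) confirms that the minus-sign version is what was intended.
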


To start the proof, we need a sequence of upper and lower bounds involving the functions~$D_{k}$ and $D_{k-1}$. The first one is given in the next lemma
\begin{lemma}[\bfseries Lower Bound on~$D_k$]~\newline
  For all~$k \geq 1$ and~$x$ in~$\Real^p$,
   \begin{equation}\label{eq6}
      \Surr_k(x) \geqslant \Surr_{k-1}(x) - \frac{\delta (L-\mu) }{2n} \Vert x - x_{k-1} \Vert^2, \quad \forall x \in \mathbb{R}^p.
   \end{equation}
\end{lemma}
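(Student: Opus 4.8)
The plan is to compute the one-step difference $D_k - D_{k-1}$ explicitly and then bound it below using two facts about the $f_i$: that each stored quadratic $d_i^{k-1}$ genuinely underestimates $f_i$, and the $L$-smoothness upper bound. First I would observe that in $D_k(x)-D_{k-1}(x) = \frac1n\sum_{i=1}^n\big(d_i^k(x)-d_i^{k-1}(x)\big)$ the regularizer $\psi$ cancels, and by the update rule~(\ref{eq:dk}) every index $i\neq i_k$ contributes zero. Writing $\ell_k(x) \defin f_{i_k}(x_{k-1}) + \langle \nabla f_{i_k}(x_{k-1}), x-x_{k-1}\rangle + \frac{\mu}{2}\|x-x_{k-1}\|^2$ for the strong-convexity lower bound of $f_{i_k}$ tangent at $x_{k-1}$, the update replaces $d_{i_k}^{k-1}$ by $(1-\delta)d_{i_k}^{k-1}+\delta\,\ell_k$, so
\[
D_k(x)-D_{k-1}(x) = \frac{\delta}{n}\big(\ell_k(x) - d_{i_k}^{k-1}(x)\big).
\]
It therefore suffices to show $\ell_k(x)-d_{i_k}^{k-1}(x) \geq -\tfrac{L-\mu}{2}\|x-x_{k-1}\|^2$ for all $x$.

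Second, I would prove by induction on $k$ that each $d_i^{k}$ is a global lower bound of $f_i$, i.e. $d_i^k(x)\leq f_i(x)$ for all $x$. The base case is exactly the initialization assumption~(\ref{A1}), which states $f_i(x)\geq \frac{\mu}{2}\|x-z_i^0\|^2+c_i^0 = d_i^0(x)$. For the inductive step, only $d_{i_k}^k$ changes, and it equals the convex combination $(1-\delta)d_{i_k}^{k-1}+\delta\,\ell_k$ with $\delta\in[0,1]$; since both $d_{i_k}^{k-1}\leq f_{i_k}$ (induction hypothesis) and $\ell_k\leq f_{i_k}$ (by $\mu$-strong convexity of $f_{i_k}$) hold pointwise, their convex combination is again $\leq f_{i_k}$. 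In particular this yields $d_{i_k}^{k-1}(x)\leq f_{i_k}(x)$, which is the first ingredient.

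Third, I would invoke the right-hand (smoothness) inequality of Lemma~\ref{rem1.3} applied to $g=f_{i_k}$ at $y=x_{k-1}$, namely $f_{i_k}(x)\leq f_{i_k}(x_{k-1}) + \langle \nabla f_{i_k}(x_{k-1}), x-x_{k-1}\rangle + \frac{L}{2}\|x-x_{k-1}\|^2$. Subtracting the extra $\frac{L-\mu}{2}\|x-x_{k-1}\|^2$ and comparing with the definition of $\ell_k$ gives $\ell_k(x)\geq f_{i_k}(x)-\frac{L-\mu}{2}\|x-x_{k-1}\|^2$. Combining this with $d_{i_k}^{k-1}(x)\leq f_{i_k}(x)$ from the previous step yields
\[
\ell_k(x)-d_{i_k}^{k-1}(x)\;\geq\; f_{i_k}(x)-\tfrac{L-\mu}{2}\|x-x_{k-1}\|^2 - f_{i_k}(x) \;=\; -\tfrac{L-\mu}{2}\|x-x_{k-1}\|^2,
\]
and plugging this into the displayed expression for $D_k(x)-D_{k-1}(x)$ gives exactly~(\ref{eq6}).

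The computation is essentially routine once the decomposition into a single damped index is made; the quadratic curvatures $\tfrac{\mu}{2}\|x\|^2$ of $\ell_k$ and $d_{i_k}^{k-1}$ coincide, so the whole estimate reduces to sandwiching two lower bounds of $f_{i_k}$ against $f_{i_k}$ itself, one controlled by $L$-smoothness and one exact. The only point requiring genuine care is the induction establishing that the damped update with factor $\delta$ preserves the lower-bound property; this is precisely the step that makes the modified stepsize $\delta=\min(1,\mu n/2(L-\mu))$ compatible with the analysis and that I expect to be the main (if minor) obstacle.
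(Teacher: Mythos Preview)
Your proof is correct and follows essentially the same route as the paper: both arguments reduce to bounding $d_{i_k}^k - d_{i_k}^{k-1}$ using the $L$-smoothness of $f_{i_k}$ to get $\ell_k(x)\geq f_{i_k}(x)-\tfrac{L-\mu}{2}\|x-x_{k-1}\|^2$ together with the lower-bound property $d_{i_k}^{k-1}\leq f_{i_k}$. You are slightly more explicit in spelling out the induction that preserves $d_i^k\leq f_i$ through the damped update, which the paper simply asserts.
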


\begin{proof}
For any $i\in \{1,\ldots,n\}$, $f_i$ satisfies the assumptions of Lemma~\ref{rem1.3}, and we have for all~$k \geq 0$,~$x$ in~$\Real^p$, and for~$i=i_k$,
   \begin{align*}
      \surr_{i}^k (x) &= (1-\delta) \surr_i^{k-1}(x) + \delta [ f_i(x_{k-1}) + \langle \nabla f_i (x_{k-1}), x - x_{k-1} \rangle + \frac{\mu}{2} \Vert x- x_{k-1} \Vert^2]  \\
                                             &\geqslant (1-\delta) \surr_i^{k-1}(x) + \delta f_i(x) -  \frac{\delta  (L-\mu)}{2} \Vert x - x_{k-1} \Vert^2 \\
                                                                  & \geqslant \surr_i^{k-1}(x) -  \frac{\delta  (L-\mu)}{2} \Vert x - x_{k-1} \Vert^2,
   \end{align*}
   where the definition of $d_i^k$ is given in~(\ref{eq:dk}). The first inequality uses Lemma~\ref{rem1.3}, and the last one uses the inequality~$f_i \geq d_i^{k-1}$. 
   From this inequality, we can obtain~(\ref{eq6}) by simply using
   $\Surr_k(x) = \sum_{i=1}^n \surr_i^k(x) + \reg(x) = \Surr_{k-1}(x) + \frac{1}{n} \left (\surr^k_{\, i_k}(x) - \surr^{k-1}_{\, i_k}(x) \right )$.
\end{proof}
Next, we prove the following lemma to compare $\Surr_k$ and $\Surr_{k-1}$.
\begin{lemma}[\bfseries Relation between~$D_k$ and~$D_{k-1}$]\label{1.3}~\newline
   For all $k \geq 0$, for all $x$ and~$y$ in $\mathbb{R}^p$, 
   \begin{equation}\label{D=H}
      \Surr_k(x) - \Surr_k(y) = \Surr_{k-1}(x) - \Surr_{k-1}(y) - \mu \langle \barz_k - \barz_{k-1}, x-y \rangle.
   \end{equation}
\end{lemma}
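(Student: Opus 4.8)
The plan is to exploit the explicit canonical form of the lower bounds recorded in~(\ref{eq:dk2}). Recall that each $\surr_i^k$ is quadratic with the \emph{same} curvature, so that
$$ \surr_i^k(x) = c_i^{\prime k} - \mu \langle x, z_i^k \rangle + \frac{\mu}{2}\|x\|^2. $$
Summing over $i$, dividing by $n$, and adding the regularizer $\reg$ then gives a transparent decomposition of $\Surr_k$,
$$ \Surr_k(x) = \Big(\frac{1}{n}\sum_{i=1}^n c_i^{\prime k}\Big) - \mu \langle x, \barz_k \rangle + \frac{\mu}{2}\|x\|^2 + \reg(x), $$
where I have used the identity $\barz_k = \frac{1}{n}\sum_{i=1}^n z_i^k$ from~(\ref{eq:misoupdate}). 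The point of writing $\Surr_k$ in this form is that its dependence on~$k$ is entirely concentrated in the constant term and in the single linear term $-\mu\langle x, \barz_k\rangle$; the quadratic part $\frac{\mu}{2}\|x\|^2$ and the nonsmooth part $\reg$ do not depend on~$k$ at all.

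First I would form the single difference $\Surr_k(x) - \Surr_k(y)$. The $k$-dependent constant cancels immediately, leaving
$$ \Surr_k(x) - \Surr_k(y) = -\mu \langle x - y, \barz_k \rangle + \frac{\mu}{2}\big(\|x\|^2 - \|y\|^2\big) + \reg(x) - \reg(y). $$
Writing the identical expression with $k$ replaced by $k-1$ and subtracting, every term that is independent of~$k$---that is, the quadratic difference and the $\reg$ difference---cancels, and I am left with
$$ \big(\Surr_k(x) - \Surr_k(y)\big) - \big(\Surr_{k-1}(x) - \Surr_{k-1}(y)\big) = -\mu \langle x - y, \barz_k - \barz_{k-1} \rangle, $$
which is exactly the claimed identity~(\ref{D=H}) after rearranging.

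There is no genuine analytic obstacle here; the result is a bookkeeping identity. The only point that deserves care is the observation that makes the argument work: the quadratic coefficient attached to $\|x\|^2$ equals $\mu/2$ for \emph{every} index~$i$ and \emph{every} iteration~$k$, because each update in~(\ref{eq:dk}) is a convex combination of $\surr_i^{k-1}$ and a model function that both carry the curvature~$\mu$. Consequently the aggregated curvature of $\Surr_k$ is constant across iterations, the quadratic term is common to $\Surr_k$ and $\Surr_{k-1}$, and it drops out of the double difference; the same is true of $\reg$, which enters $\Surr_k$ and $\Surr_{k-1}$ identically. Once this is noted, the lemma follows from the canonical form alone, without invoking the specific value of $\delta$ or the minimization property of $x_k$.
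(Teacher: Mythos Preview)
Your proof is correct and follows essentially the same approach as the paper: both arguments rely on the fact that $\Surr_k$ has the form of a constant plus a fixed-curvature quadratic in $x$ plus $\reg(x)$, so that the double difference kills the quadratic and $\reg$ terms and isolates the linear shift governed by $\barz_k-\barz_{k-1}$. The only cosmetic difference is that the paper writes the quadratic in centered form $A_k + \tfrac{\mu}{2}\|x-\barz_k\|^2$ while you expand it as $-\mu\langle x,\barz_k\rangle + \tfrac{\mu}{2}\|x\|^2$ plus a constant; the computations are identical.
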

\begin{proof}
   Remember that the functions~$d_i^k$ are quadratic and have the form~(\ref{eq:dk2}), that~$D_k$ is defined in~(\ref{eq:Dk}), and that~$\barz_k$ minimizes $\frac{1}{n}\sum_{i=1}^n d_i^k$.
   Then, there exists a constant $A_k$ such that
   $$ \Surr_k(x) = A_k + \frac{\mu}{2} \Vert x-\barz_k \Vert^2 + \reg(x).$$
   This gives 
   \begin{equation}\label{1.3.1}
      \Surr_k(x) - \Surr_k(y) = \frac{\mu}{2} \Vert x-\barz_k \Vert^2 - \frac{\mu}{2} \Vert y-\barz_k \Vert^2 + \reg(x) - \reg(y). 
   \end{equation}
   Similarly,
   \begin{equation}\label{1.3.2}
      \Surr_{k-1}(x) - \Surr_{k-1}(y) = \frac{\mu}{2} \Vert x-\barz_{k-1} \Vert^2 - \frac{\mu}{2} \Vert y-\barz_{k-1} \Vert^2 + \reg(x) - \reg(y) .
   \end{equation}
   Subtracting (\ref{1.3.1}) and (\ref{1.3.2}) gives (\ref{D=H}). 
\end{proof}
Then, we are able to control the value of $\Surr_{k}(x_{k-1})$ in the
next lemma.
\begin{lemma}[\bfseries Controlling the value~$D_k(x_{k-1})$]\label{1.4}~\newline
   For any $k \geqslant 1$, 
   \begin{equation} \label{eq11}
      \Surr_{k}(x_{k-1}) - \Surr_{k} (x_{k})  \leqslant  \frac{\mu}{2} \Vert \barz_k - \barz_{k-1} \Vert^2.
   \end{equation}
\end{lemma}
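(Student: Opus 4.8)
The plan is to combine the relation between $\Surr_k$ and $\Surr_{k-1}$ established in Lemma~\ref{1.3} with the strong convexity of $\Surr_{k-1}$. First I would apply Lemma~\ref{1.3} with $x=x_{k-1}$ and $y=x_k$, which gives
$$\Surr_k(x_{k-1}) - \Surr_k(x_k) = \Surr_{k-1}(x_{k-1}) - \Surr_{k-1}(x_k) - \mu \langle \barz_k - \barz_{k-1}, x_{k-1} - x_k \rangle.$$
The key observation is that the quantity we want to bound is now re-expressed through differences of $\Surr_{k-1}$, whose minimizer is precisely $x_{k-1}$: recall from the proof of Lemma~\ref{1.3} that $\Surr_{k-1}(x) = A_{k-1} + \frac{\mu}{2}\Vert x-\barz_{k-1} \Vert^2 + \reg(x)$ is $\mu$-strongly convex and is minimized at $x_{k-1}=\prox_{\reg/\mu}[\barz_{k-1}]$.

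Next, rather than merely invoking optimality of $x_{k-1}$ (which would only yield $\Surr_{k-1}(x_{k-1}) - \Surr_{k-1}(x_k) \leq 0$), I would exploit the full $\mu$-strong convexity of $\Surr_{k-1}$ at its minimizer to obtain the sharper inequality
$$\Surr_{k-1}(x_{k-1}) - \Surr_{k-1}(x_k) \leq -\frac{\mu}{2}\Vert x_k - x_{k-1} \Vert^2.$$
Substituting this into the previous identity leaves
$$\Surr_k(x_{k-1}) - \Surr_k(x_k) \leq -\frac{\mu}{2}\Vert x_k - x_{k-1} \Vert^2 + \mu \langle \barz_k - \barz_{k-1}, x_k - x_{k-1} \rangle.$$

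Finally, writing $u = x_k - x_{k-1}$ and $v = \barz_k - \barz_{k-1}$, the right-hand side equals $-\frac{\mu}{2}\Vert u \Vert^2 + \mu\langle v, u\rangle = \frac{\mu}{2}\Vert v \Vert^2 - \frac{\mu}{2}\Vert u-v \Vert^2$, which is bounded above by $\frac{\mu}{2}\Vert v \Vert^2 = \frac{\mu}{2}\Vert \barz_k - \barz_{k-1} \Vert^2$, giving exactly~(\ref{eq11}). The crux of the argument — and the step I expect to be the main subtlety — is precisely the use of strong convexity instead of bare optimality in the second step: the extra $-\frac{\mu}{2}\Vert u \Vert^2$ term is what enables completing the square and recovering the sharp constant $\frac{1}{2}$. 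A cruder estimate, e.g. Cauchy--Schwarz combined with the non-expansiveness of the proximal operator ($\Vert u \Vert \leq \Vert v \Vert$), would only deliver the weaker bound $\mu\Vert \barz_k - \barz_{k-1} \Vert^2$, which would be insufficient for the subsequent convergence analysis.
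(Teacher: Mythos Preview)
Your proof is correct and follows essentially the same approach as the paper: apply Lemma~\ref{1.3} with $x=x_{k-1}$, $y=x_k$, use the $\mu$-strong convexity of $\Surr_{k-1}$ at its minimizer $x_{k-1}$, and then complete the square (the paper phrases this last step as the elementary inequality $\frac{1}{2}\|a\|^2+\langle a,b\rangle+\frac{1}{2}\|b\|^2\geq 0$). Your remark that strong convexity, not mere optimality, is what yields the sharp constant is exactly the point.
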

\begin{proof}
   Using Lemma~\ref{1.3} with $x = x_{k-1}$ and $y = x_{k}$ yields
   $$ \Surr_k(x_{k-1}) - \Surr_k (x_{k}) = \Surr_{k-1}(x_{k-1}) - \Surr_{k-1}(x_{k}) - \mu \langle \barz_k - \barz_{k-1}, x_{k-1} - x_{k} \rangle.  $$
   Moreover $x_{k-1}$ is the minimum of $\Surr_{k-1}$ which is $\mu$-strongly convex. Thus,
   $$ \Surr_{k-1}(x_{k-1}) + \frac{\mu}{2} \Vert x_k - x_{k-1} \Vert^2 \leqslant \Surr_{k-1}(x_k).$$
   Adding the two previous inequalities gives the first inequality below
   $$  \Surr_k(x_{k-1}) - \Surr_k (x_{k}) \leqslant  -\frac{\mu}{2} \Vert x_k - x_{k-1} \Vert^2- \mu \langle \barz_k - \barz_{k-1}, x_{k-1} - x_{k} \rangle \leqslant  \frac{\mu}{2} \Vert \barz_k - \barz_{k-1} \Vert^2,  $$
   and the last one comes from the basic inequality $\frac{1}{2}\|a\|^2 + \langle a,b\rangle + \frac{1}{2}\|b\|^2 \geq 0$.
\end{proof}

We have now all the inequalities in hand to prove  Theorem~\ref{thm1.1}.

\begin{proof}[Proof of Theorem~\ref{thm1.1}]~\newline
   We start by giving a lower bound of $\Surr_{k}(x_{k-1}) - \Surr_{k-1}(x_{k-1}) $. \\ 
   Take $x = x_{k-1}$ in (\ref{D=H}). Then, for all~$y$ in~$\Real^p$,  
   \begin{displaymath}
      \begin{split}
       \Surr_{k}(x_{k-1}) - \Surr_{k-1}(x_{k-1})  =& \, \Surr_{k}(y) - \Surr_{k-1}(y) +  \mu \langle \barz_{k}-\barz_{k-1}, y- x_{k-1} \rangle \\
      by \,\, (\ref{eq6})  \geqslant& - \frac{ \delta (L-\mu)}{2n} \Vert y - x_{k-1}\Vert^2 + \mu \langle \barz_{k}-\barz_{k-1}, y- x_{k-1} \rangle  
      \end{split}
   \end{displaymath}
   Choose $y$ that maximizes the above quadratic function, i.e.
   $$ y = x_{k-1} + \frac{n \mu}{\delta (L- \mu)} (\barz_k - \barz_{k-1}),  $$
   and then
   \begin{equation}
      \begin{split}
         \Surr_{k}(x_{k-1}) - \Surr_{k-1}(x_{k-1})    & \geqslant  \frac{n \mu^2}{2 \delta (L-\mu)} \Vert \barz_k - \barz_{k-1} \Vert^2  \\
         by \,\,(\ref{eq11}) & \geqslant \frac{n \mu}{ \delta (L-\mu)}  \left [ \Surr_{k}(x_{k-1}) - \Surr_k(x_k)  \right ] . \label{eq:misoconv2}
      \end{split}
   \end{equation}
   Then, we start introducing expected values. \\
   By construction 
   $$\Surr_{k} (x_{k-1}) = \Surr_{k-1} (x_{k-1}) + \frac{\delta}{n} (f_{i_k} (x_{k-1}) - \surr_{i_k}^{k-1} (x_{k-1})). $$
   After taking expectation, we obtain the relation 
   \begin{equation}\label{eq:misoconv}
      \mathbb{E}[ \Surr_k(x_{k-1})] = \left(1- \frac{\delta}{n}\right) \mathbb{E}[ \Surr_{k-1}(x_{k-1})] + \frac{\delta}{n} \mathbb{E} [F(x_{k-1})].
   \end{equation}
   We now introduce an important quantity
   $$\tau = \left(1- \frac{\delta (L-\mu)}{n \mu}\right) \frac{\delta}{n},$$
   and combine~(\ref{eq:misoconv2}) with~(\ref{eq:misoconv}) to obtain
    \begin{equation*} \label{eq13}
       \tau \mathbb{E} [F(x_{k-1})]  - \mathbb{E}[ \Surr_k(x_k)] \leqslant -(1-\tau) \mathbb{E}[\Surr_{k-1} (x_{k-1})].
   \end{equation*}
   We reformulate this relation as 
   \begin{equation} \label{eq14}
      \tau \left ( \mathbb{E} [F(x_{k-1})] -F^* \right ) + \left ( F^* - \mathbb{E}[ \Surr_k(x_k)] \right ) \leqslant (1-\tau) \left ( F^* -  \mathbb{E}[\Surr_{k-1} (x_{k-1})] \right ).
   \end{equation}
   On the one hand, since $ F(x_{k-1}) \geqslant F^*$, we have 
   $$   F^* - \mathbb{E}[ \Surr_k(x_k)]  \leqslant (1-\tau) \left ( F^* -  \mathbb{E}[\Surr_{k-1} (x_{k-1})] \right ).$$
   This is true for any $k \geqslant 1$, as a result 
   \begin{equation}\label{eq15}
      F^* - \mathbb{E}[ \Surr_k(x_k)]  \leqslant (1-\tau)^k \left ( F^* -  \Surr_{0} (x_{0}) \right ).
   \end{equation}
   On the other hand, since $F^* \geqslant \Surr_k(x_k)$, then 
   $$\tau \left ( \mathbb{E} [F(x_{k-1})] -F^* \right )\leqslant (1-\tau) \left ( F^* -  \mathbb{E}[\Surr_{k-1} (x_{k-1})] \right ) \leqslant (1-\tau)^k \left ( F^* -  \Surr_{0} (x_{0}) \right )
   ,$$ 
   which gives us the relation~(\ref{eq:thm_miso}) of the theorem.  We conclude
   giving the choice of $\delta$. We choose it to maximize the rate of
   convergence, which turns to maximize $\tau$.
   This is 
   a quadratic function, which is maximized at $\delta=\frac{n \mu}{2 (L-\mu)}$. However,  by definition $\delta \leqslant 1$. Therefore, the optimal choice of $\delta$ is given by 
   $$ \delta = \min \Big \{1, \frac{n \mu}{2 (L-\mu)}  \Big \}.$$
   Note now that
   \begin{enumerate}
      \item When $ \frac{n \mu}{2 (L-\mu)}  \leqslant 1$, we have $\delta = \frac{n \mu}{2 (L-\mu)}$ and $\tau = \frac{\mu}{4(L-\mu)}$. 
      \item When $ 1 \leqslant \frac{n \mu}{2 (L-\mu)}  $,  we have $\delta =1$ and $\tau =  \frac{1}{n}  - \frac{L-\mu}{n^2 \mu}  \geqslant \frac{1}{2n} $.
   \end{enumerate}
Therefore,~$\tau \geq \min\left(\frac{1}{2n},\frac{\mu}{4(L-\mu)}\right)$, which concludes the first part of the theorem.

To prove the second part, we use (\ref{eq15}) and (\ref{eq:thm_miso}), which gives
   \begin{align*}
      \mathbb{E}[F(x_{k}) - \Surr_k(x_k)]= \,\,& \mathbb{E}[F(x_{k})]-F^*+ F^* - \mathbb{E}[\Surr_k(x_k)]  \\
      \leqslant \,\,& \frac{1}{\tau} (1-\tau)^{k+1}( F^* -  \Surr_{0} (x_{0}) + (1-\tau)^k ( F^* -  \Surr_{0} (x_{0})) \\
      = \,\, & \frac{1}{\tau} (1-\tau)^k ( F^* -  \Surr_{0} (x_{0})).
   \end{align*}
\end{proof}

\subsection{Accelerating MISO-Prox}
The convergence rate of MISO (or also SDCA) requires a special handling since it does
not satisfy exactly the condition~(\ref{eq:rategk}) from
Proposition~\ref{prop:complexity}. The rate of convergence is linear, but with
a constant proportional to~$F^\star-D_0(x_0)$ instead of~$F(x_0)-F^\star$ for
many classical gradient-based approaches.

To achieve acceleration, we show in this section how to obtain similar guarantees as Proposition~\ref{prop:complexity} and~\ref{prop:complexity2}---that is, how to solve efficiently the subproblems~(\ref{eq:approx}). This essentially requires the right initialization each time MISO-Prox is called. By initialization, we mean initializing the variables~$z_i^0$.

Assume that MISO-Prox is used to obtain $x_{k-1}$
from~Algorithm~\ref{alg:catalyst} with $G_{k-1}(x_{k-1})-G_k^\star \leq
\varepsilon_{k-1}$, and that one wishes to use MISO-Prox again on~$G_k$ to
compute~$x_k$. Then, let us call $D'$ the lower-bound of~$G_{k-1}$ produced by MISO-Prox when computing~$x_{k-1}$ such that
$$ x_{k-1} = \argmin_{x \in \mathbb{R}^p} \left \{ D'(x)   = \frac{1}{n}\sum_{i=1}^n d'_i(x) + \psi(x) \right \}, $$
with 
$$  d_i'(x) = \frac{\mu+\kappa}{2} \Vert x - z_i' \Vert^2 + c_i'.$$
Note that we do not index these quantities with $k-1$ or~$k$ for the sake of simplicity.
The convergence of MISO-Prox may ensure that not only do we have
$G_{k-1}(x_{k-1})-G_k^\star \leq \varepsilon_{k-1}$, but in fact we have the stronger condition
$G_{k-1}(x_{k-1})-D'(x_{k-1}) \leq \varepsilon_{k-1}$.
Remember now that
$$ G_k(x) = G_{k-1} (x) + \frac{\kappa}{2} \Vert x - y_{k-1} \Vert^2 - \frac{\kappa}{2} \Vert x - y_{k-2} \Vert^2,$$
and that~$D'$ is a lower-bound of~$G_{k-1}$. Then, we may set for all~$i$ in~$\{1,\ldots,n\}$
$$ d_i^0(x) = d'_i(x) + \frac{\kappa}{2} \Vert x-y_{k-1} \Vert^2 - \frac{\kappa}{2} \Vert x -y_{k-2} \Vert^2,$$
which is equivalent to initializing the new instance of MISO-Prox with
$$ z_i^0 = z'_i + \frac{\kappa}{\kappa+\mu}(y_{k-1}-y_{k-2}),$$
and by choosing appropriate quantities~$c_i^0$.
Then, the following function is a lower bound of~$G_k$
$$
D_0(x) = \frac{1}{n} \sum_{i=1}^n d_i^0(x) + \psi(x).
$$
and the new instance of MISO-Prox to minimize $G_k$ and compute~$x_k$ will 
produce iterates, whose first point, which we
call~$x^0$, minimizes~$D_0$. This leads to the relation
$$
x^0 = \prox_{\psi/{(\kappa+\mu)}}\left[\barz^0\right] = \prox_{\psi/{(\kappa+\mu)}}\left[\barz' + \frac{\kappa}{\kappa+\mu}(y_{k-1}-y_{k-2})\right],
$$
where we use the notation $\barz^0 = \frac{1}{n}\sum_{i=1}^n{z^0_i}$ and $\barz' = \frac{1}{n}\sum_{i=1}^n{z'_i}$ as in Algorithm~\ref{alg:miso}.

Then, it remains to show that the quantity~$G_k^\star - D_0(x^0)$ is upper
bounded in a similar fashion as $G_k(x_{k-1})-G_k^\star$ in Propositions~\ref{prop:complexity} and~\ref{prop:complexity2} 
to obtain a similar result for MISO-Prox and control the number of inner-iterations.
This is indeed the case, as stated in the next lemma.
\begin{lemma}[\bfseries Controlling $G_k(x_{k-1})-G_k^\star$ for MISO-Prox]~\label{lemma:accproxmiso}\newline
   When initializing MISO-Prox as described above, we have
   \begin{displaymath}
      G_k^* -  D_{0}(x^{0}) \leqslant \varepsilon_{k-1} +  \frac{\kappa^2}{2 (\kappa+\mu)} \Vert y_{k-1} - y_{k-2} \Vert^2.
   \end{displaymath}
\end{lemma}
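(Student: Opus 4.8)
The plan is to exploit that $G_k$ and $D_0$ differ by \emph{exactly} the same affine correction that relates $G_{k-1}$ and $D'$, so that the gap $G_k-D_0$ coincides pointwise with the already-controlled gap $G_{k-1}-D'$. Concretely, since $G_k(x)=G_{k-1}(x)+\tfrac{\kappa}{2}\|x-y_{k-1}\|^2-\tfrac{\kappa}{2}\|x-y_{k-2}\|^2$ and $d_i^0(x)=d_i'(x)+\tfrac{\kappa}{2}\|x-y_{k-1}\|^2-\tfrac{\kappa}{2}\|x-y_{k-2}\|^2$, summing over $i$ and adding $\psi$ gives the identity $G_k(x)-D_0(x)=G_{k-1}(x)-D'(x)$ for every $x$. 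First I would evaluate this identity at $x=x_{k-1}$ and combine $G_k^\star\leq G_k(x_{k-1})$ with the stronger stopping guarantee $G_{k-1}(x_{k-1})-D'(x_{k-1})\leq\varepsilon_{k-1}$ stated above the lemma, to obtain
\[
G_k^\star-D_0(x^0)\leq\big(D_0(x_{k-1})-D_0(x^0)\big)+\varepsilon_{k-1}.
\]
This reduces the whole statement to bounding the single term $D_0(x_{k-1})-D_0(x^0)$.

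The only genuine estimate needed is the control of $D_0(x_{k-1})-D_0(x^0)$. Here I would write $D_0=D'+\ell$, where $\ell(x)=\tfrac{\kappa}{2}\|x-y_{k-1}\|^2-\tfrac{\kappa}{2}\|x-y_{k-2}\|^2=\kappa\langle x,\,y_{k-2}-y_{k-1}\rangle+\text{const}$ is \emph{affine}, and use that $x_{k-1}$ minimizes the $(\mu+\kappa)$-strongly convex function $D'$. Strong convexity yields $D'(x_{k-1})-D'(x^0)\leq-\tfrac{\mu+\kappa}{2}\|x_{k-1}-x^0\|^2$, so that, writing $u=x_{k-1}-x^0$ and $v=y_{k-1}-y_{k-2}$,
\[
D_0(x_{k-1})-D_0(x^0)\leq-\tfrac{\mu+\kappa}{2}\|u\|^2-\kappa\langle u,v\rangle.
\]
Maximizing the right-hand side over $u$ (equivalently, completing the square, the maximizer being $u=-\tfrac{\kappa}{\mu+\kappa}v$) gives precisely $\tfrac{\kappa^2}{2(\mu+\kappa)}\|v\|^2$, which combined with the previous display produces the claimed bound.

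The main subtlety I expect is getting the constant exactly right. A naive route---bounding $\|x_{k-1}-x^0\|$ by non-expansiveness of the proximal operator (giving $\|u\|\leq\tfrac{\kappa}{\mu+\kappa}\|v\|$) and then applying Cauchy--Schwarz to $\kappa\langle u,v\rangle$---loses a factor of two and yields only $\tfrac{\kappa^2}{\mu+\kappa}\|v\|^2$ instead of the stated $\tfrac{\kappa^2}{2(\mu+\kappa)}\|v\|^2$. The tighter constant comes from \emph{retaining} the negative quadratic $-\tfrac{\mu+\kappa}{2}\|u\|^2$ supplied by strong convexity of $D'$ and optimizing the full quadratic in $u$, rather than discarding it. I would therefore take care to verify the affine reduction $D_0=D'+\ell$---checking that the $y$-dependent quadratics cancel so that only the linear cross term $\kappa\langle u,v\rangle$ survives---and to carry the strong-convexity term through to the end.
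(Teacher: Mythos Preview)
Your proposal is correct and is essentially the same argument as the paper's proof. Both use the pointwise identity $G_k-D_0=G_{k-1}-D'$ (evaluated at $x_{k-1}$ to pull in $\varepsilon_{k-1}$), invoke $(\mu+\kappa)$-strong convexity of $D'$ at its minimizer $x_{k-1}$ to produce the term $-\tfrac{\mu+\kappa}{2}\|x_{k-1}-x^0\|^2-\kappa\langle x_{k-1}-x^0,\,y_{k-1}-y_{k-2}\rangle$, and then complete the square (the paper phrases this as Young's inequality, you phrase it as maximizing over $u$) to obtain the sharp constant $\tfrac{\kappa^2}{2(\kappa+\mu)}$.
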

\begin{proof}
By strong convexity, we have
$$ D_0(x^0) + \frac{\kappa}{2} \Vert x^0-y_{k-2} \Vert^2 - \frac{\kappa}{2} \Vert x^0 -y_{k-1} \Vert^2 =D'_0(x^0) \geqslant  D'_0(x_{k-1}) + \frac{\kappa+\mu}{2} \Vert x^0 - x_{k-1} \Vert^2. $$
Consequently, 
\begin{align*}
   D_0(x^0) \geqslant \,\,& D'(x_{k-1}) - \frac{\kappa}{2} \Vert x^0-y_{k-2} \Vert^2 + \frac{\kappa}{2} \Vert x^0 -y_{k-1} \Vert^2 + \frac{\kappa+\mu}{2} \Vert x^0 - x_{k-1} \Vert^2\\
   = \,\, & D_0(x_{k-1})+  \frac{\kappa}{2} \Vert x_{k-1}-y_{k-2} \Vert^2 - \frac{\kappa}{2} \Vert x_{k-1} -y_{k-1} \Vert^2  - \frac{\kappa}{2} \Vert x^0-y_{k-2} \Vert^2 + \frac{\kappa}{2} \Vert x^0 -y_{k-1} \Vert^2 \\
                        & + \frac{\kappa+\mu}{2} \Vert x^0 - x_{k-1} \Vert^2\\
   =\,\,& D_0(x_{k-1}) - \kappa \langle x^0 - x_{k-1}, y_{k-1} - y_{k-2} \rangle + \frac{\kappa+\mu}{2} \Vert x^0 - x_{k-1} \Vert^2 \\
   \geqslant \,\, &  D_0(x_{k-1}) - \frac{\kappa^2}{2 (\kappa+\mu)} \Vert y_{k-1} - y_{k-2} \Vert^2,
\end{align*}
where the last inequality is using a simple relation $\frac{1}{2}\|a\|^2+2 \langle a,b\rangle +\frac{1}{2}\|b\|^2 \geq 0$.
As a result,
\begin{eqnarray*}
   G_k^* -  D_{0}(x^{0}) &\leqslant & G_k^* -  D_{0}(x_{k-1}) +  \frac{\kappa^2}{2 (\kappa+\mu)} \Vert y_{k-1} - y_{k-2} \Vert^2 \\ 
                                                     &\leqslant& G_k(x_{k-1}) -  D_{0}(x_{k-1}) +  \frac{\kappa^2}{2 (\kappa+\mu)} \Vert y_{k-1} - y_{k-2} \Vert^2 \\
                                                                                 &=& G_{k-1}(x_{k-1}) -  D'(x_{k-1})+  \frac{\kappa^2}{2 (\kappa+\mu)} \Vert y_{k-1} - y_{k-2} \Vert^2  \\
                                                                                                             &\leq & \varepsilon_{k-1} +  \frac{\kappa^2}{2 (\kappa+\mu)} \Vert y_{k-1} - y_{k-2} \Vert^2 
\end{eqnarray*}
\end{proof}
{ 
We remark that this bound is half of the bound shown in (\ref{bound}). Hence, a similar argument gives the bound on the number of inner iterations. 
We may finally compute the iteration-complexity of accelerated MISO-Prox.}
\begin{prop}[Iteration-Complexity of Accelerated MISO-Prox]~\newline
   When~$F$ is~$\mu$-strongly convex, the accelerated MISO-Prox algorithm achieves the accuracy~$\varepsilon$ with an expected number of iteration upper bounded by
$$ O \left ( \min \left \{  \frac{L}{\mu}, \sqrt{ \frac{ nL}{ \mu} } \right \} \log \left ( \frac{1}{\epsilon}  \right ) \log \left ( \frac{L}{\mu} \right ) \right ).  $$
\end{prop}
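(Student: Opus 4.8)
The plan is to follow the three-step template of Appendix~\ref{sec:accel-factor-calculs}: bound the total work as $\text{Comp} \leq k_{\text{in}}\,k_{\text{out}}$, where $k_{\text{out}}$ is the number of outer (catalyst) iterations and $k_{\text{in}}$ the number of MISO-Prox steps per subproblem, and then choose $\kappa$ to minimize the product. For the outer loop I would apply Theorem~\ref{convergence} directly: since $F(x_k)-F^\star \leq C(1-\rho)^{k+1}(F(x_0)-F^\star)$ with $\rho = \Theta(\sqrt{q})$ and $q=\mu/(\mu+\kappa)$, reaching accuracy $\varepsilon$ needs
\[
k_{\text{out}} = O\!\left(\tfrac{1}{\sqrt q}\log\tfrac{1}{\varepsilon}\right) = O\!\left(\sqrt{\tfrac{\mu+\kappa}{\mu}}\,\log\tfrac{1}{\varepsilon}\right).
\]

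The delicate part is $k_{\text{in}}$, because MISO-Prox does not satisfy the clean rate~(\ref{eq:rategk}) assumed in Proposition~\ref{prop:complexity}: by Theorem~\ref{thm1.1} its linear rate for the subproblem $G_k$ carries the constant $G_k^\star - D_0(x^0)$ (and a $1/\tau$ prefactor) rather than $A(G_k(z_0)-G_k^\star)$. I would therefore use the dedicated warm-start of Section~\ref{appendix:miso}, which initializes the $z_i$'s so that the first point $x^0$ minimizes a lower bound $D_0$ of $G_k$, and invoke Lemma~\ref{lemma:accproxmiso} to obtain $G_k^\star - D_0(x^0) \leq \varepsilon_{k-1} + \tfrac{\kappa^2}{2(\kappa+\mu)}\Vert y_{k-1}-y_{k-2}\Vert^2$. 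Exactly as in the proof of Proposition~\ref{prop:complexity}, I would then control $\Vert y_{k-1}-y_{k-2}\Vert^2$ by Lemma~\ref{6.13} and use the geometric choice of $\varepsilon_k$ to bound the ratio $(G_k^\star - D_0(x^0))/\varepsilon_k$ by a universal constant times a factor polynomial in $\kappa/\mu$. Plugging this into the MISO-Prox rate of Theorem~\ref{thm1.1}, the number of inner steps is
\[
k_{\text{in}} = \Otilde\!\left(\tfrac{1}{\tau_{\mtd,G_k}}\right), \qquad \tau_{\mtd,G_k} = \min\!\left\{\tfrac{1}{2n},\ \tfrac{\mu+\kappa}{4(L-\mu)}\right\},
\]
where $G_k$ has strong-convexity parameter $\mu+\kappa$ and smoothness $L+\kappa$, and the hidden logarithmic factor is $O(\log(L/\mu))$.

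It then remains to optimize $\kappa$, i.e.\ to maximize $\tau_{\mtd,G_k}/\sqrt{\mu+\kappa}$ as prescribed in Appendix~\ref{sec:accel-factor-calculs}. Writing $s=\mu+\kappa$, the product $k_{\text{in}}k_{\text{out}}$ is proportional to $\sqrt{s/\mu}\,\max\{2n,\,4(L-\mu)/s\}$, which is decreasing then increasing in $s$; its minimum is therefore attained at the breakpoint $s = 2(L-\mu)/n$ whenever this value is $\geq \mu$ (the ``ill-conditioned'' regime $n \lesssim L/\mu$). There $\tau_{\mtd,G_k}=1/2n$ and the product equals $O(\sqrt{nL/\mu})$; otherwise the optimum is $\kappa=0$ (plain MISO-Prox), giving $O(L/\mu)$. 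Combining the two cases yields the $\min\{L/\mu,\sqrt{nL/\mu}\}$ factor, and reinserting the $\log(1/\varepsilon)$ from $k_{\text{out}}$ and the $\log(L/\mu)$ from $k_{\text{in}}$ gives the claimed complexity.

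The main obstacle is precisely this inner-loop bound: the nonstandard form of the MISO-Prox guarantee forbids a black-box use of Proposition~\ref{prop:complexity}, so one must replace it by the explicit warm-start construction together with Lemma~\ref{lemma:accproxmiso}, and verify that the resulting initial gap $G_k^\star-D_0(x^0)$ shrinks geometrically at the same rate as $\varepsilon_k$, so that only $\Otilde(1/\tau_{\mtd,G_k})$ inner steps are needed at every outer iteration.
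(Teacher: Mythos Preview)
Your proposal is correct and follows essentially the same route as the paper: both split into the regimes $n\gtrless 2(L-\mu)/\mu$, replace Proposition~\ref{prop:complexity} by the warm-start construction together with Lemma~\ref{lemma:accproxmiso} (and Lemma~\ref{6.13}) to bound the inner-loop initial gap $G_k^\star-D_0(x^0)$, and then take $\kappa$ at the breakpoint $\mu+\kappa=2(L-\mu)/n$ so that $\tau_{\mtd,G_k}=1/(2n)$. The only cosmetic difference is that the paper states this $\kappa$ and directly checks $k_{\text{in}}=O(n\log(L/\mu))$, $k_{\text{out}}=O(\sqrt{L/(n\mu)}\log(1/\varepsilon))$, whereas you derive it by explicitly optimizing $\sqrt{s/\mu}\max\{2n,4(L-\mu)/s\}$.
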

\begin{proof}
{ 
   When $n >2(L-\mu)/\mu$, there is no acceleration. The optimal value for~$\kappa$ is zero, and we may use Theorem~\ref{thm1.1} and Lemma~\ref{lemma:expectation} to obtain the
   complexity
$$ O \left (\frac{L}{\mu} \log \left (  \frac{L}{\mu}\frac{F(x_0)-D_0(x_0)}{\epsilon}  \right ) \right). $$
When $n < 2(L-\mu)/\mu$, there is an acceleration, with $\kappa = 2(L-\mu)/\mu \; - \mu$. Let us compute the global complexity using the ``template'' presented in Appendix~\ref{sec:accel-factor-calculs}. }The number of outer iteration is given by 
$$ k_{\text{out}} = O \left ( \sqrt{ \frac{ L}{n \mu} } \log \left ( \frac{F(x_0)- F^*}{\epsilon} \right )  \right ).  $$
At each inner iteration, we initialize with the value $x^0$ described above, and we use Lemma~\ref{lemma:accproxmiso}:
$$ G_k^* - D_0(x^0) \leq \epsilon_{k-1}+ \frac{\kappa}{2} \Vert y_{k-1} - y_{k-2} \Vert^2.  $$
Then,
$$ \frac{G_k^* - D_0(x^0)}{\epsilon_k} \leq \frac{R}{2}, $$
where 
$$ R = \frac{2}{1-\rho}+ \frac{2592 \kappa}{\mu (1- \rho)^2 (\sqrt{q}-\rho)^2} = O\left (  \left (\frac{L}{n\mu} \right )^2 \right). $$
With Miso-Prox, with have $\tau_{G_k} = \frac{1}{2n}$, thus the expected number of inner iteration is given by Lemma~\ref{lemma:expectation}:
{ $$ k_{\text{in}} = O(n \log(n^2 R)) = O \left ( n \log \left ( \frac{L}{\mu} \right ) \right ).  $$}
As a result,
$$  \text{Comp} = O \left ( \sqrt{ \frac{ nL}{ \mu} } \log \left ( \frac{F(x_0)- F^*}{\epsilon}  \right ) \log \left ( \frac{L}{\mu} \right ) \right ). $$
To conclude, the complexity of the accelerated algorithm is given by
$$ O \left ( \min \left \{  \frac{L}{\mu}, \sqrt{ \frac{ nL}{ \mu} } \right \} \log \left ( \frac{1}{\epsilon}  \right ) \log \left ( \frac{L}{\mu} \right ) \right ).  $$
\end{proof}

\section{Implementation Details of Experiments}\label{appendix:exp}
{In the experimental section, we compare the performance with and without acceleration for three algorithms SAG, SAGA and MISO-Prox on $l_2$-logistic regression problem. In this part, we clarify some details about the implementation of the experiments. 

Firstly, we normalize the observed data before running the regression. Then we apply Catalyst using parameters according to the theoretical settings. Standard analysis of the logistic function shows that the Lipschitz gradient parameter $L$ is $1/4$ and strongly convex parameter $\mu=0$ when there is no regularization. Adding properly a $l_2$ term generates the strongly-convex regimes. Several parameters need to be fixed at the beginning stage. The parameter $\kappa$ is set to {its optimal value suggested by theory}, which only depends on $n$, $\mu$ and $L$. More precisely, $\kappa$ writes as $\kappa  = {a(L-\mu)}/{(n+b)} -\mu$, with $(a,b) = (2, -2)$ for SAG, $(a,b) = (1/2, 1/2)$ for SAGA and $(a,b) =(1,1)$ for MISO-Prox. The parameter $\alpha_0$ is initialized as the positive solution of $x^2+ (1-q) x -1 =0$ where $q= \sqrt{\mu/ (\mu+\kappa)}$. Furthermore, since the objective function is always positive, $F(x_0) -F^*$ can be upper bounded by $F(x_0)$ which allow us to set the $\varepsilon_k = (2/9) F(x_0)(1-\rho)^k$ in the strongly convex case and $\varepsilon_k = 2F(x_0)/9(k+2)^{4+\eta}$ in the non-strongly convex case. 
{
Finally, we set the free parameter in the expression of $\varepsilon_k$ as follows. We simply set $\rho = 0.9\sqrt{q}$ in the strongly convex case and $\eta= 0.1$ in the non strongly convex case.}

To solve the subproblem at each iteration, the step-sizes parameter for SAG, SAGA and MISO are set to {the values suggested by theory}, which only depend on $\mu$, $L$ and $\kappa$. {All of the methods we compare store~$n$ gradients evaluated at previous iterates of the algorithm. For MISO, the convergence analysis in Appendix~\ref{appendix:miso} leads to the initialization $x_{k-1}+\frac{\kappa}{\mu+\kappa}(y_{k-1}-y_{k-2})$ that moves $x_{k-1}$ closer to $y_{k-1}$ and further away from~$y_{k-2}$. We found that using this initial point for SAGA was giving slightly better results than~$x_{k-1}$.}

}

\end{document}